\newtheorem{theorem}{Theorem}[section]
\newtheorem{lemma}[theorem]{Lemma}
\newtheorem{remark}[theorem]{Remark}
\newtheorem{proposition}[theorem]{Propositon}
\newtheorem{definition}[theorem]{Definition}
\newtheorem{corollary}[theorem]{Corollary}
\numberwithin{equation}{section}
\def\email#1{Email: #1}
\def\emails#1{Emails: #1}
\def\keywords#1{
    \par\medskip
    \noindent\textbf{Keywords.} #1
}
\def\subjclass#1{{
    \renewcommand{\thefootnote}{}%
    \footnote{
        \emph{Mathematics Subject Classification (2020):} #1
    }
}}
\title{Periodic propagation of singularities for heat equations with  time delay}
\author[1]{
Gengsheng Wang
}
\author[2,3]{
Huaiqiang Yu
}
\author[1,4]{
Yubiao Zhang
}
\affil[1]{
Center for Applied Mathematics and KL-AAGDM,
Tianjin University, 300072 Tianjin,  China.
\emails{
    \texttt{wanggs62@yeah.net},
    \texttt{yubiao\_zhang@yeah.net}
}
}
\affil[2]{
School of Mathematics and KL-AAGDM,
Tianjin University,
300350 Tianjin,  China.
\email{\texttt{huaiqiangyu@yeah.net}
}
}
\affil[3]{
Radon Institute for Computational and Applied Mathematics,
Austrian Academy of Sciences,
Altenbergerstraße 69, Linz, 4040, Upper Austria, Austria.
}
\affil[4]{
Chair for Dynamics, Control, Machine Learning  and Numerics,
Department of Mathematics,
Friedrich-Alexander-Universit\"{a}t Erlangen-N\"{u}rnberg,
91058 Erlangen, Germany.
}
\date{}
\begin{document}
\selectlanguage{english}
\maketitle

\begin{abstract}

This paper presents two remarkable phenomena associated with the heat equation with a time delay: namely, the propagation of singularities and periodicity. These are manifested through a distinctive mode of propagation of singularities in the solutions. Precisely, the singularities of the solutions propagate periodically in a bidirectional fashion along the time axis. Furthermore, this propagation occurs in a stepwise manner. More specifically, when propagating in the positive time direction, the order of the joint derivatives of the solution increases by 2 for each period; conversely, when propagating in the reverse time direction, the order of the joint derivatives decreases by 2 per period. Additionally, we elucidate the way in which the initial data and historical values impact such a propagation of singularities.

The phenomena we have discerned not only corroborate the pronounced differences between heat equations with and without time delay but also vividly illustrate the substantial divergence between the heat equation with a time delay and the wave equation, especially when viewed from the point of view of singularity propagation.

\end{abstract}

\keywords{
    Heat equations with time delay,
    propagation of singularities, periodicity, discreteness.
}

\subjclass{
    35K05 
    45K05 
    58J47 
}

\section{Introduction}

\subsection{Equation under study}
Let $\mathbb{R}^+:=(0,+\infty)$ and $\mathbb N^+:=\{1,2,\ldots\}$.
Let $\Omega \subset \mathbb R^n$    ($n\in \mathbb N^+$)     be a bounded domain with a $C^2$-boundary $\partial\Omega$.
This paper studies  the following heat equation with a time delay:
\begin{equation}\label{20220510-yb-OriginalSystemWithTimeDelay}
	\begin{cases}
	 	\partial_t  y(t,x) = \Delta y(t,x) + a y(t-\tau,x)    &(t,x)\in\mathbb{R}^+\times\Omega,
	 	\\
	 	y(t,x)=0  &(t,x)\in\mathbb{R}^+\times\partial\Omega, \\
	 	y(0,x)  =  y_0(x)  &\,~~~~~~x\in \Omega,\\
	 	y(t,x) = \phi(t,x) &(t,x)\in(-\tau,0)\times\Omega,
	\end{cases}
\end{equation}
    where   $\tau>0$
    and $a\in \mathbb R \setminus\{0\}$ are given constants, and
    $y_0\in L^2(\Omega)$ and $\phi \in L^2( (-\tau,0) \times \Omega )$ are the initial value and the historical value, respectively.

\subsection{History, motivation and aim}
Differential equations with time delay constitute an important subject in the domain of differential equations. Numerous natural phenomena emerging from physics, chemistry, biology, economics, and other fields hinge not merely on the current state but also on certain past events (see, for example, \cite{Batkai-Piazzera-2005, Diekmann-GVW-Springer-1995, Erneux-2009, Hale-Verduyn-Springer-1993, Kuang-1993, Wu-1996} and the references therein). Owing to the aforementioned reasons, research on differential equations with time delay boasts a considerably long history.

To the best of our knowledge, the related research originated from the study of the well-posedness of linear ordinary differential equations with delays during the 1960s and 1970s. Its foundation lies in the state space theory (also known as the so-called semigroup theory, or the evolution equation approach) that was developed in works such as \cite{Bernier-Manitius-1978, Burns-Herdman-Stech-1983, Delfour-1977, Delfour-Manitius-1980, Manitius-1980}. Subsequently, this theory was promptly extended to infinite-dimensional scenarios, particularly to partial differential equations with delays. For the latter, we refer the reader to \cite{Batkai-Piazzera-2005, Kunisch-Schappacher-1980, Kunisch-Schappacher-1983, Nakagiri-1981, Nakagiri-1987, Nakagiri-1988, Pinto-Poblete-Sepulveda, Vinter-1978} and the references therein.

The state space theory is convenient for analyzing the global regularity and dynamic behavior of equations with delay because this theory can transform equations with delay into a system of linear evolution equations without delay. Then, we can apply the theory of a one-parameter semigroup to obtain many useful results for the equations with delay.


 The motivation for this research is as follows.
\begin{itemize}[leftmargin=4em]

\item[(m1)] We can recognize that equation \eqref{20220510-yb-OriginalSystemWithTimeDelay} may have some kind of  propagation of singularity from the following three aspects.

First,
 we consider  the following ODE with time-delay:
    \begin{align*}
        y'(t) = y(t-1), ~ t>0;~~ y(0) \in \mathbb R,
        ~ y|_{(-1,0)} \in L^2((-1, 0)).
    \end{align*}
One can easily find that  a solution $y(t)$ to the above equation  is not smooth  over $(0,1)$, provided that   the historical value $y|_{(-1,0)}$ is not smooth either.

Second, employing the state space method, we are able to rewrite equation \eqref{20220510-yb-OriginalSystemWithTimeDelay} as the subsequent autonomous system which consists of a heat equation coupled with a transport equation:
    \begin{align}\label{20250203-HybridSystemOfHeatAndTransport}
        \left\{\begin{array}{ll}
            \partial_t y(t)  -  \Delta_D y(t) = a z(t,1),~  t>0,\\
          \partial_t z(t,s) + \partial_s z(t,s) = 0, ~t>0,~s\in(0,\tau),\\
          z(t,0)  = y(t) ~\text{in}~  L^2(\Omega), ~ t>0, \\
        y(0) = y_0 ~\text{and}~
        z(0,s) = \phi(-s) ~\text{in}~  L^2(\Omega), ~s\in(0,\tau),
        \end{array}
        \right.
    \end{align}
    where $\Delta_D$ is the Laplacian operator with the homogeneous Dirichlet boundary condition (see \eqref{20241224-yb-LaplacianWithDiricihletBoundaryCondition}). However,
the transport equation has the propagation of singularities along the characteristic lines.

Third, it has been widely recognized for a long time that heat equations with integral-type memory exhibit certain wavelike characteristics (see, for instance, \cite{Cattaneo-1958, Gurtin-Pipkin-1968} and the references therein). Notably, recently in \cite{Wang-Zhang-Zuazua-2022}, the authors demonstrated that the heat equation with integral type memory, when equipped with smooth kernels (in the zero-order term of the equation), features the propagation of singularities at zero velocity. Nevertheless, equation \eqref{20220510-yb-OriginalSystemWithTimeDelay} can be construed as a heat equation with integral memory, in which the memory kernel takes the form of a Dirac function. Consequently, investigating the propagation of singularities for equation \eqref{20220510-yb-OriginalSystemWithTimeDelay} is of considerable significance.

The above considerations prompt us to explore the following issues: Firstly, what sort of singularity propagation does equation \eqref{20220510-yb-OriginalSystemWithTimeDelay} exhibit? Secondly, in what ways do the historical values  and the initial data  influence the singularity propagation of equation \eqref{20220510-yb-OriginalSystemWithTimeDelay}?

    \item[(m2)]

   Although extensive research has been conducted on partial differential equations with time delay, in comparison to general partial differential equations, our comprehension of those with time delay remains considerably restricted. Particularly, to the best of our knowledge, quantitative investigations regarding the propagation of singularities for equation \eqref{20220510-yb-OriginalSystemWithTimeDelay} are virtually nonexistent. Given that the propagation of singularities is a characteristic typically associated with hyperbolic equations (specifically, the wave equation and the transport equation), its manifestation in the heat equation with delay is, in general, even more deserving of scrutiny.

    \item[(m3)] The propagation of singularities is intrinsically of great significance. This can be expounded upon from several perspectives as follows.
Firstly, the exploration of the propagation of singularities represents an essential subject matter (even one of the central themes) within the realm of the theory of partial differential equations/operators. Phenomena like unique continuation, as well as the propagation of wavefront sets. 
Secondly, in a multitude of specific application scenarios, if our objective is to attain a comprehensive solution, we will invariably have to delve into certain facets of the propagation of singularities for the corresponding equations. For instance, this holds true when handling the controllability or observability of linear control systems (see \cite{Bardos-Lebeau-Rauch-1992, Nakagiri-Yamamoto-1989, WZZ-2021}, and the references therein).


\end{itemize}

   The purpose of this paper is as follows.
\begin{itemize}[leftmargin=4em]
 \item[(p1)] We are going to disclose novel phenomena pertaining to equation \eqref{20220510-yb-OriginalSystemWithTimeDelay}: namely, the propagation of singularities and periodicity. These are manifested through a distinctive propagation of singularities for the solutions and vividly illustrate the differences and similarities between equation \eqref{20220510-yb-OriginalSystemWithTimeDelay} and the heat/wave equations.

    \item[(p2)] We demonstrate that the singularities of the solutions propagate periodically in a bidirectional manner along the time axis. Moreover, this propagation occurs in a stepwise fashion. When propagating in the positive time direction, the order of the joint derivative of the solution increases by 2 for each period; conversely, when propagating in the reverse time direction, the order of the joint derivative decreases by 2 per period. (Refer to Definition \ref{definition1.1-1-6}.)

   \item[(p3)] We will clarify how the initial data and historical values collectively affect the propagation of the aforementioned singularities.
\end{itemize}

\subsection{Notation, subspaces and concepts}

We begin with the following notation that will be used frequently in this paper: Let $\mathbb{N}:=\{0,1,2,\ldots\}$ and $t^+:= \max\{t,0\}$. We write $\chi_K$ for the characteristic function of a set $K$, $\mathcal{L}(E,F)$ for the space of all linear bounded operators from a Banach space $E$ to another Banach space $F$, $\mathcal{L}(E)$ for the space $\mathcal{L}(E,E)$, and  $\mathcal{D}'(U)$ for the space of all distributions over a nonempty open subset $U$ of $\mathbb{R}^n$.

Now we introduce several spaces of functions that will be frequently used in our studies.
Given $s\in\mathbb{R}$ and a nonempty open subset $ U \subset \mathbb R^n$, we let $ H^s_{loc}(U)$ be the space of all distributions $f$ over $U$ such that $\widetilde{\rho f} \in H^s(\mathbb R^n)$ (with $\widetilde{\rho f}$ the zero extension of $\rho f$ to $\mathbb R^n$) for all $\rho \in C_0^{\infty}(U)$. Given
numbers $r,s\in\mathbb{N}$ and nonempty open subsets $I\subset \mathbb R $ and
$U \subset  \mathbb R^n$, we denote by $ H^{r,s}(I\times U) $ and $ H^{r,s}_{loc}(I\times U) $ the space $ H^r(I; H^s(U) )$ and the space of all functions $f$ for which $\rho f \in H^{r,s}(I\times U)$ for all $\rho \in C_0^{\infty}(I \times U)$, respectively.  Given $(t,x)\in \mathbb{R}\times \mathbb R^n$ and $r,s,p\in\mathbb{N}$, we define the following spaces:
\begin{equation}\label{yu-10-15-1}
H_{loc}^s(x) := \Big\{
    \text{all functions in}~    H^s_{loc}(U)
     ~\text{for some open neighborhood}~       U  ~\text{of}~       x          \Big\};
\end{equation}
\begin{eqnarray}\label{yu-10-15-2}
    H_{loc}^{r,s}(t,x):= \Big\{
        \text{all functions in}~    H^{r,s}_{loc}(I \times U)
         ~\text{for some open neighborhood}~      I \times  U  ~\text{of}~       (t,x)
    \Big\};
   \end{eqnarray}
\begin{align}\label{20241218-yb-JointLocalSobolevSpace}
     \widehat{H}^{(r,s),p}_{loc}(t,x) :=
    \bigcap_{\alpha,\beta\in\mathbb{N},\,2\alpha+\beta= p}
    H^{r+\alpha, s+\beta}_{loc}(t,x);
\end{align}
and
\begin{align}\label{yu-10-15-3}
\begin{array}{ll}
    H^{r,s}_{loc,+}(t,x) :=& \Big\{
        \text{all functions}~  f ~\text{in}~  H^{r}( (t, t+\delta); H^s(U) )
        \\
        &\quad\quad\quad
         ~\text{for some}~      \delta > 0     ~\text{and open neighborhood}~       U  ~\text{of}~       x
           \Big\};
\\
    H^{r,s}_{loc,-}(t,x) :=&  \Big\{
        \text{all functions}~  f ~\text{in}~
        H^{r}( (t-\delta, t); H^s(U) )
        \\
        &\quad\quad\quad
        ~\text{for some}~      \delta > 0
        ~\text{and open neighborhood}~
        U  ~\text{of}~       x
            \Big\}.
\end{array}
\end{align}

Next, we will introduce the following concepts that will be used to describe our main results.
\begin{definition}\label{definition1.1-1-6}
   The number $p$ in \eqref{20241218-yb-JointLocalSobolevSpace}
is referred to as the order of the joint derivatives of a function $g \in \widehat{H}^{(r,s),p}_{loc}(t,x)$ at the point $(t,x)$.
\end{definition}


Finally, due to the needs of our research,  we define
the solution of equation \eqref{20220510-yb-OriginalSystemWithTimeDelay}
in the following way:

\begin{definition}
Given $y_0\in L^2(\Omega)$ and $\phi \in L^2( (-\tau,0) \times \Omega )$,
    the solution of equation \eqref{20220510-yb-OriginalSystemWithTimeDelay}
    is  a function
$y: (-\tau,+\infty)\times \Omega\rightarrow  \mathbb{R}$ such that
\begin{itemize}[leftmargin=4em]
    \item [$(i)$] $ y|_{[0,+\infty)\times \Omega} \in C([0,+\infty); L^2(\Omega))
        \cap  L^2_{loc}([0,+\infty); H_0^1(\Omega))$;
  \item [$(ii)$] $y|_{(-\tau,0) \times \Omega} \in
        L^2( (-\tau,0) \times \Omega) )$;

    \item [$(iii)$]      $y$ solves equation \eqref{20220510-yb-OriginalSystemWithTimeDelay} in the mild sense (see Section \ref{section-ExplicitExpression}).
\end{itemize}
\end{definition}

One can easily check that the solution defined above exists and is unique.
{\it We denote this solution by $y(\cdot,\cdot;y_0,\phi)$ or $y(\cdot;y_0,\phi)$ which
corresponds to a real-valued function of time-space variables or a $L^2(\Omega)$-valued
function of the time variable. }

\subsection{Plan of this paper}
This paper is organized as follows. Section \ref{mainresults-1-7} displays our main results. Section \ref{section-LocalRegularity} presents several regularity properties for the classical heat equations, which will be used in the proofs of the main theorems. Section \ref{section-ExplicitExpression} provides an explicit expression of the solutions of equation \eqref{20220510-yb-OriginalSystemWithTimeDelay} in terms of the classical heat semigroup.
Section \ref{proof of main theorem}
 proves the main theorems. Section \ref{section-NumericalSimulations} offers numerical experiments that confirm the results of the first main result, Theorem \ref{20230510-yb-theorem-PropagationOfSingularitiesOnlyWithInitialdata}. Section \ref{section-yu-section-2-22} is the appendix.

\section{Main results}\label{mainresults-1-7}

The first main result of this paper describes the propagation of singularities in the time  sequence
 $\{j\tau\}_{j\in \mathbb N}$  for solutions of equation \eqref{20220510-yb-OriginalSystemWithTimeDelay} with the historical value
$\phi \in C_0^{\infty}((-\tau,0) \times \Omega)$.

\begin{theorem}\label{20230510-yb-theorem-PropagationOfSingularitiesOnlyWithInitialdata}
Let $y_0\in L^2(\Omega)$ and $\phi \in C_0^{\infty}((-\tau,0) \times \Omega)$.
Then, for any $x_0\in \Omega$, $j\in \mathbb N$ and  $s,\alpha\in \mathbb N$  with $\alpha \leq j$, the following three statements are equivalent:
\begin{enumerate}
    \item [$(i)$] $y_0 \not\in H^s_{loc}(x_0)$;

    \item[$(ii)$] $y(\cdot,\cdot;y_0,\phi)  \not\in H_{loc}^{\alpha,s + 1 + 2(j-\alpha)}(j\tau,x_0)$;

    \item [$(iii)$] $ y(\cdot,\cdot;y_0,\phi)  \not\in
     \widehat{H}^{(0,s+1), 2j}_{loc}(j\tau,x_0)$.
\end{enumerate}
Furthermore, the solution $y(\cdot,\cdot;y_0,\phi)$ is smooth
in the set $\big( (-\tau,+\infty) \setminus \{j\tau\}_{j=0}^{+\infty} \big) \times \Omega$.
\end{theorem}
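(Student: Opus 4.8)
The plan is to base everything on the explicit representation of the solution obtained in Section \ref{section-ExplicitExpression}. Writing $S(\sigma):=e^{\sigma\Delta_D}$ for the Dirichlet heat semigroup and decomposing $y(\cdot;y_0,\phi)=y^{(0)}+y^{(\phi)}$ according to the initial datum and the history, the method of steps gives, for the initial-data part,
\[
y^{(0)}(t)=\sum_{0\le k,\ k\tau<t} a^{k}\,\frac{(t-k\tau)^{k}}{k!}\,S(t-k\tau)y_0 ,
\]
which one verifies directly solves \eqref{20220510-yb-OriginalSystemWithTimeDelay} with zero history (using $\partial_t[\,\cdot\,]=\Delta_D[\,\cdot\,]+a\,(\text{previous term})(t-\tau)$ and the semigroup law). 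Because $\phi\in C_0^\infty((-\tau,0)\times\Omega)$ is compactly supported away from $t=0,-\tau$ and from $\partial\Omega$, the induced source in each step is smooth and compactly supported in time, so $y^{(\phi)}$ is $C^\infty$ on $(-\tau,+\infty)\times\Omega$ by interior parabolic regularity; this is where the hypothesis on $\phi$ is used. Fixing $x_0\in\Omega$ and a time $j\tau$, for $k<j$ the argument $t-k\tau$ stays near $(j-k)\tau\ge\tau>0$, so $S(t-k\tau)y_0$ is jointly smooth in $(t,x)$ near $(j\tau,x_0)$. Hence, modulo a function that is $C^\infty$ near $(j\tau,x_0)$,
\[
y(t,x;y_0,\phi)=a^{j}\,w(t-j\tau,x),\qquad w(\sigma,x):=\chi_{\{\sigma>0\}}\,\frac{\sigma^{j}}{j!}\,(S(\sigma)y_0)(x).
\]
Since $a\ne0$ and adding a locally smooth function changes none of the local Sobolev memberships in (ii)--(iii), the whole theorem is reduced to a statement about the single term $w$ at $(0,x_0)$.

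The core is then the following heat-trace characterization, which I expect to be the substantive lemma of Section \ref{section-LocalRegularity}: for $g\in L^2(\Omega)$, $x_0\in\Omega$, $s\in\mathbb N$ and any $\alpha\le j$,
\[
g\in H^{s}_{loc}(x_0)\quad\Longleftrightarrow\quad \chi_{\{\sigma>0\}}\,\sigma^{j}\,S(\sigma)g\in H^{\alpha,\,s+1+2(j-\alpha)}_{loc}(0,x_0).
\]
Note $2\alpha+\bigl(s+1+2(j-\alpha)\bigr)=s+1+2j$ is independent of $\alpha$, which is exactly the bookkeeping in \eqref{20241218-yb-JointLocalSobolevSpace} and explains why all these spaces tie to the single index $s$. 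The mechanism is the parabolic trade $\partial_\sigma S(\sigma)g=\Delta_D S(\sigma)g$ together with the identity $\int_0^\infty\sigma^{2m}\,\|S(\sigma)g\|_{H^{s+1+2m}}^{2}\,d\sigma\simeq\|g\|_{H^{s}}^{2}$ (seen spectrally from $\int_0^\infty\sigma^{2m}e^{-2\lambda\sigma}\,d\sigma=(2m)!/(2\lambda)^{2m+1}$): expanding $\partial_\sigma^{\alpha}(\sigma^{j}S(\sigma)g)$ by Leibniz, every resulting term $\sigma^{j-i}\Delta_D^{\alpha-i}S(\sigma)g$ $(0\le i\le\alpha)$ lies in $L^2_\sigma H^{s+1+2(j-\alpha)}_x$ near $\sigma=0$ if and only if $g\in H^s$, with the top order $\alpha'=\alpha$ the binding constraint while lower time-orders are weaker. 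For the local (rather than global) version one first localizes $g$ by a cutoff $\rho\in C_0^\infty$ equal to $1$ near $x_0$: the off-diagonal smoothness of the heat kernel makes $S(\sigma)\bigl((1-\rho)g\bigr)$ jointly smooth near $(0,x_0)$, so only $S(\sigma)(\rho g)$ matters, and one runs the spectral/energy estimate after commuting $\Delta_D$ with the spatial cutoffs.

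Granting this lemma with $g=y_0$, the three statements line up at once. The equivalence (i)$\Leftrightarrow$(ii) for each fixed $\alpha\le j$ is precisely the lemma transported through $w$ and the locally smooth remainder. The space in (iii) is, by \eqref{20241218-yb-JointLocalSobolevSpace}, $\widehat H^{(0,s+1),2j}_{loc}(j\tau,x_0)=\bigcap_{\alpha=0}^{j}H^{\alpha,\,s+1+2(j-\alpha)}_{loc}(j\tau,x_0)$, so (iii) asserts failure of membership in at least one factor, i.e.\ that (ii) holds for some $\alpha$; since each such (ii) is equivalent to (i), so is their disjunction, giving (i)$\Leftrightarrow$(iii). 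Finally, the ``furthermore'' clause follows from the same decomposition: on $(-\tau,0)\times\Omega$ one has $y=\phi\in C_0^\infty$, while on each open slab $(j\tau,(j+1)\tau)\times\Omega$ every term $S(t-k\tau)y_0$ $(k\le j)$ has its time argument bounded below by a positive constant on compact subsets, so $y^{(0)}$ is jointly smooth there; together with smoothness of $y^{(\phi)}$ this gives $C^\infty$ regularity off the discrete set $\{j\tau\}_{j\ge0}$.

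The main obstacle is the local heat-trace lemma, and within it two points deserve the most care. First, $S(\sigma)$ is nonlocal in $x$, whereas $H^s_{loc}(x_0)$ is a genuinely local condition; reconciling these requires the pseudolocal/parametrix step that discards $S(\sigma)((1-\rho)y_0)$ as smooth near $x_0$ and the commutator estimates between $\Delta_D$ and spatial cutoffs, which must be propagated through all orders $\alpha\le j$. Second, the equivalence must hold for every admissible $\alpha$ along the fixed joint order $2\alpha+\beta=s+1+2j$; making the weighted integrals $\int_0^\varepsilon\sigma^{2(j-i)}\|S(\sigma)y_0\|^2_{H^{s+1+2(j-i)}}\,d\sigma$ behave uniformly, and checking that lower time-orders never impose a stronger condition than $g\in H^s$, is the delicate bookkeeping underlying the stepwise ``$+2$ per period'' phenomenon advertised in (p2).
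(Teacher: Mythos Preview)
Your proposal is correct and follows essentially the same route as the paper: both decompose $y=y(\cdot;y_0,0)+y(\cdot;0,\phi)$, use Lemma \ref{20240401-yubiao-SmoothSolutionWithSmoothHistory} for smoothness of the history part, invoke the explicit representation of Theorem \ref{20230510-yb-proposition-ExpressionForEvolutionWithTimeDelay} to isolate the single term $a^{j}(t-j\tau)^{j}/j!\cdot e^{(t-j\tau)\Delta_D}y_0$ modulo a locally smooth remainder, and reduce everything to the heat-trace characterization you call ``the core'' --- which is exactly Proposition \ref{20231210-yb-proposition-ReverseRegularityForSolutionsOFHeatEquation} together with Corollary \ref{20240401-yubiao-Corollay-ComparableAtInitialTimeForHeatSolutions}, proved via the spectral identity of Lemma \ref{20231210-yb-lemma-IdentityForWeightedHeatSolutions} and the localization/hypoellipticity argument of Lemma \ref{20230524-yb-lemma-PropagationOfSmoothnessForHeatSolutions}. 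Your handling of the logical structure (i)$\Leftrightarrow$(ii) for each $\alpha$ and then (iii) as a disjunction is a minor reorganization of the paper's cycle (i)$\Rightarrow$(ii)$\Rightarrow$(iii)$\Rightarrow$(i), but substantively identical.
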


\begin{remark}\label{remark1.4-1.6-w}
Several notes on Theorem \ref{20230510-yb-theorem-PropagationOfSingularitiesOnlyWithInitialdata} are listed as follows:
\begin{enumerate}
  \item[$(i)$]
  In Theorem \ref{20230510-yb-theorem-PropagationOfSingularitiesOnlyWithInitialdata}, the statement $(i)$ is independent of $j\in \mathbb N$.
  Thus, the equivalence between $(i)$ and $(iii)$ in  Theorem \ref{20230510-yb-theorem-PropagationOfSingularitiesOnlyWithInitialdata} shows that
    equation \eqref{20220510-yb-OriginalSystemWithTimeDelay}
has the following special kind of propagation of singularities:
  the singularities of the solutions propagate periodically in a bidirectional fashion along the time axis. Furthermore, this propagation occurs in a stepwise manner. More specifically, when propagating in the positive time direction, the order of the joint derivatives of the solution increases by 2 for each period; conversely, when propagating in the reverse time direction, the order of the joint derivatives decreases by 2 per period.

  We may call the above as {\it the $\tau$-periodically jumping propagation of singularities along the time direction with  height $2$ and in the time sequence $\{j\tau\}_{j\in \mathbb N}$. }


~~~~Theorem \ref{20230510-yb-theorem-PropagationOfSingularitiesOnlyWithInitialdata} also indicates that
when $\phi \in C_0^{\infty}((-\tau,0) \times \Omega)$,
the above mentioned propagation of singularities is valid only in the time sequence
$\{j\tau\}_{j \in \mathbb{N}}$.
The reason for this is that when  $\phi \in C_0^{\infty}((-\tau,0) \times \Omega)$, $\phi$ and $y_0$  have no mixing effect on $y(\cdot,\cdot;y_0,\phi)$ at $t=0$.

~~~~However, when $\phi\in L^2((-\tau,0)\times\Omega)$, the situation alters.
At this time, the periodic time sequence of the propagation of singularities of   $y(\cdot,\cdot;y_0,\phi)$ can be
 $\{ t_0 + j\tau \}_{ j\in\mathbb{N}}$ for any $t_0\in [-\tau,0)$.
What is more interesting is that the situations corresponding to $t_0=-\tau$ and $t_0\in (-\tau,0)$ are different. These will be presented in the next two theorems.

\item[$(ii)$] The propagation of singularities in Theorem \ref{20230510-yb-theorem-PropagationOfSingularitiesOnlyWithInitialdata} differs significantly from that seen in classical linear wave equations (for comprehensive discussions, see, for example, \cite{Bardos-Lebeau-Rauch-1992, Rauch-2012}), as well as heat equations with memory (see \cite{Wang-Zhang-Zuazua-2022}). It exhibits the following three distinct characteristics:
\begin{itemize}[leftmargin=3em]
    \item[$(1)$] Singularities propagate along the time direction with zero velocity.
    \item[$(2)$] Singularities recur periodically, appearing once in each period, and while the solutions, given a regular history, remain smooth during the intervening time intervals.
    \item[$(3)$] As time progresses along the positive direction, the singularities decrease, accompanied by an increase of two orders in joint derivatives per period. While as time progresses along the negative direction, the singularities increase, accompanied by a reduction of two orders in joint derivatives per period.
\end{itemize}
To offer an intuitive image of the above propagation of singularities, we consider this scenario: a fading light that blinks at regular intervals, with its brightness gradually decreasing. In the context presented here, the light symbolizes the singularities of a solution; its blinking reflects the recurrence of these singularities, and the brightness of the light represents the degree of these singularities.

\item[$(iii)$] It might be surprising
 that the solution $y(\cdot;y_0,\phi)$ (as stated in Theorem \ref{20230510-yb-theorem-PropagationOfSingularitiesOnlyWithInitialdata}) shows greater smoothness at distant periodic lattice points along the time axis. However, this observation  aligns with an intuitive perspective: As the delay time $\tau$ approaches zero, the model \eqref{20220510-yb-OriginalSystemWithTimeDelay} turns back to the pure heat equation. In this case, the observed regularizing effect manifests itself in reproducing the inherent smoothness of solutions to the pure heat equation, even over small time intervals.

 \item[$(iv)$] The equivalence between (ii) and (iii) of Theorem \ref{20230510-yb-theorem-PropagationOfSingularitiesOnlyWithInitialdata} can be elucidated as follows: the interchangeability of time and space derivatives occurs in a fixed ratio to each other (specifically, each time derivative corresponds to two space derivatives). This parallel can be drawn to the analogous results found in classical heat equations.

~~~~Furthermore, the equivalence between (i) and (ii) of Theorem \ref{20230510-yb-theorem-PropagationOfSingularitiesOnlyWithInitialdata} still holds when the space
$H_{loc}^{\alpha,s + 1 + 2(j-\alpha)}(j\tau,x_0)$ is replaced by the
larger space $H_{loc,+}^{\alpha,s + 1 + 2(j-\alpha)}(j\tau,x_0)$ (defined by \eqref{yu-10-15-3}) which focuses only on the right neighborhoods of $j\tau$. This can be seen in the proof of this theorem.

\end{enumerate}
\end{remark}

The second main result of this paper presents the propagation of singularities in the time sequence $\{t_0 + j\tau\}_{j\in \mathbb N}$ (where $t_0\in(-\tau,0)$) for solutions of equation \eqref{20220510-yb-OriginalSystemWithTimeDelay} with  $\phi\in L^2((-\tau,0)\times\Omega)$.

\begin{theorem}\label{20230510-yb-theorem-PropagationOfSingularitiesForGeneralSolutions}
 Suppose that $y_0\in L^2(\Omega)$ and $\phi\in L^2((-\tau,0)\times\Omega)$. Let $t_0 \in (-\tau,0)$ and $x_0\in\Omega$.
 Let $r,s\in\mathbb{N}$. Then for each
   $j \in \mathbb N^+$, the following statements are equivalent:
\begin{enumerate}
  \item [$(i)$] $\phi \not\in H^{r,s}_{loc}(t_0, x_0)$;

  \item[$(ii)$] $y(\cdot,\cdot;y_0,\phi)  \not\in \widehat{H}^{(r,s), 2j}_{loc}(t_0+j\tau,x_0)$.
 \end{enumerate}
\end{theorem}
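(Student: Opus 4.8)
The plan is to isolate the effect of the history $\phi$ and then march forward one delay-period at a time, converting each period into a single parabolic regularity gain of two joint orders.

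\textbf{Reduction to the history.} By linearity of \eqref{20220510-yb-OriginalSystemWithTimeDelay} in $(y_0,\phi)$ I write $y(\cdot;y_0,\phi)=y(\cdot;y_0,0)+y(\cdot;0,\phi)$. Since $0\in C_0^\infty((-\tau,0)\times\Omega)$, the last assertion of Theorem \ref{20230510-yb-theorem-PropagationOfSingularitiesOnlyWithInitialdata} shows that $y(\cdot;y_0,0)$ is smooth on $\big((-\tau,+\infty)\setminus\{k\tau\}_{k\in\mathbb N}\big)\times\Omega$. As $t_0\in(-\tau,0)$ and $j\in\mathbb N^+$, the point $t_0+j\tau$ lies in $((j-1)\tau,j\tau)$ and hence avoids the lattice $\{k\tau\}_{k}$; thus $y(\cdot;y_0,0)$ is smooth near $(t_0+j\tau,x_0)$, so membership of $y(\cdot;y_0,\phi)$ in $\widehat H^{(r,s),2j}_{loc}(t_0+j\tau,x_0)$ is unaffected by the $y_0$-part. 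Writing $y_\phi:=y(\cdot;0,\phi)$, it therefore suffices to prove
\[
    y_\phi\in\widehat H^{(r,s),2j}_{loc}(t_0+j\tau,x_0)
    \iff
    \phi\in H^{r,s}_{loc}(t_0,x_0),
\]
and then pass to contrapositives.

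\textbf{A one-period parabolic step.} Writing $S(t):=e^{t\Delta_D}$, the mild formulation of \eqref{20220510-yb-OriginalSystemWithTimeDelay} (Section \ref{section-ExplicitExpression}) gives the Volterra identity
\[
    y_\phi(t)=a\int_0^t S(t-\sigma)\,y_\phi(\sigma-\tau)\,d\sigma,\qquad t>0,
\]
where $y_\phi(\theta)=\phi(\theta)$ for $\theta\in(-\tau,0)$. The heart of the argument is the following parabolic equivalence, which I would extract from the regularity results of Section \ref{section-LocalRegularity}: for the inhomogeneous heat equation with zero initial data and source $g$, and for any $t_*>0$, $x_0\in\Omega$ and $p\in\mathbb N$,
\[
    \int_0^\cdot S(\cdot-\sigma)\,g(\sigma)\,d\sigma
    \in\widehat H^{(r,s),p+2}_{loc}(t_*,x_0)
    \iff
    g\in\widehat H^{(r,s),p}_{loc}(t_*,x_0).
\]
This rests on two facts: the quantitative parabolic gain (a source of joint order $p$ produces a solution of joint order $p+2$, reflecting the scaling in which one time derivative costs two space derivatives, exactly as encoded in \eqref{20241218-yb-JointLocalSobolevSpace}); and the hypoellipticity and forward-smoothing of the heat operator, which guarantees that singularities of $g$ sitting at earlier times or at other spatial points do not reach $(t_*,x_0)$, so that the equivalence is genuinely local in $g$ near $(t_*,x_0)$.

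\textbf{Telescoping.} Applying the equivalence with $g=a\,y_\phi(\cdot-\tau)$ and $t_*=t_0+k\tau$, and using the translation invariance of the local spaces in the time variable, I obtain, for every $k\in\mathbb N^+$ and $p\in\mathbb N$,
\[
    y_\phi\in\widehat H^{(r,s),p+2}_{loc}(t_0+k\tau,x_0)
    \iff
    y_\phi\in\widehat H^{(r,s),p}_{loc}(t_0+(k-1)\tau,x_0).
\]
Starting from $k=j$, $p=2j-2$ and descending through $k=j,j-1,\dots,1$ while lowering $p$ by $2$ at each step chains
\[
    y_\phi\in\widehat H^{(r,s),2j}_{loc}(t_0+j\tau,x_0)
    \iff\cdots\iff
    y_\phi\in\widehat H^{(r,s),0}_{loc}(t_0,x_0).
\]
Since $\widehat H^{(r,s),0}_{loc}(t_0,x_0)=H^{r,s}_{loc}(t_0,x_0)$ and $y_\phi=\phi$ on $(-\tau,0)\times\Omega$, the right-hand side is exactly $\phi\in H^{r,s}_{loc}(t_0,x_0)$, completing the reduced claim. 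I expect the decisive difficulty to be the one-period parabolic equivalence, and in particular its converse direction together with the ``no forward pollution'' property in the joint scale: one must show both that the part of the source localized near $(t_*,x_0)$ yields a solution gaining precisely two joint orders, uniformly over the pairs $(\alpha,\beta)$ with $2\alpha+\beta=p+2$ defining the intersection \eqref{20241218-yb-JointLocalSobolevSpace}, and that the complementary part of the source (living at earlier times or away from $x_0$) produces a function smooth at $(t_*,x_0)$. Handling the latter near the target time, where the smoothing factor $S(t-\sigma)$ degenerates as $\sigma\uparrow t_*$, is the delicate point, and is exactly what the heat-equation regularity lemmas of Section \ref{section-LocalRegularity} are designed to supply.
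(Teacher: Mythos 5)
Your proposal is correct and follows essentially the same route as the paper: the one-period equivalence you isolate (a gain of exactly two joint orders per delay interval, uniformly over the pairs $(\alpha,\beta)$ with $2\alpha+\beta=p$) is precisely Proposition~\ref{2021218-yb-lemma-NonhomogenuousHeat} applied with $(r,s)$ replaced by $(r+\alpha,s+\beta)$ — whose proof supplies the localization and ``no forward pollution'' via cutting off the source and invoking hypoellipticity of $\partial_t-\Delta$ — and your telescoping over the $j$ periods is the content of Lemma~\ref{20241219-yb-proposition-EquivalenceBetweenTwoInstants} together with the base step at $k=1$ where the source is $a\phi(\cdot-\tau,\cdot)$. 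The only inessential difference is your preliminary reduction to $y_0=0$, which the paper does not need because Proposition~\ref{2021218-yb-lemma-NonhomogenuousHeat} is purely local and applies directly to the equation $\partial_t y-\Delta y=a\,y(\cdot-\tau,\cdot)$ regardless of the initial datum.
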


\begin{remark}
  Several notes regarding Theorem \ref{20230510-yb-theorem-PropagationOfSingularitiesForGeneralSolutions} are given as follows:
\begin{enumerate}
\item [$(i)$] Since the statement $(i)$ in Theorem \ref{20230510-yb-theorem-PropagationOfSingularitiesForGeneralSolutions} is independent of
   $j \in \mathbb N^+$,
    Theorem \ref{20230510-yb-theorem-PropagationOfSingularitiesForGeneralSolutions}
  shows that the solution $y(\cdot,\cdot;y_0,\phi)$ has the $\tau$-periodically jumping periodic propagation of singularities along the time direction with  height $2$ and in the time sequence $\{ t_0 + j\tau\}_{j\in \mathbb N}$ for each $t_0\in (-\tau,0)$, mentioned in the note $(i)$
  of     Remark \ref{remark1.4-1.6-w}.

\item [$(ii)$]  Theorem \ref{20230510-yb-theorem-PropagationOfSingularitiesForGeneralSolutions} only cares about case $t_0\in (-\tau,0)$. At this point, $y_0$ has no effect on the propagation of singularities for $y(\cdot,\cdot;y_0,\phi)$, and $\phi$ takes its place.

 \item [$(iii)$] Theorem \ref{20230510-yb-theorem-PropagationOfSingularitiesForGeneralSolutions} also shows how the singularities in the historical value at time $t_0\in (-\tau,0)$ dictate those of a solution at time instants $\{t_0 + j\tau\}_{j\in \mathbb N}$.
\end{enumerate}

\end{remark}

The third main result gives necessary and sufficient conditions (for
$y_0$ and $\phi$) on the propagation of singularities in
the periodic time sequence $\{j\tau\}_{j\in \mathbb N}$   for solutions of equation \eqref{20220510-yb-OriginalSystemWithTimeDelay}.

\begin{theorem}\label{20240406-yubiao-Theorem-SingularitiesOfHistoricalValueAtEndpoints}
Suppose that $y_0\in L^2(\Omega)$ and $\phi\in L^2((-\tau,0)\times\Omega)$.
Let $x_0\in\Omega$ and $r,s\in\mathbb{N}$. Then for each
$j\in \mathbb N^+$,
the following statements are equivalent:
\begin{itemize}
\item[$(i)$]  The solution  $y(\cdot,\cdot;y_0, \phi) \not\in \widehat{H}^{(r,s),2j}_{loc}( -\tau + j \tau, x_0)$.
\item[$(ii)$]  At least one of the  following three statements is not true:
    \begin{itemize}
    \item[$(1)$]  $ \phi \in    H^{r,s}_{loc,+}(-\tau,x_0)  $  and  $ \phi \in    H^{r+1, s}_{loc,-}(0,x_0)  \cap   H^{r, s+2}_{loc,-}(0,x_0)$;
    \item[$(2)$]  The following functions
        \begin{align}\label{20241222-yb-DefinitionsOf-gk}
            g_0(\cdot) := y_0(\cdot),  ~\ldots,~
            g_k(\cdot) := a \partial_t^{k-1} \phi(-\tau, \cdot)   +  \Delta g_{k-1}(\cdot)
            ~\text{for}~
            k \in \mathbb N^+ \cap [0,r],
        \end{align}
    defined locally around $x_0$, have following regularities:
        \begin{align}\label{20241222-yb-RegularityOfAllDerivativesOfSolution-gk}
            g_k \in H^{s+1}_{loc}(x_0)   ~\text{for each integer}~   k \in [0,r];
        \end{align}
    \item[$(3)$] The following $r^{th}$-order compatibility conditions are verified:
        \begin{align*}
            \partial_t^{k} \phi(0,\cdot) = g_k(\cdot)
            ~\text{locally around}~   x_0
            ~\text{for each}~ k \in \mathbb N \cap [0,r].
        \end{align*}
    \end{itemize}
\end{itemize}

\end{theorem}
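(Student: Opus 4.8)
My plan is to prove the logically equivalent statement obtained by negating both sides of the asserted equivalence, namely that for each fixed $j\in\mathbb N^+$,
\[
y(\cdot,\cdot;y_0,\phi)\in\widehat H^{(r,s),2j}_{loc}(-\tau+j\tau,x_0)
\quad\Longleftrightarrow\quad
\text{(1), (2) and (3) all hold}.
\]
Since $-\tau+j\tau=(j-1)\tau$ and the right-hand side does not depend on $j$, I would organize the argument as an induction on $j$: a base case ($j=1$) characterizing the two-sided joint regularity of the solution at $(0,x_0)$ by conditions (1)--(3), and a propagation step carrying the equivalence from $(j-1)\tau$ at joint order $2j$ to $j\tau$ at joint order $2(j+1)$. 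Composing the step with the base case then yields the statement for every $j$.

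The propagation step uses only the interior parabolic regularity gathered in Section \ref{section-LocalRegularity} together with the delay structure. On $((j-1)\tau,j\tau)$ the solution obeys $\partial_t y-\Delta_D y=a\,y(\cdot-\tau,\cdot)$, and since $t\mapsto t-\tau$ carries a two-sided neighborhood of $j\tau$ onto one of $(j-1)\tau$ while preserving joint Sobolev regularity (and $a\neq0$), the source $f:=a\,y(\cdot-\tau,\cdot)$ satisfies
\[
f\in\widehat H^{(r,s),p}_{loc}(j\tau,x_0)
\iff
y\in\widehat H^{(r,s),p}_{loc}((j-1)\tau,x_0)
\]
for every $p\in\mathbb N$. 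As $j\tau>0$ and $x_0\in\Omega$, the point $(j\tau,x_0)$ is interior in time and space, so local parabolic regularity gives
\[
y\in\widehat H^{(r,s),p+2}_{loc}(j\tau,x_0)
\iff
f\in\widehat H^{(r,s),p}_{loc}(j\tau,x_0);
\]
here ``$\Leftarrow$'' is the two-order smoothing gain, while ``$\Rightarrow$'' is the algebraic identity $f=\partial_t y-\Delta_D y$, using that one $\partial_t$ and one $\Delta_D$ cost, respectively, one and two joint orders. Taking $p=2j$ and chaining the two displays produces the step.

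The base case $j=1$ is the crux. Here I would invoke the explicit representation of Section \ref{section-ExplicitExpression}, which on $(0,\tau)$ reads $y(t)=e^{t\Delta_D}y_0+a\int_0^t e^{(t-\sigma)\Delta_D}\phi(\sigma-\tau)\,d\sigma$, and analyze the two sides of the slice $t=0$ before gluing. For $t<0$ one has $y=\phi$, so the left-hand joint regularity of $y$ at $(0,x_0)$ is exactly $\phi\in H^{r+1,s}_{loc,-}(0,x_0)\cap H^{r,s+2}_{loc,-}(0,x_0)$, the second half of (1). For $t>0$, the representation shows that the right-hand joint regularity of $y$ is governed by the regularity of the source $a\phi(\cdot-\tau)$ near $-\tau^+$, i.e. by $\phi\in H^{r,s}_{loc,+}(-\tau,x_0)$ (the first half of (1)), together with the spatial regularity of the one-sided time traces $\partial_t^k y(0^+,\cdot)$. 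Differentiating the equation identifies these traces with the functions of \eqref{20241222-yb-DefinitionsOf-gk}, so that $g_k=\partial_t^k y(0^+,\cdot)$; the parabolic trace theorem then forces any $\widehat H^{(r,s),2}$ function to have such traces in $H^{s+1}_{loc}(x_0)$, which is precisely (2). Finally, the two one-sided regularities fuse into genuine two-sided membership in $\widehat H^{(r,s),2}_{loc}(0,x_0)$ exactly when the traces from the two sides coincide, that is $\partial_t^k\phi(0^-,\cdot)=g_k$ for $0\le k\le r$, which is (3).

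The principal obstacle lies in the base case, and more precisely in the parabolic trace bookkeeping at the degenerate slice $t=0$. The recurring gain of one spatial order — the passage from $s$ to $s+1$ in (2) — reflects that a function in the anisotropic space $\widehat H^{(r,s),2}=H^{r+1,s}_{loc}\cap H^{r,s+2}_{loc}$ has one-sided time traces $\partial_t^k(\cdot)(0^\pm)$ in $H^{s+1}$ for all $0\le k\le r$, a fact that must be established and then reconciled with the data through the recursion $g_k=\Delta g_{k-1}+a\partial_t^{k-1}\phi(-\tau,\cdot)$, in which each $\Delta$ lowers the spatial order by two. Proving the \emph{sufficiency} direction — that (1)--(3) genuinely force two-sided membership in $\widehat H^{(r,s),2}_{loc}(0,x_0)$ starting from data that is merely $L^2$ — is the delicate point, since the heat smoothing degenerates as $t\to0^+$ and the one-sided traces must be shown to exist and to agree. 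By contrast, once the base case is secured the propagation step and the assembly over $j$ are soft consequences of the interior regularity of Section \ref{section-LocalRegularity} and the transport shift encoded in the delay.
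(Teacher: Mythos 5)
Your outline reproduces the paper's own strategy: the reduction from $(j-1)\tau$ to $t=0$ via the interior equivalence $y\in\widehat H^{(r,s),p+2}_{loc}\Leftrightarrow \partial_t y-\Delta y\in\widehat H^{(r,s),p}_{loc}$ iterated along the delay (the paper's Lemma \ref{20241219-yb-proposition-EquivalenceBetweenTwoInstants}), followed by splitting the two-sided regularity at $t=0$ into the left trace of $\phi$, the right one-sided regularity governed by $\phi(\cdot-\tau)$ and the traces $g_k=\partial_t^k y(0^+,\cdot)\in H^{s+1}_{loc}$ (the paper's Lemma \ref{2021220-yb-lemma-IntialTimeCaseForNonhomogenuousHeat}), and the matching condition $(3)$. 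You also correctly locate the genuine difficulty in the one-sided characterization at $t=0$, which the paper settles via Propositions \ref{2021218-yb-lemma-NonhomogenuousHeat} and \ref{20231210-yb-proposition-ReverseRegularityForSolutionsOFHeatEquation}; the argument is sound and essentially identical to the paper's.
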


\begin{remark}
Some remarks on Theorem \ref{20240406-yubiao-Theorem-SingularitiesOfHistoricalValueAtEndpoints} are listed as follows:
\begin{enumerate}
    \item[$(i)$] Since the statement $(ii)$ in
    Theorem \ref{20240406-yubiao-Theorem-SingularitiesOfHistoricalValueAtEndpoints}  is independent of $j$,
     Theorem \ref{20240406-yubiao-Theorem-SingularitiesOfHistoricalValueAtEndpoints}   shows that $(ii)$ is a necessary and sufficient condition
for the periodic propagation of singularities $\tau$ -periodically jumping along the time direction with the height $2$ and
in the time sequence
  $\{j\tau\}_{j\in \mathbb N}$
  for the solution $y(\cdot,\cdot;y_0,\phi)$,  mentioned in the note $(i)$
of Remark \ref{remark1.4-1.6-w}.


 \item[$(ii)$] The condition $(ii)$ is proposed for $\phi$ and $y_0$.
 In fact,
 $(1)$ in $(ii)$ of Theorem \ref{20240406-yubiao-Theorem-SingularitiesOfHistoricalValueAtEndpoints} provides a compatibility condition of $\phi$ at time $-\tau$ and $0$, while $(2)$ and $(3)$ in $(ii)$ of Theorem \ref{20240406-yubiao-Theorem-SingularitiesOfHistoricalValueAtEndpoints} give compatibility conditions of $\phi$ and $y_0$ at time $0$.  Theorem \ref{20240406-yubiao-Theorem-SingularitiesOfHistoricalValueAtEndpoints}  shows how the singularities of $\phi$ in $t_0=-\tau$ and $t_0=0$, and of $y_0$ collectively interact and influence those of
    $y(\cdot, \cdot;y_0,\phi)$ in  the periodic time sequence $\{j\tau\}_{j\in \mathbb N}$.

 \item[$(iii)$]
The time sequence $\{j\tau\}_{j\in \mathbb N}$ in  Theorem \ref{20240406-yubiao-Theorem-SingularitiesOfHistoricalValueAtEndpoints}
corresponds to the situation excluded by Theorem \ref{20230510-yb-theorem-PropagationOfSingularitiesForGeneralSolutions}:
    $t_0=-\tau$ and $t_0=0$.

     \item[$(iv)$]   The propagation of sigularities that appeared in the above three main theorems presents some kind of hyperbolic effect in equation \eqref{20220510-yb-OriginalSystemWithTimeDelay}. Such an effect is not expected for classical heat equations because of the smoothing effect. So, these theorems show huge differences between heat equations with and without time delay.

    \item[$(v)$]  The main theorems can be extended to the following situations with small modifications of the methods developed in this paper:
    \begin{itemize}
        \item any bounded domain $\Omega$, where equation \eqref{20220510-yb-OriginalSystemWithTimeDelay} is verified, without assumptions on the boundary;
        \item other boundary conditions, such as the (homogeneous or nonhomogeneous) Neumann or Robin boundary condition;
        \item and other time-independent (possibly high-order) elliptic differential operators (replacing the Laplacian operator in equation \eqref{20220510-yb-OriginalSystemWithTimeDelay}).

    \end{itemize}

\end{enumerate}

\end{remark}

{\bf Strategy to prove main theorems} Our strategy is as follows:

\begin{itemize}[leftmargin=4em]
    \item[(s1)] We obtained an explicit expression of the solutions of equation \eqref{20220510-yb-OriginalSystemWithTimeDelay} in terms of the classical heat semigroup (see
    Theorem \ref {20230510-yb-proposition-ExpressionForEvolutionWithTimeDelay}). Although it might be possible to derive this expression  indirectly from \cite[Theorem 4.2 and Example 1, pp. 365-366]{Nakagiri-1981},
we obtained it directly using a method developed in  \cite{Wang-Zhang-Zuazua-2022}.
More importantly, we observed the propagation of singularities  through the aforementioned expression.

 \item[(s2)] To prove the propagation of singularities, using the expression of the solutions mentioned above, we derived some local properties of the solutions of the heat equation, including its local relationship with external forces (see Proposition \ref{2021218-yb-lemma-NonhomogenuousHeat} and Lemma \ref{2021220-yb-lemma-IntialTimeCaseForNonhomogenuousHeat}) and its local relationship with initial data (see Proposition \ref{20231210-yb-proposition-ReverseRegularityForSolutionsOFHeatEquation} and Lemma \ref{2021220-yb-lemma-IntialTimeCaseForNonhomogenuousHeat}).
  \end{itemize}

{\bf Novelty} The novelties of this paper can be summarized as follows:
\begin{itemize}[leftmargin=4em]
    \item[(n1)] The phenomenon of propagation of singularities that appeared in the main theorems seems to have been discovered for the first time in the literature.

    \item[(n2)] The $\tau$-periodical jumping propagation of singularities is a novel.
      The periodicity phenomenon for heat equations with time delay is demonstrated from the point of view of propagation of singularities. This viewpoint just provides the first glance at this phenomenon and the possibilities in further study on it, which may enhance the understanding of equation \eqref{20220510-yb-OriginalSystemWithTimeDelay}.
     \item[(n3)] We observed the above phenomena from the expression mentioned in $(s1)$. Such new observations can be explained in another way:

        \item[(n4)]  It is revealed (in Theorem \ref{20240406-yubiao-Theorem-SingularitiesOfHistoricalValueAtEndpoints})  that the initial data and the historical values at $t=-\tau$ and $t=0$ can interact and produce collective influences on the propagation of singularities of solutions for heat equations with time delay.
\end{itemize}


\section{Local regularity properties for the heat equation}
\label{section-LocalRegularity}

This section aims mainly to present the following properties about heat equations:
(i)  the connection between the local regularities of the initial data and the corresponding solutions of heat equations (see Proposition \ref{20231210-yb-proposition-ReverseRegularityForSolutionsOFHeatEquation});
(ii) the connection between the local regularities of nonhomogeneous terms and the corresponding solutions of the heat equations (see Proposition \ref{2021218-yb-lemma-NonhomogenuousHeat}).

We start with the following definitions: denote by $\Delta_D$ the Laplacian operator with , i.e., the following linear unbounded operator over $L^2(\Omega)$:
\begin{align}\label{20241224-yb-LaplacianWithDiricihletBoundaryCondition}
    \Delta_D f :=  \Delta f,~   f \in H^2(\Omega) \cap H_0^1(\Omega).
\end{align}
Write $\{\lambda_i\}_{i\in\mathbb{N}^+}\subset \mathbb{R}^+$ and $\{e_i\}_{i\in\mathbb{N}^+}$ for the eigenvalues and the corresponding eigenfunctions, normalized in $L^2(\Omega)$,
    of the operator $-\Delta_D$.
Define the following Hilbert space with $s\in \mathbb{R}$:
\begin{eqnarray}\label{def-space-with-boundary-condition}
    \mathcal H^s :=
    \left\{f= \sum_{i=1}^{+\infty}a_i e_i : \{a_i\}_{i\in\mathbb{N}^+}\subset \mathbb R,\;
    \sum_{i=1}^{+\infty}|a_i|^2 \lambda_i^{s}<+\infty
    \right\},
\end{eqnarray}
   with the inner product:
\begin{align*}
    \langle f, g\rangle_{\mathcal H^s}:=
    \sum_{i=1}^{+\infty} a_i b_i \lambda_i^{s}
    ~\text{when}~
    f= \sum_{i=1}^{+\infty} a_i e_i \in \mathcal H^s
    ~\text{and}~
    g= \sum_{i=1}^{+\infty} b_i e_i \in \mathcal H^s.
\end{align*}
Write $\{ e^{t \Delta_D }  \}_{t\geq 0}$ for the heat semigroup, that is, the  $C_0$-semigroup on $L^2(\Omega)$, generated by $\Delta_D$.
For each $s\in \mathbb{R}$,
          $\{ e^{t \Delta_D}  \}_{t\geq 0}$ is also the $C_0$-semigroup on $\mathcal{H}^s$, with the generator
$\Delta_D$ and its domain $\mathcal{H}^{s+2}$.

The main results in this section are presented below.
The first one concerns the connection between the local regularities of nonhomogeneous terms and the corresponding solutions of the heat equations (which will be used in the proof of Theorem \ref{20230510-yb-theorem-PropagationOfSingularitiesForGeneralSolutions}).

\begin{proposition}\label{2021218-yb-lemma-NonhomogenuousHeat}
Let $( t_0,x_0 ) \in  \mathbb R \times \mathbb R^n$. Let $W$ be an open neighborhood of $(t_0,x_0)$. Suppose that  two distributions
    $y,f \in \mathcal{D}'(W)$ satisfy
\begin{align}\label{20241218-yubiao-NonhomogenuousHeatEquation}
\partial_t y  - \Delta y = f ~\text{in}~ W.
\end{align}
Then for any $r,s\in \mathbb N$, the following statements are equivalent:
\begin{itemize}[leftmargin=4em]
    \item[$(i)$ ] $y \in H^{r+1,s}_{loc}(t_0,x_0) \cap H^{r,s+2}_{loc}(t_0,x_0)$.
    \item[$(ii)$]  $f\in H^{r,s}_{loc}(t_0,x_0)$.
\end{itemize}
\end{proposition}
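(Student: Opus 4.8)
The plan is to read \eqref{20241218-yubiao-NonhomogenuousHeatEquation} as an elliptic problem in the \emph{parabolic} (anisotropic) scaling, in which the operator $L:=\partial_t-\Delta$ has order $2$ with one time derivative counting as two space derivatives. In this reading the asserted transfer of two parabolic orders between $f$ and $y$ is exactly local parabolic regularity, and the two members of $(i)$, namely $H^{r+1,s}_{loc}$ and $H^{r,s+2}_{loc}$, are precisely the two ``corner'' spaces $\widehat H^{(r,s),2}_{loc}$. The implication $(i)\Rightarrow(ii)$ is immediate: if $y\in H^{r+1,s}_{loc}(t_0,x_0)$ then $\partial_t y\in H^{r,s}_{loc}(t_0,x_0)$, and if $y\in H^{r,s+2}_{loc}(t_0,x_0)$ then $\Delta y\in H^{r,s}_{loc}(t_0,x_0)$; subtracting and invoking \eqref{20241218-yubiao-NonhomogenuousHeatEquation} gives $f=\partial_t y-\Delta y\in H^{r,s}_{loc}(t_0,x_0)$.

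For the converse $(ii)\Rightarrow(i)$ I would use a Fourier-multiplier parametrix combined with localization and a bootstrap. Fix $\rho\in C_0^\infty(W)$ with $\rho\equiv1$ near $(t_0,x_0)$ and put $u:=\rho y$, a compactly supported distribution. From \eqref{20241218-yubiao-NonhomogenuousHeatEquation},
\begin{equation*}
Lu=\rho f+(\partial_t\rho)\,y-2\,\nabla\rho\cdot\nabla y-(\Delta\rho)\,y=:g .
\end{equation*}
Taking the space--time Fourier transform, with $\xi_0$ dual to $t$ and $\xi$ dual to $x$, the symbol of $L$ is $i\xi_0+|\xi|^2$, and the whole point is the two elementary bounds
\begin{equation*}
\frac{\langle\xi_0\rangle}{\lvert i\xi_0+|\xi|^2\rvert}\lesssim 1
\qquad\text{and}\qquad
\frac{\langle\xi\rangle^{2}}{\lvert i\xi_0+|\xi|^2\rvert}\lesssim 1
\qquad\text{on}\qquad \{\,\xi_0^2+|\xi|^4\geq 1\,\},
\end{equation*}
which hold because $\lvert i\xi_0+|\xi|^2\rvert=\sqrt{\xi_0^2+|\xi|^4}$ dominates both $\langle\xi_0\rangle$ and $\langle\xi\rangle^2$ on that set. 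On the complementary bounded low-frequency region no estimate is needed: $\widehat u$ is a smooth, polynomially bounded function by the Paley--Wiener--Schwartz theorem, so its contribution to any weighted $L^2$ norm over a bounded set is automatically finite. Combining the two regions produces the a priori estimate
\begin{equation*}
\|u\|_{H^{r+1,s}}+\|u\|_{H^{r,s+2}}\lesssim \|g\|_{H^{r,s}}+C(u),
\end{equation*}
with $C(u)<\infty$ coming from low frequencies; in particular $u\in H^{r+1,s}\cap H^{r,s+2}$ as soon as $g\in H^{r,s}$.

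The main obstacle is that $g$ contains the commutator $-2\,\nabla\rho\cdot\nabla y$, which costs one spatial derivative and therefore cannot be placed in $H^{r,s}$ directly; this is the familiar loss of a derivative in local elliptic/parabolic regularity. I would resolve it by a bootstrap on nested cutoffs. Starting from the fact that $y$ lies, near $(t_0,x_0)$, in some $H^{a,b}_{loc}$ of finite (possibly negative) order, I feed this into the estimate: the terms $\rho f$, $(\partial_t\rho)y$, $(\Delta\rho)y$ cost no derivative while the commutator costs one space derivative, so each application lowers the regularity demand by only one parabolic order yet the parametrix returns two, a net gain of one parabolic order. Crucially the estimate returns \emph{both} corner spaces $H^{a+1,b-1}\cap H^{a,b+1}$ at once, so I can steer: first use the $(a+1,b-1)$ gains to raise the time index to $r$, then the $(a,b+1)$ gains to raise the space index, until $y\in H^{r,s+1}_{loc}$. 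At that level $\nabla\rho\cdot\nabla y\in H^{r,s}$, hence $g\in H^{r,s}$, and one final application of the a priori estimate yields $y\in H^{r+1,s}_{loc}(t_0,x_0)\cap H^{r,s+2}_{loc}(t_0,x_0)$, which is $(i)$. The only external input is that every distribution is locally of finite order (so the bootstrap has a starting point), together with the routine bookkeeping that the space- and time-gains can be made to climb to the prescribed indices.
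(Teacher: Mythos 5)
Your proposal is correct in both directions, but for $(ii)\Rightarrow(i)$ it takes a genuinely different route from the paper. You cut off the \emph{solution} ($u=\rho y$), invert the full space--time symbol $i\xi_0+|\xi|^2$ on high frequencies, and then run a bootstrap on nested cutoffs to absorb the one-derivative loss in the commutator $-2\nabla\rho\cdot\nabla y$. The paper instead cuts off the \emph{source}: after reducing to $\alpha\le r$, $|\beta|\le s$ and the model case $H^{1,0}_{loc}\cap H^{0,2}_{loc}$, it solves an auxiliary initial--boundary value problem $\partial_t\hat y-\Delta\hat y=\rho\,\partial_t^\alpha\partial_x^\beta f$ on a smooth bounded domain with zero data, imports the maximal parabolic regularity $\hat y\in H^1(L^2)\cap L^2(H^2)$ from Evans, and observes that $\partial_t^\alpha\partial_x^\beta y-\hat y$ solves the homogeneous heat equation locally and is therefore $C^\infty$ by hypoellipticity of $\partial_t-\Delta$. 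That single appeal to hypoellipticity swallows the entire commutator problem in one stroke, so no bootstrap and no bookkeeping of intermediate indices are needed; the price is reliance on two external black boxes (parabolic maximal regularity and hypoellipticity). Your parametrix argument is more self-contained and makes the ``one time derivative $=$ two space derivatives'' mechanism completely explicit, at the cost of the steering argument, which you do carry out correctly: climb the time index via the $(a+1,b-1)$ corner, then the space index via the $(a,b+1)$ corner up to $H^{r,s+1}_{loc}$, and finish with one last application. Two small points you should make explicit if you write this up: the intermediate spaces in your bootstrap have (possibly negative) non-integer-range indices, so they must be taken as the Fourier-multiplier spaces with weight $\langle\xi_0\rangle^{a}\langle\xi\rangle^{b}$, together with the standard fact that multiplication by $C_0^\infty$ functions is bounded on such spaces; and the identification of these global anisotropic spaces with the paper's $H^{a}(I;H^{b}(U))$-type spaces should be recorded for the nonnegative integer indices appearing in the statement.
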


The second main result in this section is stated in the following, which gives the relations between the local regularities of an initial data and corresponding solutions of the heat equations (which will be used in the proof of Theorem \ref{20230510-yb-theorem-PropagationOfSingularitiesOnlyWithInitialdata}).

\begin{proposition}\label{20231210-yb-proposition-ReverseRegularityForSolutionsOFHeatEquation}
Let $T>0$ and $U$ be an open nonempty subset of $\mathbb R^n$.
Suppose that $y\in C([0,T); \mathcal{D}'(U) )$ satisfies
\begin{align}\label{20241223-yb-HeatEquationForRegularityInvolvingIntialData}
    \partial_t y  -  \Delta  y = 0
    ~\text{in}~
    (0,T) \times U.
\end{align}
Then for any  $\alpha,\beta\in \mathbb N$ and $s\in \mathbb R$,
the following statements are equivalent:
\begin{itemize}[leftmargin=4em]
    \item[$(i)$] The function $t\rightarrow \partial_t^{\alpha} (t^{\beta} y) $ ($t\in (0,T)$) belongs to the space $L^2((0,T); H^{s+2(\beta-\alpha)+1}_{loc}( U ) )$.
    \item[$(ii)$]  The function $y|_{t=0}$ belongs to the space $H^s_{loc}( U )$.
\end{itemize}
%
\end{proposition}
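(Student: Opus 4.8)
The plan is to reduce the local statement to a sharp two–sided estimate for the free heat semigroup on $\mathbb{R}^n$, and then to transfer it to the open set $U$ by cutting off and exploiting the smoothing of the heat kernel away from the diagonal. I work throughout with the free flow $e^{t\Delta}$ on $\mathbb{R}^n$ and the Fourier transform, since local Sobolev membership is intrinsic to $\mathbb{R}^n$ and does not see the boundary condition.

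First I would establish the model (global) estimate. For $g\in H^s(\mathbb{R}^n)$ and $u(t):=e^{t\Delta}g$, Plancherel gives, with $s'=s+2(\beta-\alpha)+1$,
\[
\|\partial_t^\alpha(t^\beta u)\|_{L^2((0,T);H^{s'}(\mathbb{R}^n))}^2
=\int_{\mathbb{R}^n}|\widehat g(\xi)|^2(1+|\xi|^2)^{s'}
\Big(\int_0^T|\partial_t^\alpha(t^\beta e^{-t|\xi|^2})|^2\,dt\Big)\,d\xi .
\]
Writing $\partial_t^\alpha(t^\beta e^{-\lambda t})=\lambda^{\alpha-\beta}q_{\alpha,\beta}(\lambda t)\,e^{-\lambda t}$ for the explicit polynomial $q_{\alpha,\beta}(u)=\sum_{k=0}^{\min(\alpha,\beta)}\binom{\alpha}{k}\tfrac{\beta!}{(\beta-k)!}(-1)^{\alpha-k}u^{\beta-k}$, which is of degree $\beta$ with leading coefficient $(-1)^\alpha$ and hence not identically zero, the substitution $u=\lambda t$ yields $\int_0^T|\partial_t^\alpha(t^\beta e^{-\lambda t})|^2\,dt=\lambda^{2\alpha-2\beta-1}\int_0^{\lambda T}|q_{\alpha,\beta}(u)|^2e^{-2u}\,du$. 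Since $\int_0^\infty|q_{\alpha,\beta}|^2e^{-2u}\,du\in(0,+\infty)$, this quantity is comparable to $|\xi|^{2(2\alpha-2\beta-1)}$ for $|\xi|\ge1$. Because local Sobolev regularity is a high-frequency matter and I will only apply this to compactly supported $g$ (whose low-frequency part is entire, hence harmless), the bracketed factor is two-sidedly equivalent to $(1+|\xi|^2)^{2\alpha-2\beta-1}$ for the present purpose, giving $g\in H^s(\mathbb{R}^n)\iff\partial_t^\alpha(t^\beta u)\in L^2((0,T);H^{s'}(\mathbb{R}^n))$. The strict positivity of that integral is precisely what makes the reverse (lower) bound work.

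Next I would localize. Fix $x_0\in U$, pick $\chi\in C_0^\infty(U)$ equal to $1$ on a neighborhood $V$ of $x_0$, and set $w:=\chi y$. As $\chi$ is time-independent, $\partial_t^\alpha(t^\beta w)=\chi\,\partial_t^\alpha(t^\beta y)$, and $w$ solves $\partial_t w-\Delta w=F:=-[\Delta,\chi]y$ on $\mathbb{R}^n$, where $F\in C([0,T);\mathcal{E}'(\mathbb{R}^n))$ is supported in $\mathrm{supp}\,\nabla\chi$, which I arrange to be disjoint from $V$. Duhamel's formula gives $w(t)=e^{t\Delta}w_0+v(t)$ with $w_0=\chi\,y|_{t=0}$ and $v(t)=\int_0^t e^{(t-\sigma)\Delta}F(\sigma)\,d\sigma$. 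The key point is that $v$ is smooth near $x_0$: for $x\in V$ and $z\in\mathrm{supp}\,\nabla\chi$ one has $|x-z|\ge d>0$, so the Gaussian kernel $G(t-\sigma,x-z)$ and all its $x$-derivatives stay bounded (in fact decay super-exponentially as $\sigma\to t$), whence $v\in C^\infty(V\times[0,T))$ and $\partial_t^\alpha(t^\beta v)\in L^2((0,T);H^{N}(V))$ for every $N$. Thus $v$ contributes to neither condition near $x_0$, and the whole matter reduces to the free flow $e^{t\Delta}w_0$.

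Finally I would conclude both implications. For $(ii)\Rightarrow(i)$: $w_0\in H^s(\mathbb{R}^n)$, so the model estimate gives $\partial_t^\alpha(t^\beta e^{t\Delta}w_0)\in L^2((0,T);H^{s'}(\mathbb{R}^n))$, and adding the smooth $v$ yields $\partial_t^\alpha(t^\beta y)\in H^{0,\,s'}$-type local control near $x_0$. The delicate point — what I regard as the main obstacle — is $(i)\Rightarrow(ii)$: hypothesis $(i)$ furnishes only local $H^{s'}(V)$ control of $e^{t\Delta}w_0$, while the model equivalence is global, and $w_0$ may be singular on $\mathrm{supp}\,\chi\setminus V$, so one cannot feed local data into the global estimate directly. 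I would resolve this by a second cutoff: choose $\psi\in C_0^\infty(V)$ with $\psi\equiv1$ near $x_0$, apply the same Duhamel decomposition to $u:=e^{t\Delta}w_0$ to write $\psi u=e^{t\Delta}(\psi w_0)+(\text{smooth near }x_0)$, note $\partial_t^\alpha(t^\beta(\psi u))=\psi\,\partial_t^\alpha(t^\beta u)\in L^2((0,T);H^{s'}(\mathbb{R}^n))$ globally, subtract the smooth error, and invoke the reverse model estimate to get $\psi w_0\in H^s(\mathbb{R}^n)$, i.e. $y|_{t=0}\in H^s$ near $x_0$. The remaining work is technical bookkeeping: justifying the Duhamel representation for distributional solutions, the convergence of the off-diagonal kernel integrals as $\sigma\to0$ and $\sigma\to t$, and the irrelevance of low frequencies. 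The genuine content is the sharp spectral asymptotic with its strict positivity, combined with the pseudolocal smoothing that lets the two cutoffs isolate the behavior at $x_0$.
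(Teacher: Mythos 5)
Your direction $(ii)\Rightarrow(i)$ is sound and is essentially the paper's Step~1 transplanted from the Dirichlet semigroup to the free one: your Plancherel computation of $\int_0^T|\partial_t^\alpha(t^\beta e^{-t|\xi|^2})|^2\,dt$, with its scaling and strict positivity, is the Fourier-side twin of the eigenfunction identity in Lemma \ref{20231210-yb-lemma-IdentityForWeightedHeatSolutions}, and your off-diagonal Gaussian decay plays the role that hypoellipticity plays in Lemma \ref{20230524-yb-lemma-PropagationOfSmoothnessForHeatSolutions}. Your handling of low frequencies via compact support is also fine.

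The gap is in $(i)\Rightarrow(ii)$, precisely at ``subtract the smooth error, and invoke the reverse model estimate.'' Write $\psi u=e^{t\Delta}(\psi w_0)+E$ with $E(t)=\int_0^t e^{(t-\sigma)\Delta}G(\sigma)\,d\sigma$ and $G=-[\Delta,\psi]u$ supported in $\operatorname{supp}\nabla\psi$. The reverse model estimate is a \emph{global} statement: to conclude $\psi w_0\in H^s(\mathbb{R}^n)$ you need $\partial_t^\alpha\bigl(t^\beta e^{t\Delta}(\psi w_0)\bigr)\in L^2((0,T);H^{s'}(\mathbb{R}^n))$ on all of $\mathbb{R}^n$, hence you must control $\partial_t^\alpha(t^\beta E)$ globally in that norm --- in particular on a neighborhood of $\operatorname{supp}\nabla\psi$, where pseudolocality gives you nothing. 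There, $G(\sigma)=-(\Delta\psi)u(\sigma)-2\nabla\psi\cdot\nabla u(\sigma)$ is, as $\sigma\to0^+$, only as regular as $\nabla w_0$, i.e.\ a priori only $H^{\sigma_0-1}$ for some very negative $\sigma_0$ determined by the order of the distribution $w_0$; maximal parabolic regularity then puts $E$ in $L^2H^{\sigma_0+1}$ near $\operatorname{supp}\nabla\psi$ and no better, which is far below $H^{s'}$ in general. The single-pass subtraction therefore only yields $\psi w_0\in H^{\min(s,\,\sigma_0+1)}$, and the second cutoff merely relocates the problem from $\operatorname{supp}\nabla\chi$ to $\operatorname{supp}\nabla\psi$. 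Hypothesis $(i)$ cannot rescue this directly, since it controls only $\partial_t^\alpha(t^\beta u)$ and not $u(\sigma)$ itself as $\sigma\to0$, while the weighted estimate for $E$ requires control of $\partial_t^\alpha(t^k G)$ for all $k\le\beta$ --- regularity information on $y|_{t=0}$ that is exactly what is to be proved. The paper closes this circle by a bootstrap: starting from $y_0\in H^{s-m_0}_{loc}$ (valid for any distribution), it uses the already-proven forward implication to control the commutator term at level $s-m$, weighted maximal regularity (its display \eqref{20231208-yb-WeightedRegularityForHeatEquation}) to place the Duhamel correction at level $s-m+2$, and the spectral identity to conclude $y_0\in H^{s-m+1}_{loc}$, gaining one derivative per iteration. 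Your architecture can be repaired the same way, but that iteration is the essential missing step, not optional bookkeeping.
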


The following result is a consequence of Proposition \ref{20231210-yb-proposition-ReverseRegularityForSolutionsOFHeatEquation}.

\begin{corollary}\label{20240401-yubiao-Corollay-ComparableAtInitialTimeForHeatSolutions}
    Let $U\subset \Omega$ be an open and nonempty subset, and let $\beta\in \mathbb N$.
     Suppose that $y_0 \in \cup_{k\in\mathbb R}\mathcal{H}^k$ satisfies $y_0 \in H^s_{loc}( U )$ for some $s\in \mathbb R$.   Let
    \begin{equation}\label{20240401-yb-WeightedSolutionsOfHeatEquation}
        f_{\beta}(t,x) :=  \chi_{(0,+\infty)} (t) t^{\beta} (e^{t^+ \Delta_D }y_0)(x),
        ~(t,x)   \in  \mathbb R  \times  \Omega.
    \end{equation}
 Then,  for each nonnegative integer $\alpha$ with $\alpha \leq \beta$,
 \begin{align}\label{20240402-yubiao-ContinuityAtInitialTimeForHeatSolution}
     f_{\beta}\in  H^{\alpha}(\mathbb R; H^{s+1 + 2(\beta-\alpha)}_{loc} (U) )
     \cap C^{\infty}( (\mathbb R \setminus\{0\}) \times \Omega ).
\end{align}
\end{corollary}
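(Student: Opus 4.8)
The plan is to read the corollary off Proposition~\ref{20231210-yb-proposition-ReverseRegularityForSolutionsOFHeatEquation}, applied to the solution generated by the heat semigroup, and then to promote the regularity on the half-line $(0,\infty)$ to regularity on all of $\mathbb{R}$ by using that the weight $t^\beta$ suppresses the trace at $t=0$ up to order $\beta$. First I would set $y(t,\cdot):=e^{t\Delta_D}y_0$ for $t\ge 0$. Since $y_0\in\mathcal H^k$ for some $k$, the map $t\mapsto y(t)$ lies in $C([0,\infty);\mathcal H^k)\subset C([0,T);\mathcal D'(U))$, solves $\partial_t y-\Delta y=0$ on $(0,\infty)\times U$, and satisfies $y|_{t=0}=y_0\in H^s_{loc}(U)$. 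Hence statement~$(ii)$ of Proposition~\ref{20231210-yb-proposition-ReverseRegularityForSolutionsOFHeatEquation} holds, and its implication $(ii)\Rightarrow(i)$, applied with an arbitrary derivative order $k\in\mathbb N$ in place of $\alpha$, gives
\[
    \partial_t^k\bigl(t^\beta y\bigr)\in L^2\bigl((0,T);H^{\,s+2(\beta-k)+1}_{loc}(U)\bigr)\qquad\text{for every }T>0\text{ and every }k\in\mathbb N .
\]

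Fix $\alpha\le\beta$ and put $m:=s+1+2(\beta-\alpha)$. For each $k\le\alpha$ one has $s+2(\beta-k)+1\ge m$, so the embedding $H^{\,s+2(\beta-k)+1}_{loc}(U)\hookrightarrow H^{m}_{loc}(U)$ places every $\partial_t^k(t^\beta y)$ with $k\le\alpha$ in $L^2((0,T);H^m_{loc}(U))$; thus $g:=t^\beta y\in H^\alpha((0,T);H^m_{loc}(U))$ for every $T>0$, and on $(0,\infty)$ it agrees with $f_\beta$. Near $t=+\infty$ no difficulty arises: writing $y(t)=e^{(t-1)\Delta_D}\bigl(e^{\Delta_D}y_0\bigr)$ for $t\ge1$ with $e^{\Delta_D}y_0\in\bigcap_{l}\mathcal H^{l}$, the exponential decay $\|e^{(t-1)\Delta_D}\|_{\mathcal L(\mathcal H^{l})}\lesssim e^{-\lambda_1(t-1)}$ together with the identity $\partial_t^{j}y=\Delta_D^{j}y$ shows that each $\partial_t^k(t^\beta y)$ with $k\le\alpha$ lies in $L^2((1,\infty);\mathcal H^{l})$ for every $l$, hence in $L^2((1,\infty);H^m_{loc}(U))$.

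The crux, and the only place where the hypothesis $\alpha\le\beta$ is consumed, is the passage across $t=0$. Since $f_\beta\equiv0$ for $t<0$, I must check that the zero-extension of $g$ from $(0,\infty)$ to $\mathbb R$ belongs to $H^\alpha$ in time near the origin, i.e.\ that no Dirac contributions appear under differentiation; by the standard gluing criterion this amounts to the vanishing of the traces $\partial_t^k g(0^+)$ in $H^m_{loc}(U)$ for $k=0,\dots,\alpha-1$. These traces exist because $g\in H^\alpha((0,T);H^m_{loc}(U))\hookrightarrow C^{\alpha-1}([0,T];H^m_{loc}(U))$. To evaluate them I would use the equation to write, in $\mathcal D'(U)$,
\[
    \partial_t^k\bigl(t^\beta y\bigr)(t)=\sum_{j=0}^{\min(k,\beta)}\binom{k}{j}\frac{\beta!}{(\beta-j)!}\,t^{\beta-j}\,\Delta^{k-j}y(t),
\]
and let $t\to0^+$: since $y(t)\to y_0$ in $\mathcal D'(U)$ and $k<\beta$ forces every exponent $\beta-j\ge1$, each summand tends to $0$ in $\mathcal D'(U)$. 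As the $C^{\alpha-1}$–trace, viewed in $H^m_{loc}(U)\hookrightarrow\mathcal D'(U)$, must coincide with this $\mathcal D'(U)$–limit, all traces $\partial_t^k g(0^+)$ with $k\le\alpha-1\le\beta-1$ vanish. Combining the interior estimate, the tail bound and this matching yields $\partial_t^k f_\beta\in L^2(\mathbb R;H^m_{loc}(U))$ for all $k\le\alpha$, that is, $f_\beta\in H^\alpha(\mathbb R;H^{\,s+1+2(\beta-\alpha)}_{loc}(U))$.

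Finally, the smoothness assertion is immediate from the analyticity and smoothing of the heat semigroup: $f_\beta\equiv0$ on $(-\infty,0)\times\Omega$, while on $(0,\infty)\times\Omega$ one has $f_\beta=t^\beta e^{t\Delta_D}y_0$ with $e^{t\Delta_D}y_0\in\bigcap_{l}\mathcal H^{l}\hookrightarrow C^\infty(\Omega)$ for each $t>0$ and jointly $C^\infty$ in $(t,x)$ by interior parabolic regularity. Hence $f_\beta\in C^\infty((\mathbb R\setminus\{0\})\times\Omega)$. The single delicate point is the $t=0$ trace computation above; the remaining steps are routine bookkeeping with Sobolev embeddings and semigroup decay.
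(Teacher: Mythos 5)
Your proof is correct and follows essentially the same route as the paper's: apply Proposition \ref{20231210-yb-proposition-ReverseRegularityForSolutionsOFHeatEquation} for each $k\le\alpha$, use the decay of the heat semigroup for large $t$, and glue the zero extension across $t=0$ by showing the factor $t^{\beta}$ forces the traces $\partial_t^k f_{\beta}(0^+)$ to vanish for $k<\beta$. The only difference is that you spell out the trace computation explicitly via the Leibniz formula and $\partial_t y=\Delta y$, a step the paper dispatches with ``one can check.''
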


\begin{remark}
    The index $\alpha$ in Corollary \ref{20240401-yubiao-Corollay-ComparableAtInitialTimeForHeatSolutions} cannot be greater than $\beta$. The reason is as follows: for general initial data $y_0$, the left and right limits of the $\beta$-th time derivative of $f_{\beta}$ (in \eqref{20240401-yb-WeightedSolutionsOfHeatEquation}) at $t=0$ cannot be patched continuously.
\end{remark}

Next, we prove the above results. We start by proving Proposition \ref{2021218-yb-lemma-NonhomogenuousHeat}.

\begin{proof}[Proof of Proposition \ref{2021218-yb-lemma-NonhomogenuousHeat}]
We arbitrarily fix $r,s\in \mathbb N$.
    First, we show $(i)\Rightarrow (ii)$.  Suppose that $(i)$ holds.
Then, there is $\varepsilon>0$ and an open neighborhood $U_1$  of $x_0$ such that  $(t_0 - \varepsilon_1, t_0 + \varepsilon_1) \times U_1\subset W$ and
\begin{align*}
    y \in H^{r+1,s}( (t_0 - \varepsilon_1, t_0 + \varepsilon_1) \times U_1)  )
    \cap H^{r,s+2}( (t_0 - \varepsilon_1, t_0 + \varepsilon_1) \times U_1 ) ).
\end{align*}
   This implies
\begin{align*}
   \partial_t y, \Delta y \in H^{r,s}( (t_0 - \varepsilon_1, t_0 + \varepsilon_1) \times U_1).
\end{align*}
Since $(y,f)$ satisfies equation \eqref{20241218-yubiao-NonhomogenuousHeatEquation} over $(t_0 - \varepsilon_1, t_0 + \varepsilon_1) \times U_1$,
the above yields
\begin{align*}
   f = \partial_t y - \Delta y \in H^{r,s}( (t_0 - \varepsilon_1, t_0 + \varepsilon_1) \times U_1).
\end{align*}
Then, from the definition of $H^{r,s}_{loc}(t_0,x_0)$ in \eqref{yu-10-15-2} it follows that $f \in H^{r,s}_{loc}(t_0,x_0)$, that is, $(ii)$ is true.

Next, we show $(ii)\Rightarrow (i)$. We assume that $(ii)$
holds.
By  the definition of $H^{r,s}_{loc}(t_0,x_0)$ in \eqref{yu-10-15-2},
we see that to prove $(i)$, it  suffices to show that for each $\alpha \in \mathbb N$ and $\beta \in \mathbb N^n$ with $\alpha \leq r$ and $|\beta| \leq s$,
\begin{align}\label{20241219-yb-ReducedVersionInSufficiencyForNonhomogeneousCase}
    y_{\alpha,\beta} := \partial_t^{\alpha} \partial_x^{\beta} y  \in H^{1,0}_{loc}(t_0,x_0) \cap H^{0,2}_{loc}(t_0,x_0).
\end{align}
For this purpose, we arbitrarily fix $\alpha \in \mathbb N$ and
$\beta \in \mathbb N^n$ such that $\alpha \leq r$ and $|\beta| \leq s$.
By $(ii)$,  there is an  open neighborhood $U_2$ (of $x_0$), with a smooth boundary $\partial U_2$,  and  $\varepsilon_2 > 0$ such that
\begin{align*}
(t_0 - \varepsilon_2, t_0 + \varepsilon_2) \times U_2  \subset W
~\text{and}~
    f \in H^{r,s}( (t_0 - \varepsilon_2, t_0 + \varepsilon_2) \times U_2) ).
\end{align*}
This implies
\begin{align}\label{20241219-yb-ReducedNonhomogeneousTerm}
    \partial_t^{\alpha} \partial_x^{\beta} f
    \in  L^2(
        (t_0 - \varepsilon_2, t_0 + \varepsilon_2) \times U_2)
    ).
\end{align}
In addition, since $(y,f)$ satisfies \eqref{20241218-yubiao-NonhomogenuousHeatEquation} and $y_{\alpha,\beta} = \partial_t^{\alpha} \partial_x^{\beta} y$ (see \eqref{20241219-yb-ReducedVersionInSufficiencyForNonhomogeneousCase}),
we have
\begin{align}\label{20241219-yb-ReducedHeatEquationWithNonhomogeneousTerm}
    \partial_t y_{\alpha,\beta}   -  \Delta y_{\alpha,\beta}
    =  \partial_t^{\alpha} \partial_x^{\beta} f
    ~\text{over}~
     (t_0 - \varepsilon_2, t_0 + \varepsilon_2) \times U_2.
\end{align}

Now,  we will use \eqref{20241219-yb-ReducedHeatEquationWithNonhomogeneousTerm} and \eqref{20241219-yb-ReducedNonhomogeneousTerm} to prove
\begin{align}\label{20241219-yb-RegularityOfReducedSolution}
    y_{\alpha,\beta}
    \in  H^1_{loc}( (t_0 - \varepsilon_2, t_0 + \varepsilon_2);  L^2_{loc}(U_2) )
    \cap   L^2_{loc}((t_0 - \varepsilon_2, t_0 + \varepsilon_2);  H^2_{loc}(U_2)).
\end{align}
Once this is  done, \eqref{20241219-yb-ReducedVersionInSufficiencyForNonhomogeneousCase}  follows at once and
the proof of $(ii)\Rightarrow (i)$ is finished.

The remainder is to show \eqref{20241219-yb-RegularityOfReducedSolution}. We arbitrarily take a subdomain $V \Subset U_2$ and a
cut-off function $\rho \in C_0^{\infty}(U_2)$ with $\rho \equiv 1$ over $V$. Note that the boundary $\partial U_2$ is smooth and bounded.
Then, by \eqref{20241219-yb-ReducedNonhomogeneousTerm} and \cite[Theorem 5, Page 382]{Evans-2010-AMS}, there is a function
\begin{align}\label{20241220-yb-ConstructedHeatSolution}
 \hat y
    \in  H^1( (t_0 - \varepsilon_2, t_0 + \varepsilon_2);  L^2(U_2) )
    \cap   L^2((t_0 - \varepsilon_2, t_0 + \varepsilon_2);  H^2(U_2))
\end{align}
such that
\begin{align*}
\left\{
    \begin{array}{ll}
     \partial_t \hat y  -  \Delta  \hat  y  =  \rho \partial_t^{\alpha} \partial_x^{\beta} f
     &~\text{in}~
     (t_0 - \varepsilon_2, t_0 + \varepsilon_2) \times U_2,
     \\
     \hat y  = 0  &~\text{on}~
     (t_0 - \varepsilon_2, t_0 + \varepsilon_2) \times \partial U_2,
     \\
     \hat y|_{t=t_0 - \varepsilon_2}  = 0  &~\text{in}~  U_2.
    \end{array}
\right.
\end{align*}
Since $\rho \equiv 1$ over $V$, the above, combined with \eqref{20241219-yb-ReducedHeatEquationWithNonhomogeneousTerm},  implies
\begin{align*}
    \partial_t (y_{\alpha,\beta} - \hat y)   - \Delta (y_{\alpha,\beta} - \hat y) = 0
    ~\text{over}~
    (t_0 - \varepsilon_2, t_0 + \varepsilon_2) \times V.
\end{align*}
At the same time, since the heat operator $\partial_t  - \Delta $ is hypoelliptic (see \cite[Definition 11.1.2 and (iii) and (iv) of Theorem 11.1.1]{Hormander-2}),  we find from the last equality that
\begin{align*}
    y_{\alpha,\beta} - \hat y   \in  C^{\infty} ( (t_0 - \varepsilon_2, t_0 + \varepsilon_2) \times V).
\end{align*}
Because $V\Subset U_2$ was arbitrarily taken, the above, along with \eqref{20241220-yb-ConstructedHeatSolution}, leads to \eqref{20241219-yb-RegularityOfReducedSolution}. This
completes the proof of Proposition \ref{2021218-yb-lemma-NonhomogenuousHeat}.
\end{proof}


To prove Proposition \ref{20231210-yb-proposition-ReverseRegularityForSolutionsOFHeatEquation}, we need the following two lemmas. The first lemma is about a global regularity estimate for the heat semigroup.

\begin{lemma}\label{20231210-yb-lemma-IdentityForWeightedHeatSolutions}
Let $y_0 \in \cup_{k\in\mathbb R}\mathcal{H}^k$ and $s\in \mathbb R$. Let $\alpha,\beta\in \mathbb N$. Then the following two statements are equivalent:
\begin{itemize}[leftmargin=4em]
    \item[$(i)$]  The function
    $t\mapsto \partial_t^{\alpha} (t^{\beta} e^{t \Delta_D} y_0)$ ($t\geq 0$) is in the space $L^2(\mathbb{R}^+; \mathcal H^{s+2(\beta-\alpha)+1})$.

    \item[$(ii)$]   $y_0\in \mathcal{H}^{s}$.
\end{itemize}
Moreover, if either $(i)$ or $(ii)$ holds, then
\begin{equation}\label{20231210-yb-EstimateOnSolutionsOfHeatEquation}
    \int_{\mathbb{R}^+}  \| \partial_t^{\alpha} (t^{\beta} e^{t \Delta_D} y_0)\|_{\mathcal H^{s+2(\beta-\alpha)+1}}^2
    dt
=  \left(
    \int_{\mathbb{R}^+} | \partial_t^{\alpha} (t^{\beta} e^{-t}) |^2 dt
    \right)
    \|y_0\|_{\mathcal H^s}^2.
\end{equation}
\end{lemma}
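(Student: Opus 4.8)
The plan is to diagonalize everything through the spectral decomposition of $-\Delta_D$, reducing the $\mathcal H^{s}$-valued estimate to a one-parameter family of scalar integrals indexed by the eigenvalues $\lambda_i$. Since $y_0\in\cup_{k\in\mathbb R}\mathcal H^k$, write $y_0=\sum_{i=1}^{+\infty}a_i e_i$, convergent in some $\mathcal H^k$. The heat semigroup acts diagonally, $e^{t\Delta_D}y_0=\sum_i a_i e^{-\lambda_i t}e_i$, and the smoothing/analyticity of $\{e^{t\Delta_D}\}_{t\geq0}$ lets me differentiate term by term for $t>0$, so that
\begin{equation*}
\partial_t^{\alpha}\big(t^{\beta}e^{t\Delta_D}y_0\big)=\sum_{i=1}^{+\infty}a_i\,\partial_t^{\alpha}\big(t^{\beta}e^{-\lambda_i t}\big)\,e_i .
\end{equation*}
Computing the $\mathcal H^{s+2(\beta-\alpha)+1}$-norm coordinatewise against the orthogonal basis $\{e_i\}$ then gives, for each fixed $t>0$,
\begin{equation*}
\big\|\partial_t^{\alpha}(t^{\beta}e^{t\Delta_D}y_0)\big\|_{\mathcal H^{s+2(\beta-\alpha)+1}}^2=\sum_{i=1}^{+\infty}|a_i|^2\,\lambda_i^{s+2(\beta-\alpha)+1}\,\big|\partial_t^{\alpha}(t^{\beta}e^{-\lambda_i t})\big|^2 .
\end{equation*}

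The key computation is the scalar integral $\int_{\mathbb R^+}|\partial_t^{\alpha}(t^{\beta}e^{-\lambda_i t})|^2\,dt$. Substituting $u=\lambda_i t$ and using $\partial_t=\lambda_i\partial_u$, $t^{\beta}=\lambda_i^{-\beta}u^{\beta}$ yields $\partial_t^{\alpha}(t^{\beta}e^{-\lambda_i t})=\lambda_i^{\alpha-\beta}\,(\partial_u^{\alpha}(u^{\beta}e^{-u}))|_{u=\lambda_i t}$, hence
\begin{equation*}
\int_{\mathbb R^+}\big|\partial_t^{\alpha}(t^{\beta}e^{-\lambda_i t})\big|^2\,dt=\lambda_i^{2(\alpha-\beta)-1}\int_{\mathbb R^+}\big|\partial_t^{\alpha}(t^{\beta}e^{-t})\big|^2\,dt=:\lambda_i^{2(\alpha-\beta)-1}C .
\end{equation*}
I then integrate the previous norm identity over $t\in\mathbb R^+$ and interchange sum and integral, which is legitimate by Tonelli's theorem since every term is nonnegative, so the equality holds in $[0,+\infty]$ with no prior convergence hypothesis. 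The crucial bookkeeping is that the exponents of $\lambda_i$ telescope: $\big(s+2(\beta-\alpha)+1\big)+\big(2(\alpha-\beta)-1\big)=s$. This produces exactly \eqref{20231210-yb-EstimateOnSolutionsOfHeatEquation}, namely
\begin{equation*}
\int_{\mathbb R^+}\big\|\partial_t^{\alpha}(t^{\beta}e^{t\Delta_D}y_0)\big\|_{\mathcal H^{s+2(\beta-\alpha)+1}}^2\,dt=C\sum_{i=1}^{+\infty}|a_i|^2\lambda_i^{s}=C\,\|y_0\|_{\mathcal H^s}^2 .
\end{equation*}

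With the identity established, the equivalence $(i)\Leftrightarrow(ii)$ is immediate once $0<C<+\infty$: statement $(i)$ is the finiteness of the left-hand side and $(ii)$ is the finiteness of $\|y_0\|_{\mathcal H^s}^2$, so a strictly positive finite $C$ makes them equivalent. To verify $C\in(0,+\infty)$, note that $\partial_t^{\alpha}(t^{\beta}e^{-t})=p(t)e^{-t}$ for a polynomial $p$, so $|p(t)|^2e^{-2t}$ is integrable on $\mathbb R^+$, giving finiteness; and by the Leibniz rule the top-degree term of $p$ is $(-1)^{\alpha}t^{\beta}$ (obtained by differentiating only the exponential), so $p\not\equiv0$ and $C>0$.

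The main obstacle I anticipate is purely technical: justifying the term-by-term differentiation and the coordinatewise norm formula for $t>0$. This rests on the analyticity of the heat semigroup, which ensures $e^{t\Delta_D}y_0\in\cap_{m\in\mathbb R}\mathcal H^m$ for every $t>0$, with $\partial_t^{j}e^{t\Delta_D}y_0=\Delta_D^{j}e^{t\Delta_D}y_0=\sum_i a_i(-\lambda_i)^{j}e^{-\lambda_i t}e_i$ converging in each $\mathcal H^m$ (the factor $e^{-\lambda_i t}$ dominates every power of $\lambda_i$); combined with the Leibniz expansion of $\partial_t^{\alpha}(t^{\beta}\,\cdot\,)$ this secures the termwise expression, and in particular shows the pointwise-in-$t$ norm is finite for each $t>0$, so that $(i)$ is genuinely a statement about integrability in time. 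Everything else — the scaling substitution and the exponent cancellation — is elementary, and working with nonnegative quantities lets Tonelli's theorem bypass any delicate convergence issues.
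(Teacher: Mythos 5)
Your proof is correct and follows essentially the same route as the paper: spectral decomposition of $y_0$ in the eigenbasis of $-\Delta_D$, termwise differentiation, the scaling substitution reducing each modal integral to $\lambda_i^{2(\alpha-\beta)-1}\int_{\mathbb{R}^+}|\partial_t^{\alpha}(t^{\beta}e^{-t})|^2\,dt$, and the exponent cancellation yielding \eqref{20231210-yb-EstimateOnSolutionsOfHeatEquation}. Your explicit Tonelli justification and the verification that the constant $C$ is finite and strictly positive are details the paper leaves implicit, but they do not change the argument.
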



\begin{proof}
First of all, since $y_0\in \cup_{k\in\mathbb R}\mathcal{H}^k$, it is clear that $y_0 \in \mathcal H^m$ for some $m\in \mathbb R$.
Thus,  we can write $y_0 = \sum_{i\geq1} y_{0,i} e_i$, with
$\big\{ \lambda_i^{ m/2 } y_{0,i} \big\}_{i\geq 1} \in l^2$. Then we have
$$
    \partial_t^{\alpha} (t^{\beta} e^{t \Delta_D} y_0)
      = \sum_{i\geq 1}  \partial_t^{\alpha} (t^{\beta} e^{ -\lambda_i t } )
       y_{0,i}  e_i,
       ~ t\geq 0.
$$

Next, we aim to show that $(i) \Rightarrow (ii)$. Suppose that $(i)$ is true. Then we have
\begin{align*}
        \int_{\mathbb{R}^+}  \| \partial_t^{\alpha} (t^{\beta} e^{t \Delta_D} y_0)\|_{\mathcal H^{s+2(\beta-\alpha)+1}}^2
        dt
        =& \sum_{i\geq 1}   \left(
            \int_{\mathbb{R}^+}
         | \partial_t^{\alpha} (t^{\beta} e^{- \lambda_i t}) |^2 dt
        \right)
        y_{0,i}^2 \lambda_i^{s+2(\beta-\alpha)+1}
        <  + \infty.
\end{align*}
Meanwhile, one can directly check that for each $\lambda>0$,
\begin{align*}
        \int_{\mathbb{R}^+}
        | \partial_t^{\alpha} (t^{\beta} e^{- \lambda t}) |^2 dt
      = \lambda^{-2(\beta-\alpha) -1}
         \int_{\mathbb{R}^+} | \partial_t^{\alpha} (t^{\beta} e^{-t}) |^2 dt.
\end{align*}
The above two equalities yield
\begin{align*}
\int_{\mathbb{R}^+}
        | \partial_t^{\alpha} (t^{\beta} e^{- t}) |^2 dt
        \sum_{i\geq 1} y_{0,i}^2 \lambda_i^{s}
        =
        \int_{\mathbb{R}^+}  \| \partial_t^{\alpha} (t^{\beta} e^{t \Delta_D} y_0)\|_{\mathcal H^{s+2(\beta-\alpha)+1}}^2
        dt
        < + \infty,
    \end{align*}
    which leads to  \eqref{20231210-yb-EstimateOnSolutionsOfHeatEquation} and
    shows  $(ii)$.

    The proof of $(ii)\Rightarrow (i)$ can be done in a similar way.  This completes the proof.
\end{proof}



The second lemma mentioned above concerns solutions of local infinite differentiability for the classical heat equation. It is a modified version of \cite[Theorem 7, Page 388]{Evans-2010-AMS} and can be proven by small modifications. The proof is put in the Appendix for the sake of the completeness.

\begin{lemma}\label{20230524-yb-lemma-PropagationOfSmoothnessForHeatSolutions}
Let $T>0$ and $U$ be an open nonempty subset of $\mathbb R^n$. Let $y_0 \in  C^{\infty}(U)$ and $f\in C^{\infty}([0,T) \times U)$.
Suppose that  $y \in C([0,T); \mathcal{D}'(U) )$ satisfies
\begin{align}\label{20241220-yb-InitialDataAndNonhomogeneousTerm}
    \partial_t y  - \Delta y  =  f  ~\text{in}~  (0,T) \times U;
    ~~ y|_{t=0} = y_0  ~\text{in}~  U.
\end{align}
Then, we have $y \in C^{\infty}([0,T) \times U)$.
\end{lemma}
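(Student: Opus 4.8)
The plan is to localize and then reduce everything to the interior situation where hypoellipticity of the heat operator applies, with the only genuine work occurring at the initial slice $\{t=0\}\times U$. Since smoothness is a local property, I fix an arbitrary point $(t_*,x_*)\in[0,T)\times U$ and argue in a neighborhood of it. If $t_*>0$, then $y$ is a distribution on the open set $(0,T)\times U$ solving $\partial_t y-\Delta y=f$ with $f\in C^\infty$, and because the heat operator $\partial_t-\Delta$ is hypoelliptic (see \cite{Hormander-2}), $y$ is smooth near $(t_*,x_*)$. Thus the entire difficulty is the case $t_*=0$.

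For $t_*=0$, I would first kill the data to infinite order at $t=0$. Fix a neighborhood $V\Subset U$ of $x_*$ and set $g_0:=y_0$ and $g_k:=\Delta g_{k-1}+\partial_t^{k-1}f(0,\cdot)$ for $k\ge1$; each $g_k\in C^\infty(U)$, and these are exactly the time-jets at $t=0$ that any smooth solution must possess, obtained by repeatedly differentiating the equation in $t$. By a parametrized Borel lemma (a cut-off power series $\sum_k g_k(x)\,t^k\,\chi(t/\varepsilon_k)/k!$ with $\varepsilon_k\downarrow0$ chosen so that the series converges in $C^\infty$ over $[0,T)\times V$), construct $w\in C^\infty([0,T)\times V)$ with $\partial_t^k w(0,\cdot)=g_k$ for all $k$. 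Then $w|_{t=0}=y_0$, and, by the definition of the $g_k$, the function $h:=f-(\partial_t w-\Delta w)\in C^\infty([0,T)\times V)$ vanishes to infinite order at $t=0$. Setting $v:=y-w$ gives $v\in C([0,T);\mathcal D'(V))$ with $v|_{t=0}=0$ and $\partial_t v-\Delta v=h$.

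Next I would extend across $t=0$ by zero. Because $v|_{t=0}=0$ and $h$ vanishes to infinite order at $t=0$, the zero extensions $\bar v,\bar h$ to $(-\delta,T)\times V$ satisfy $\partial_t\bar v-\Delta\bar v=\bar h$ in the distributional sense on this open set: for $t>0$ it is the equation for $v$, for $t<0$ both sides vanish, and no singular contribution arises at $t=0$ since the trace of $v$ there is $0$ and $\bar h$ is genuinely $C^\infty$ across $t=0$ (the infinite-order flatness of $h$ makes its zero extension smooth). Now $\bar v$ is a distribution on the open set $(-\delta,T)\times V$ solving the heat equation with $C^\infty$ right-hand side $\bar h$, so hypoellipticity again yields $\bar v\in C^\infty((-\delta,T)\times V)$; in particular $v=\bar v|_{[0,T)\times V}\in C^\infty([0,T)\times V)$. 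Since $w$ is smooth as well, $y=v+w$ is smooth near $(0,x_*)$. Combining this with the interior case over all $(t_*,x_*)$ gives $y\in C^\infty([0,T)\times U)$.

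The main obstacle, though essentially technical, is the step at $t=0$: one must (a) carry out the parametrized Borel construction so that $w$ is genuinely $C^\infty$ up to $t=0$ with the prescribed $x$-dependent time-jets $g_k$, working on relatively compact $V\Subset U$ so that the $\varepsilon_k$ can be controlled uniformly; and (b) verify rigorously that the zero extension introduces no distributional jump at $t=0$, that is, that $\partial_t\bar v$ carries no term of the form $\delta_{\{t=0\}}\otimes v|_{t=0}$, which is precisely where $v|_{t=0}=0$ and the infinite-order vanishing of $h$ enter. Once these are in place, hypoellipticity finishes the argument, which is the sense in which this is a "small modification'' of \cite[Theorem 7, Page 388]{Evans-2010-AMS}.
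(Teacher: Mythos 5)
Your argument is correct, and it reaches the conclusion by a genuinely different device than the paper at the one place where something must be done, namely the initial slice. The paper also reduces to a zero-initial-data situation and finishes with zero extension across $t=0$ plus hypoellipticity of $\partial_t-\Delta$, but it manufactures the smooth comparison function $\hat y$ by invoking the solvability theory for the initial--boundary value problem on a ball (Evans, Theorem 5, p.~382) applied to the cut-off data $\rho y_0$, $\rho f$; since $\rho\equiv 1$ on the smaller ball, the difference $y-\hat y$ then solves the \emph{homogeneous} heat equation there with zero initial trace, and its zero extension is handled exactly as you handle $\bar v$. You instead build the comparison function $w$ by a parametrized Borel construction matching the formal time-jets $g_k$ forced by the equation, so that the residual forcing $h$ is merely flat at $t=0$ rather than identically zero; hypoellipticity tolerates this without any extra cost. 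What the paper's route buys is that it outsources all the delicate corner analysis to a quotable existence theorem (the compatibility conditions at $\{0\}\times\partial B_{2r}$ hold trivially because the cut-off data vanish near the boundary), at the price of needing that theorem. What your route buys is self-containedness and independence from any boundary-value solvability theory --- only Borel's lemma with parameters and hypoellipticity are used --- at the price of having to carry out the jet computation $\partial_t^k(\partial_t w-\Delta w)(0,\cdot)=g_{k+1}-\Delta g_k=\partial_t^k f(0,\cdot)$ and the Borel estimates on $V\Subset U$ yourself. The one step you must not skimp on is the verification that $\partial_t\bar v$ acquires no term $\delta_{\{t=0\}}\otimes v|_{t=0}$; you correctly identify that this follows from $v\in C([0,T);\mathcal D'(V))$ with $v(0)=0$ (truncate the time integral at $t=\epsilon$, integrate by parts, and let $\epsilon\to 0^+$), which is the same computation the paper waves through as ``direct computations.''
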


Now we are in a position to prove Proposition \ref{20231210-yb-proposition-ReverseRegularityForSolutionsOFHeatEquation}, based on Lemmas \ref{20231210-yb-lemma-IdentityForWeightedHeatSolutions} and \ref{20230524-yb-lemma-PropagationOfSmoothnessForHeatSolutions}.

\begin{proof}[Proof of Proposition \ref{20231210-yb-proposition-ReverseRegularityForSolutionsOFHeatEquation}]
 We arbitrarily fix $\alpha,\beta\in \mathbb N$ and $s\in \mathbb R$.
Without loss of generality, we assume
\begin{align}\label{20241223-yb-AssumptionWithoutLossOfGenerality}
    U \subset \Omega.
\end{align}
Otherwise, we can shrink $U$ and move it into $\Omega$ by translations; this will not
harm the conclusion to be proven because it is about local regularities. The reason for assuming \eqref{20241223-yb-AssumptionWithoutLossOfGenerality} is to use Lemma \ref{20231210-yb-lemma-IdentityForWeightedHeatSolutions} in a convenient way.

Write $y_0 := y|_{t=0}$.  The proof is divided into the following two steps.

\vskip 5pt
\noindent\emph{
Step 1. We prove $(ii)\Rightarrow (i)$.
}

Suppose that $y_0 \in H^s_{loc}( U )$.  We arbitrarily take an open nonempty subset $V_1 \Subset U$. Let $\rho_1 \in C_0^\infty(U)$ be a cutoff function  such that
    $\rho_1  = 1$  over $V_1$.
Since $y_0 \in H^s_{loc}( U )$ and $U \subset \Omega$ (see \eqref{20241223-yb-AssumptionWithoutLossOfGenerality}), we see from Lemma \ref{20241230-yb-lemma-InterEquivalenceBetweenTwoSobolevSpaces} in the Appendix that $\rho_1 y_0 \in  \mathcal{H}^s$ (when extended to $\Omega$).
    Then, according to Lemma  \ref{20231210-yb-lemma-IdentityForWeightedHeatSolutions},
    \begin{equation}\label{20231209-yb-SobolevRegularityOfWeightedSolution}
        \partial_t^{\alpha}[ t^{\beta} e^{t\Delta_D} (\rho_1 y_0) ]  \in  L^2( (0,+\infty);  \mathcal H^{s+2(\beta-\alpha)+1}).
    \end{equation}
Meanwhile, since $(1-\rho_1)y_0 =0 $ on $V_1$, it follows from \eqref{20241223-yb-HeatEquationForRegularityInvolvingIntialData} that the function $y - e^{t\Delta_D} (\rho_1 y_0)$ satisfies the following equation
\begin{align*}
    \partial_t z   - \Delta  z  = 0   ~\text{in}~   (0,T)  \times V_1;
    ~~z|_{t=0} =  (1-\rho_1)y_0 =0   ~\text{in}~ V_1.
\end{align*}
Thus, we can apply Lemma \ref{20230524-yb-lemma-PropagationOfSmoothnessForHeatSolutions} (with $(U, y_0, f)$ replaced by $(V_1, 0, 0)$) to obtain
    \begin{equation*}
        y - e^{t\Delta_D} (\rho_1 y_0)
     \in C^{\infty} ( [0,+\infty) \times V_1 ).
    \end{equation*}
    This, together with \eqref{20231209-yb-SobolevRegularityOfWeightedSolution}, yields
    \begin{equation*}
        \partial_t^{\alpha}( t^{\beta} y )   =
        \partial_t^{\alpha}[ t^{\beta} e^{t\Delta_D} (\rho_1 y_0) ]
        +   \partial_t^{\alpha}[ t^{\beta} (y - e^{t\Delta_D} (\rho_1 y_0)) ]
      \in L^2( (0,T) ; H^{s+2(\beta-\alpha)+1}_{loc}(V_1) ).
    \end{equation*}
    Because $V_1 \Subset U$ was arbitrarily chosen, the above implies
    $\partial_t^{\alpha}( t^{\beta} y )
      \in L^2( (0,T); H^{s+2(\beta-\alpha)+1}_{loc}( U ) )$,
    i.e., $(i)$ is true.

\vskip 5pt
\noindent\emph{
    Step 2. We prove $(i)\Rightarrow (ii)$.
}

   We assume that $(i)$ holds.
We will prove $(ii)$ by induction.
To this end, we arbitrarily take a subdomain $U_1 \Subset U$.
Since $y \in C([0,T); \mathcal{D}'(U))$, we have $y_0 \in \mathcal{D}'(U)$.
Then, one can directly check that (see Subsection \ref{20250203-DistributionsAndSobolevSpaces} in the Appendix)
\begin{align}\label{20250203-DistributionEmbeddedIntoSobolevSpaces}
    y_0 \in H^{s - m_0}_{loc}(U_1)
    ~~\text{for some}~~
    m_0 \in \mathbb N^+.
\end{align}
Let $m \in [1, m_0]$ be an integer. We show that if
\begin{align}\label{20231208-yb-SubOptimalRegularity-y0}
    y_0 \in H^{ s - m}_{loc}(U_1),
\end{align}
then
\begin{align}\label{2024023-yubiao-AimInInductionForNecessity}
    y_0 \in H^{s - m +1}_{loc}(U_1).
\end{align}
When this is done, one can conclude that $y_0 \in H^s_{loc}(U_1)$ by induction, which gives $y_0 \in H^s_{loc}(U)$ due to the arbitrariness of $U_1 \Subset U$.

    The rest is to show \eqref{2024023-yubiao-AimInInductionForNecessity}, based on \eqref{20231208-yb-SubOptimalRegularity-y0}.
    When  $k\in \{0,\ldots,\beta\}$, by \eqref{20231208-yb-SubOptimalRegularity-y0}, we apply the
    conclusion that $(ii)\Rightarrow (i)$  of this proposition (where $(s, \beta, U)$ is replaced by $(s-m,k, U_1)$)) to obtain
    \begin{align}\label{20231108-yb-RegularityForWeightedFuncitons}
        \partial_t^{\alpha} (t^k y)      \in  L^2( (0,T); H^{(s-m+1) +2(k-\alpha)}_{loc}( U_1 )),
    \end{align}
    where $ \partial_t^{\alpha} (t^k y)$ is viewed as  a function of $t$.

    Now, we arbitrarily take a cut-off function $\rho \in C_0^{\infty}(U_1)$.
    Note  that the function $\rho y$, when extended to $(0,T) \times \Omega$, solves the following heat equation with a non-homogeneous term:
    \begin{align}\label{20231210-yb-EquationInMainProof}
        \left\{
        \begin{array}{ll}
            \partial_t (\rho y) - \Delta (\rho y) = g:= (-\Delta \rho ) y - 2 \nabla \rho \cdot \nabla y  &\text{in}  ~ (0,T) \times \Omega,\\
            (\rho y) =0  &\text{on}  ~(0,T)  \times \partial\Omega,\\
            (\rho y)|_{t=0}  = \rho  y_0   &\text{in}   ~ \Omega.
        \end{array}
        \right.
    \end{align}
    Since $\rho \in C_0^{\infty}(U_1)$ and $U_1 \subset U \subset \Omega$, by \eqref{20231108-yb-RegularityForWeightedFuncitons}, the
    term $g$ in \eqref{20231210-yb-EquationInMainProof}, which is viewed as a function of $t$, satisfies
    \begin{align}\label{20231208-yb-WeightedRegularityFor-g}
        \partial_t^{\alpha} (t^k g) \in L^2( (0,T); \mathcal H^{(s-m)+2(k-\alpha)})
        ~\text{for each}~
        k\in \{0,\ldots,\beta\}.
    \end{align}

    Next, we claim that the  solution $y_1$ to the following equation
    \begin{align}\label{20230519-yb-EquationWithZeroIntialDataInMainProof}
        \left\{
        \begin{array}{ll}
            \partial_t y_1 - \Delta y_1 = g  &\text{in}  ~ (0,T) \times \Omega,\\
            y_1 = 0  &\text{on}  ~(0,T)  \times \partial\Omega,\\
            y_1|_{t=0} = 0   &\text{in}   ~\Omega
        \end{array}
        \right.
    \end{align}
    satisfies that for each $k\in \{0,\ldots,\beta\}$,
    \begin{align}\label{20231208-yb-WeightedRegularityForHeatEquation}
         \text{the function}~
         t\mapsto \partial_t^{\alpha} (t^k y_1)  ~(t\geq0)
         ~\text{is in}~
         L^2( (0,T); \mathcal H^{(s-m+2)+2(k-\alpha)} ).
    \end{align}
  We prove the above claim by induction. First, it follows  from \eqref{20230519-yb-EquationWithZeroIntialDataInMainProof} and \eqref{20231208-yb-WeightedRegularityFor-g} (where $k=0$) that
  \begin{align*}
       \partial_t (\partial_t^{\alpha} y_1)
     - \Delta (\partial_t^{\alpha} y_1)
   = \partial_t^{\alpha} (\partial_t y_1  - \Delta y_1)
   =  \partial_t^{\alpha} g
   ~~\text{in}~   L^2( (0,T); \mathcal{H}^{(s-m)+2(0-\alpha)} ).
  \end{align*}
  This, together with Lemma \ref{20241223-yb-lemma-SolutionsWithNonhomogeneousTermInFunctionalSetting} (where $y$ and $s$ are replaced by $\partial_t^{\alpha} y_1$ and $(s-m)+2(0-\alpha)$, respectively) in the Appendix, leads to \eqref{20231208-yb-WeightedRegularityForHeatEquation} with  $k=0$.
   Next, we suppose that there is  a positive integer $N<\beta$ so that the statement \eqref{20231208-yb-WeightedRegularityForHeatEquation} holds for  all $k < N$.
    To prove    \eqref{20231208-yb-WeightedRegularityForHeatEquation} with $k=N$, we first obtain from \eqref{20230519-yb-EquationWithZeroIntialDataInMainProof} that
 \begin{align*}
     \partial_t \left[
            \partial_t^{\alpha} (t^N y_1)
       \right]
     - \Delta \left[
                  \partial_t^{\alpha} (t^N y_1)
              \right]
     = \partial_t^{\alpha} (t^N g)
           +  N \partial_t^{\alpha} (t^{N-1} y_1).
 \end{align*}
 Then, according to \eqref{20231208-yb-WeightedRegularityFor-g} (where $k=N$) and \eqref{20231208-yb-WeightedRegularityForHeatEquation} (with $k=N-1$),  we have
 \begin{align*}
     \partial_t \left[
        \partial_t^{\alpha} (t^N y_1)
    \right]
    -  \Delta \left[
        \partial_t^{\alpha} (t^N y_1)
    \right]
    \in  L^2( (0,T); \mathcal{H}^{(s-m)+2(N-\alpha)} ).
 \end{align*}
 Therefore, by Lemma \ref{20241223-yb-lemma-SolutionsWithNonhomogeneousTermInFunctionalSetting} (where  $y$ and $s$ are replaced by $\partial_t^{\alpha} (t^N y_1)$ and $(s-m)+2(N-\alpha)$, respectively) in the Appendix, we get \eqref{20231208-yb-WeightedRegularityForHeatEquation} with $k=N$. Therefore,  \eqref{20231208-yb-WeightedRegularityForHeatEquation} is true by the induction.

    Now, because $\rho \in C_0^{\infty}(U_1)$  and $U_1 \subset U \subset \Omega$, it follows from  $(i)$ that
    \begin{align*}
         \partial_t^{\alpha} (t^{\beta} \rho y)
        \in L^2( (0,T); \mathcal{H}^{(s+1)+2(\beta-\alpha)}).
    \end{align*}
    Since $m\geq 1$, the above, together with \eqref{20231208-yb-WeightedRegularityForHeatEquation} (with $k=\beta$), implies
    \begin{align*}
        \partial_t^{\alpha} \left[
            t^{\beta} ( \rho y  - y_1 )
        \right]
        \in L^2( (0,T); \mathcal H^{(s-m+2)+2(\beta-\alpha)}),
    \end{align*}
    Then,    we see from \eqref{20231210-yb-EquationInMainProof} and \eqref{20230519-yb-EquationWithZeroIntialDataInMainProof}  that the function $\rho y  - y_1$ satisfies the following heat equation
    \begin{align*}
        \left\{
        \begin{array}{ll}
            \partial_t z - \Delta z = 0  &\text{in}  ~ (0,T) \times \Omega,\\
            z = 0  &\text{on}  ~(0,T)  \times \partial\Omega,\\
            z|_{t=0}= \rho y_0  &\text{in}   ~\Omega.
        \end{array}
        \right.
    \end{align*}
    Thus, we conclude that
    \begin{align*}
        \partial_t^{\alpha} \left[
            t^{\beta}  e^{t \Delta_D } (\rho y_0)
        \right]
        =
        \partial_t^{\alpha} \left[
            t^{\beta} ( \rho y  - y_1 )
        \right]
        \in L^2( (0,T); \mathcal H^{(s-m+2)+2(\beta-\alpha)}).
    \end{align*}
    This, along with the smoothing effect and the decay property of the heat semigroup, implies
    \begin{align*}
        \partial_t^{\alpha} \left[
        t^{\beta}  e^{t \Delta_D} (\rho y_0)
        \right]
        \in L^2(\mathbb{R}^+; \mathcal H^{(s-m+2)+2(\beta-\alpha)}),
    \end{align*}
     From this and Lemma \ref{20231210-yb-lemma-IdentityForWeightedHeatSolutions}, we obtain
        $\rho  y_0 \in \mathcal H^{s-m+1}$.
    Because $U_1 \subset U \subset \Omega$ and $\rho \in C_0^\infty(U_1)$ was arbitrarily taken, we see from Lemma \ref{20241230-yb-lemma-InterEquivalenceBetweenTwoSobolevSpaces} in the Appendix that $y_0 \in H^{s-m+1}_{loc}(U_1)$,
    i.e., \eqref{2024023-yubiao-AimInInductionForNecessity} is true.
    This completes the proof of Proposition \ref{20231210-yb-proposition-ReverseRegularityForSolutionsOFHeatEquation}.
    %
    %
    %
    %
\end{proof}

Finally, we present the proof of Corollary \ref{20240401-yubiao-Corollay-ComparableAtInitialTimeForHeatSolutions}.

\begin{proof}[Proof of Corollary \ref{20240401-yubiao-Corollay-ComparableAtInitialTimeForHeatSolutions}]
    Let $\alpha \in \mathbb N \cap [0,\beta]$. We arbitrarily fix a nonnegative integer $k\leq \alpha$. Then we apply Proposition \ref{20231210-yb-proposition-ReverseRegularityForSolutionsOFHeatEquation} (with $\alpha$ replaced by $k$) and the decay property for the heat semigroup to obtain
\begin{align*}
     \partial_t^{k} f_{\beta}|_{ \{t>0\}}
    = \partial_t^k (t^{\beta} e^{t \Delta_D}y_0)
    \in &  L^2 ((0,+\infty); H^{s+1 + 2(\beta-k)}_{loc} (U) )
        \nonumber\\
    \subset& L^2 ((0,+\infty); H^{s+1 + 2(\beta - \alpha)}_{loc} (U) ).
\end{align*}
This implies $f_{\beta}|_{\{ t>0\} }  \in H^{\alpha}
    \big(
        (0,+\infty); H^{s+1 + 2(\beta - \alpha)}_{loc} (U)
    \big)$.
Furthermore, because of the factor $t^{\beta}$ in $f_{\beta}$, one can check that when $k\in \mathbb N$ satisfies $k<\beta$,
\begin{align*}
    \lim_{t\rightarrow 0^+} \partial_t^k f_{\beta} (t) = 0
    ~\text{in}~    \cup_{m \in \mathbb R} \mathcal{H}^m.
\end{align*}
Meanwhile,  we note that $f_{\beta}|_{\{t<0\}} \equiv 0$. Thus, $f_{\beta}\in  H^{\alpha}(\mathbb R; H^{s+1 + 2(\beta-\alpha)}_{loc} (U) )$.
The function $f_{\beta}$ is also smooth outside $t=0$, due to the smoothing effect of the heat semigroup. Therefore,  \eqref{20240402-yubiao-ContinuityAtInitialTimeForHeatSolution} is true. This
finishes the proof of Corollary \ref{20240401-yubiao-Corollay-ComparableAtInitialTimeForHeatSolutions}.
\end{proof}

\section{Explicit  expression of solutions}
\label{section-ExplicitExpression}

 This section presents an explicit expression for solutions of  equation \eqref{20220510-yb-OriginalSystemWithTimeDelay} in terms of the semigroup $\{ e^{t \Delta_D }  \}_{t\geq 0}$. It plays an important role in the proofs of our main results.
Recall that $\Delta_D$ and $\mathcal{H}^s$ are introduced in \eqref{20241224-yb-LaplacianWithDiricihletBoundaryCondition} and \eqref{def-space-with-boundary-condition}, respectively.  The expression aforementioned is stated in the following theorem.

\begin{theorem}\label{20230510-yb-proposition-ExpressionForEvolutionWithTimeDelay}
Let $s \geq 0$. Let $y_0\in \mathcal{H}^s$ and $\phi\in L^2( (-\tau,0); \mathcal{H}^{s-1})$.
Then,  $y(t; y_0,\phi)|_{[0,+\infty)}$ belongs to the space $C([0,+\infty); \mathcal H^s)$,
and
satisfies
\begin{equation}\label{zhang-00-8-23-3}
    y(t; y_0,\phi)  =  \varPhi(t) y_0  +a
    \int_{-\tau}^{ \min\{t-\tau,0\}  }
    \varPhi(t-\tau-\gamma)   \phi(\gamma)  d\gamma,
    ~t\geq 0,
\end{equation}
where
\begin{equation}\label{zhang-00-8-23-2}
    \varPhi(t) := e^{t\Delta_D}   +  \sum_{j=1}^{+\infty}
    \frac{a^j}{j!}  (t-j\tau)^j
    \chi_{[j\tau,+\infty)}(t)   e^{(t-j\tau)^+ \Delta_D},
    ~t\geq 0.
\end{equation}
\end{theorem}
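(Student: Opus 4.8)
The plan is to pass to the variation-of-constants (Duhamel) form of the mild solution and to solve the resulting Volterra equation by iteration, matching the iterates with the summands of $\varPhi$ in \eqref{zhang-00-8-23-2}. Setting $\tilde y(r):=\phi(r)$ for $r\in(-\tau,0)$ and $\tilde y(r):=y(r)$ for $r\geq 0$, the mild solution satisfies the Duhamel identity
\[
    y(t)=e^{t\Delta_D}y_0+a\int_0^t e^{(t-\sigma)\Delta_D}\,\tilde y(\sigma-\tau)\,d\sigma,\qquad t\geq 0 .
\]
By linearity I would write $y=y^{\mathrm i}+y^{\mathrm h}$, where $y^{\mathrm i}$ carries the data $(y_0,0)$ and $y^{\mathrm h}$ the data $(0,\phi)$. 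Both are governed by the Volterra operator
\[
    (\mathcal K u)(t):=a\int_\tau^t e^{(t-\sigma)\Delta_D}u(\sigma-\tau)\,d\sigma
    =a\int_0^{t-\tau}e^{(t-\tau-\sigma)\Delta_D}u(\sigma)\,d\sigma ,
\]
so that $y^{\mathrm i}=e^{\cdot\,\Delta_D}y_0+\mathcal K y^{\mathrm i}$ and $y^{\mathrm h}=w+\mathcal K y^{\mathrm h}$, with $w(t):=a\int_0^{\min\{t,\tau\}}e^{(t-\sigma)\Delta_D}\phi(\sigma-\tau)\,d\sigma$.

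The computational heart is the kernel-advancing identity $\mathcal K\varPhi_j=\varPhi_{j+1}$, where $\varPhi_j(t):=\frac{a^j}{j!}(t-j\tau)^j\chi_{[j\tau,+\infty)}(t)\,e^{(t-j\tau)^+\Delta_D}$ are the summands of $\varPhi$. I would check it by a direct calculation: on the relevant range the semigroup property gives $e^{(t-\tau-\sigma)\Delta_D}e^{(\sigma-j\tau)\Delta_D}=e^{(t-(j+1)\tau)\Delta_D}$, and the scalar integral $\int_0^{\,t-(j+1)\tau}u^j\,du=(t-(j+1)\tau)^{j+1}/(j+1)$ reproduces precisely the coefficient $\frac{a^{j+1}}{(j+1)!}(t-(j+1)\tau)^{j+1}$. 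Since $\varPhi_0=e^{\cdot\,\Delta_D}$, iteration gives $\mathcal K^n(e^{\cdot\,\Delta_D}y_0)=\varPhi_n y_0$ and hence $y^{\mathrm i}=\sum_{n\geq 0}\varPhi_n y_0=\varPhi y_0$. For the historical part, the substitution $\gamma=\sigma-\tau$ turns $w$ into $a\int_{-\tau}^{\min\{t-\tau,0\}}e^{(t-\tau-\gamma)\Delta_D}\phi(\gamma)\,d\gamma$, i.e. the $j=0$ term of \eqref{zhang-00-8-23-3}; a Fubini exchange combined with the same identity shows that applying $\mathcal K$ advances the kernel $\varPhi_j$ to $\varPhi_{j+1}$ inside the $\gamma$-integral, so $\sum_{n\geq 0}\mathcal K^n w=a\int_{-\tau}^{\min\{t-\tau,0\}}\varPhi(t-\tau-\gamma)\phi(\gamma)\,d\gamma$. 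Summing the two contributions yields \eqref{zhang-00-8-23-3}.

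A convenient feature is that $\varPhi_j(t)=0$ for $t<j\tau$, so on any compact interval $[0,T]$ only the finitely many indices $j\leq T/\tau$ contribute. The Neumann series is therefore \emph{locally finite}, which makes convergence, the invertibility of $I-\mathcal K$ (hence uniqueness of the mild solution and its identification with the series), and all term-by-term and Fubini manipulations immediate. The continuity $t\mapsto y^{\mathrm i}(t)$ into $\mathcal H^s$ then follows from $\{e^{t\Delta_D}\}$ being a $C_0$-semigroup on $\mathcal H^s$ and the continuity of the scalar factors; at the switching times $t=j\tau$ no jump occurs because $\varPhi_j$ vanishes to order $j\geq 1$ there.

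The step I expect to be the main obstacle is the regularity of the historical part into $C([0,+\infty);\mathcal H^s)$, since $\phi$ only lies in $\mathcal H^{s-1}$, one order below the target space. I would close this gap with the smoothing of the heat semigroup, namely $\|e^{r\Delta_D}\|_{\mathcal L(\mathcal H^{s-1},\mathcal H^s)}\leq C r^{-1/2}$ for $r>0$, so that the integrand satisfies $\|e^{(t-\tau-\gamma)\Delta_D}\phi(\gamma)\|_{\mathcal H^s}\leq C(t-\tau-\gamma)^{-1/2}\|\phi(\gamma)\|_{\mathcal H^{s-1}}$, exhibiting only an integrable singularity as $\gamma\uparrow t-\tau$. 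A Cauchy--Schwarz estimate against $\phi\in L^2((-\tau,0);\mathcal H^{s-1})$ bounds the $j=0$ term in $\mathcal H^s$, and a standard continuity-of-translation argument gives continuity in $t$; the higher-$j$ terms carry extra positive powers of $(t-\tau-\gamma)$ and are strictly better behaved. This is the only place where the precise function spaces in the statement and the gain of one half-derivative are genuinely used.
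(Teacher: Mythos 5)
Your overall route is the same as the paper's: both pass to the Volterra (variation-of-constants) form, solve by a Neumann series, and identify the $n$-th iterate with the summand $\varPhi_n$ of \eqref{zhang-00-8-23-2} via the semigroup property and the scalar integral $\int_0^u r^j\,dr = u^{j+1}/(j+1)$. The paper packages this as the convolution-algebra identity $\mathcal G_T^j F = \varPhi_j y_0 + \varPhi_j * (a\delta_\tau * \tilde\phi)$ and proves invertibility of $I-\mathcal G_T$ from the operator bound $\|\mathcal G_T^j\|\le (aT)^j/j!$, whereas you invoke the local finiteness of the series (only $j\le T/\tau$ contribute on $[0,T]$); both are fine and the combinatorial core is identical.

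There is, however, a concrete flaw in the step you yourself flag as the main obstacle. You propose to get the historical part into $C([0,+\infty);\mathcal H^s)$ from the operator-norm smoothing bound $\|e^{r\Delta_D}\|_{\mathcal L(\mathcal H^{s-1},\mathcal H^s)}\le Cr^{-1/2}$ followed by Cauchy--Schwarz in $\gamma$. That computation does not close: it produces the factor
\begin{equation*}
\Bigl(\int_{-\tau}^{\min\{t-\tau,0\}} (t-\tau-\gamma)^{-1}\,d\gamma\Bigr)^{1/2},
\end{equation*}
which diverges logarithmically at the upper endpoint whenever $0\le t<\tau$ (and similarly for each later window). Pulling the norm inside the integral and then applying Cauchy--Schwarz throws away exactly the cancellation you need. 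The correct argument applies Cauchy--Schwarz \emph{mode by mode} in the spectral decomposition: for $f\in L^2((0,T);\mathcal H^{s-1})$ one bounds
\begin{equation*}
\sum_i \lambda_i^{s}\Bigl|\int_0^t e^{-\lambda_i(t-\sigma)} f_i(\sigma)\,d\sigma\Bigr|^2
\le \sum_i \lambda_i^{s}\,\frac{1}{2\lambda_i}\int_0^t |f_i(\sigma)|^2\,d\sigma
\le \tfrac12\,\|f\|_{L^2((0,T);\mathcal H^{s-1})}^2,
\end{equation*}
so that the Duhamel term lands in $C([0,T];\mathcal H^{s})$. This is precisely what the paper's Lemma \ref{20241223-yb-lemma-SolutionsWithNonhomogeneousTermInFunctionalSetting} supplies (with $s$ there equal to $s-1$ here, giving even $L^2(\mathcal H^{s+1})\cap H^1(\mathcal H^{s-1})$ and hence $C(\mathcal H^{s})$). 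With that substitution your argument is complete; everything else in your proposal, including the kernel-advancing identity $\mathcal K\varPhi_j=\varPhi_{j+1}$ and the change of variables producing the $\gamma$-integral in \eqref{zhang-00-8-23-3}, matches the paper's proof.
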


\begin{remark}\label{20250118-yb-remark-ExpressionOfSolutions}
 $(i) $ Although it may be possible to derive the above expression indirectly from \cite[Theorem 4.2 and Example 1, pp. 365-366]{Nakagiri-1981},
we obtain it directly using a method developed in  \cite{Wang-Zhang-Zuazua-2022}.

$(ii)$ By \eqref{zhang-00-8-23-3}, we have that for each $y_0 \in L^2(\Omega)$,
$$
    y(t; y_0,0)  =  \varPhi(t) y_0,  ~\forall\, t\geq 0.
$$
Hence,  $\{ \varPhi(t) \}_{t\geq 0}$ is the flow generated by equation \eqref{20220510-yb-OriginalSystemWithTimeDelay} with $\phi=0$. Moreover,
 \eqref{zhang-00-8-23-2} provides a way to calculate this flow.

$(iii)$   The expression \eqref{zhang-00-8-23-2} gives the exact difference between the solution of the heat equations with and without time delay (i.e.,  $a=0$).  The sum on the right side of \eqref{zhang-00-8-23-2} is caused by the delay term $a y(t-\tau,x) $ in equation \eqref{20220510-yb-OriginalSystemWithTimeDelay}. This sum is independent of the historical value $\phi$.

$(iv)$ With $\tilde \phi$ the zero extension of $\phi$ over $\mathbb R$, \eqref{zhang-00-8-23-3} can be rewritten as follows:
\begin{align*}
    y(t; y_0,\phi) = \varPhi(t) y_0 +  a\int_{-\infty}^{t-\tau} \varPhi((t-\tau) - \gamma)  \tilde\phi(\gamma) d\gamma,
    ~ t \geq 0.
\end{align*}
This is quite similar to the classical variation-of-constants formula for ODEs.

$(v)$ Theorem \ref{20230510-yb-proposition-ExpressionForEvolutionWithTimeDelay} can be extended to be true for any $s \in \mathbb R$ by very small modifications of its proof.
\end{remark}

\begin{proof}[Proof of Theorem \ref{20230510-yb-proposition-ExpressionForEvolutionWithTimeDelay}]
We write
\begin{align*}
    (f*g)(t) :=  \int_0^t f(t-\eta) g(\eta) d\eta,~   t \geq 0\;\;\mbox{for any}\;\; f, g\in L^1_{loc}([0,+\infty); \mathcal{H}^{s-1}).
    \end{align*}
The proof is divided into the following steps.

\vskip 5pt
\noindent\textit{Step 1. To transform \eqref{20220510-yb-OriginalSystemWithTimeDelay} into an integral equation}

We start with writing a delay function in the convolution form. For this purpose, we introduce the following:
given $f\in L^1_{loc}([0,+\infty); \mathcal{H}^{s-1})$,
the function $\delta_{\tau} * f$ is also in  $L^1_{loc}([0,+\infty); \mathcal{H}^{s-1})$, and has the expression
    \begin{equation*}
        \delta_{\tau} * f (t)
        = \begin{cases}
            f(t-\tau), &t\geq \tau,\\
            0, &0\leq t < \tau.
        \end{cases}
    \end{equation*}
(The convolution here can also be understood in the distribution sense.)

Next, we write $\widetilde\phi$ for the zero extension of $\phi$ over $\mathbb{R}$. (It is treated as a function of the time variable.) It is clear that $\widetilde{\phi}\in L^2(\mathbb{R};\mathcal{H}^{s-1})$. Furthermore, we have
\begin{align*}
    y(t-\tau; y_0, \phi) = \delta_{\tau}  * ( y(\cdot; y_0, \phi)|_{[0,+\infty)} ) (t)  +  \delta_{\tau} * \widetilde\phi(t),
    ~ t \geq 0.
\end{align*}
Then, it follows from the variation of constants formula that  $y(\cdot ; y_0, \phi)|_{[0,+\infty)}$  satisfies the following integral equation :
\begin{equation}\label{20230519-yb-EquationFor-y}
        y_{+}(t)    - a ( e^{ \cdot \Delta_D} * \delta_{\tau} ) * y_+(t)
        = e^{t \Delta_D} y_0
        +  a ( e^{ \cdot \Delta_D} *    \delta_{\tau} * \widetilde\phi )(t),\;\;t\geq 0,
        \end{equation}
with the unknown $y_{+}$ in $C([0,+\infty); L^2(\Omega))$.

\vskip 5pt
\noindent\textit{Step 2. Analysis of the integral equation \eqref{20230519-yb-EquationFor-y}}

First, we aim to put equation \eqref{20230519-yb-EquationFor-y} in a functional analysis framework. To this end, we arbitrarily fix  $T>0$ and then define a linear operator
$\mathcal G_T$ on $C([0,T]; L^2(\Omega))$ in the following manner:
given $f\in C([0,T]; L^2(\Omega))$, we set
\begin{equation}\label{yu-8-18-2}
   (\mathcal{G}_T f)(t):=    a (e^{ \cdot \Delta_D} * \delta_{\tau}) *  f
   (t),\;\;t\in[0,T].
  \end{equation}
Then, equation \eqref{20230519-yb-EquationFor-y} can be rewritten as:
\begin{align}\label{20241224-yb-IntegralEquation}
    (y_+|_{[0,T]})  -  \mathcal{G}_T (y_+|_{[0,T]})    =   F|_{[0,T]}
    ~\text{in}~
    C([0,T]; L^2(\Omega)),
\end{align}
where the function $F \in C([0,+\infty); \mathcal{H}^s)$ is defined by
\begin{align}\label{20241224-yb-NonhomogeneousTermInIntegralEquation}
    F(t) := e^{t \Delta_D} y_0   +    e^{ \cdot \Delta_D} *     ( a \delta_{\tau} * \widetilde\phi )(t),
    ~t \geq 0.
\end{align}
Notice that by
Lemma \ref{20241223-yb-lemma-SolutionsWithNonhomogeneousTermInFunctionalSetting},  the assumptions that $y_0\in\mathcal{H}^s$ and $\phi\in L^2((-\tau,0);\mathcal{H}^{s-1})$,  and the properties of the heat semigroup $\{ e^{t \Delta_D} \}_{t\geq0} $, one can verify that $F \in C([0,+\infty); \mathcal{H}^s)$.


Next, we claim that for each $m\geq 0$,
\begin{equation}\label{zhang-8-22-2}
        (I_m - \mathcal G_T)^{-1}  =  I_m +  \sum_{j=1}^{+\infty} \mathcal G_T^j
        ~\text{in}~
        \mathcal L( C([0,T]; \mathcal H^m) ),
\end{equation}
where $I_m$ is the identity operator over $C([0,T]; \mathcal H^m)$.
To this end, we fix an $m \geq 0$ and let
\begin{align*}
    K(t,\tau) := a (e^{ \cdot \Delta_D} * \delta_{\tau}) (t-\tau),~ t\geq \tau \geq 0.
\end{align*}
Then, by the definition of $\mathcal G_T$ (see \eqref{yu-8-18-2}), we obtain that for each $j\in\mathbb{N}^+$ and $f\in C([0,T]; \mathcal H^m)$,
\begin{align*}
   \| \mathcal G_T^j f \|_{C([0,T]; \mathcal H^m)}
   \leq& \sup_{0\leq t_1 \leq T}  \int_0^{t_1} dt_2 \cdots \int_0^{t_{j}}
        \| K(t_1,t_2) K(t_2,t_3) \cdots K(t_j,t_{j+1}) f(t_{j+1}) \|_{\mathcal H^m} dt_{j+1}
    \nonumber\\
   \leq&  \Big(
             \sup_{0\leq \tau \leq t \leq T} \| K(t,\tau) \|_{\mathcal L(\mathcal H^m)}^j
          \Big)
          \Big(
             \sup_{0\leq t_1 \leq T}
             \int_0^{t_1} dt_2  \cdots \int_0^{t_j} dt_{j+1}
          \Big)
        \|f\|_{C([0,T]; \mathcal H^m)}
    \nonumber\\
   \leq& a^j (T^j / j!)  \|f\|_{C([0,T]; \mathcal H^m)},
\end{align*}
which implies
\begin{equation*}
    \|\mathcal{G}_T^j\|_{ \mathcal L( C([0,T];\mathcal H^m))}
    \leq (aT)^j / j!
    ~\text{for each}~
    j\in\mathbb{N}^+.
\end{equation*}
Therefore, the series
$
    \sum_{j=1}^{+\infty} \mathcal{G}_T^j
$
absolutely converges in $\mathcal{L}(C([0,T];\mathcal{H}^m))$. Furthermore, one can directly check that
$$
    (I_m - \mathcal{G}_T)     \bigg( I_m + \sum_{j=1}^{+\infty} \mathcal{G}_T^j  \bigg)
    =   I_m,
$$
which leads to \eqref{zhang-8-22-2}.

\vskip 5pt
\noindent\textit{Step 3. To solve the integral equation \eqref{20230519-yb-EquationFor-y} in $C([0,+\infty); \mathcal H^s)$}

Since $F \in C([0,+\infty); \mathcal{H}^s)$ (see \eqref{20241224-yb-NonhomogeneousTermInIntegralEquation}), we see from \eqref{zhang-8-22-2} that  equation \eqref{20241224-yb-IntegralEquation} (which is  a new form of equation \eqref{20230519-yb-EquationFor-y}) has a unique solution in $C([0,T]; L^2(\Omega))$, and is furthermore improved in  $C([0,T]; \mathcal{H}^s)$, i.e.,
\begin{align}\label{20241224-yb-FunctionalExpressionOfIntegralEquation}
    y_+|_{[0,T]} =  F|_{[0,T]} +  \sum_{j=1}^{+\infty} \mathcal{G}_T^j  (F|_{[0,T]})
    ~\text{in}~
    C([0,T]; \mathcal H^s).
\end{align}
At the same time, it follows from \eqref{yu-8-18-2} and \eqref{20241224-yb-NonhomogeneousTermInIntegralEquation} that for each $j\in\mathbb{N}^+$,
\begin{align*}
    \mathcal{G}_T^j  (F|_{[0,T]})    =&
    \underset{j}{ \underbrace{( a e^{ \cdot \Delta_D} * \delta_{\tau})
                *\cdots*
                ( a e^{\cdot \Delta_D} * \delta_{\tau} )} }
        * \left( e^{\cdot \Delta_D} y_0
        + a ( e^{ \cdot \Delta_D} *    \delta_{\tau} * \widetilde\phi )  \right)
        \nonumber\\
    =& a^j  \underset{j}{
                \underbrace{ \delta_{\tau}     *\cdots*    \delta_{\tau} }
            }
        * \underset{j+1}{
                \underbrace{ e^{\cdot \Delta_D}     *\cdots*    e^{\cdot \Delta_D} }
            } y_0
        +  a^{j}  \underset{j}{
                \underbrace{ \delta_{\tau}     *\cdots*    \delta_{\tau} }
            }
        * \underset{j+1}{
                \underbrace{ e^{\cdot \Delta_D}     *\cdots*    e^{\cdot \Delta_D} }
            } *  ( a \delta_{\tau} * \widetilde\phi )
    \nonumber\\
    =& a^j  \delta_{j\tau}
        * \underset{j+1}{
                \underbrace{ e^{\cdot \Delta_D}     *\cdots*    e^{\cdot \Delta_D} }
            } y_0
        +  a^{j}  \delta_{j\tau}
        * \underset{j+1}{
                \underbrace{ e^{\cdot \Delta_D}     *\cdots*    e^{\cdot \Delta_D} }
            } *  ( a \delta_{\tau} * \widetilde\phi )
    \nonumber\\
    =& a^j  \delta_{j\tau}     *  \Big( \frac{t^j}{j!} e^{t\Delta_D} \Big)  y_0
        +  a^{j}  \delta_{j\tau} *  \Big( \frac{t^j}{j!} e^{t\Delta_D} \Big)  *  ( a \delta_{\tau} * \widetilde\phi ),
\end{align*}
which implies that for each $j\in\mathbb{N}^+$,
\begin{align}\label{20241224-yb-ExpressionOfGeneralTermsInIntegralEquation}
    \mathcal{G}_T^j  (F|_{[0,T]})    =  \varPhi_j  y_0
        +  \varPhi_j  *  ( a \delta_{\tau} * \widetilde\phi )
        ~\text{over}~  [0,T],
\end{align}
where
\begin{equation}\label{yu-8-23-1}
    \varPhi_j(t)
    := \frac{1}{j!}(t-j\tau)^j  \chi_{[j\tau,+\infty)}(t)   e^{ (t-j\tau)^+ \Delta_D },
    ~t\geq 0.
\end{equation}

Now, by the definition of $\varPhi$ in \eqref{zhang-00-8-23-2}  and \eqref{yu-8-23-1}, we have
\begin{align*}
    \varPhi(t) = e^{t \Delta_D}  +  \sum_{j \geq 1}  \varPhi_j(t),~   t \geq 0.
\end{align*}
Since $T>0$ was arbitrarily taken, The above, together with  \eqref{20241224-yb-FunctionalExpressionOfIntegralEquation}, \eqref{20241224-yb-ExpressionOfGeneralTermsInIntegralEquation} and \eqref{20241224-yb-NonhomogeneousTermInIntegralEquation} , yields that the solution of equation \eqref{20230519-yb-EquationFor-y} satisfies
\begin{align}\label{20241224-yb-ExpressionOfIntegralEquationByFlowOperator}
    y_+ = \varPhi  y_0
        +  \varPhi  *  ( a \delta_{\tau} * \tilde\phi )
    ~\text{in}~
    C([0, +\infty); \mathcal H^s).
\end{align}

\vskip 5pt
\noindent\textit{Step 4. To verify  \eqref{zhang-00-8-23-3}}

Since  $y(\cdot, \cdot; y_0, \phi)|{[0,+\infty)}$ solves \eqref{20230519-yb-EquationFor-y} which has a unique solution,
we obtain  \eqref{zhang-00-8-23-3} from \eqref{20241224-yb-ExpressionOfIntegralEquationByFlowOperator}.
\vskip 5pt
In summary, the proof is completed.
\end{proof}

\section{Proofs of main results}\label{proof of main theorem}

\subsection{Proof of Theorem \ref{20230510-yb-theorem-PropagationOfSingularitiesOnlyWithInitialdata}}

This subsection is devoted to the proof of Theorem \ref{20230510-yb-theorem-PropagationOfSingularitiesOnlyWithInitialdata}. {\it We recall that
both $y(\cdot,\cdot; y_0,\phi)$ and $y(\cdot; y_0,\phi)$
are used to denote the solution of equation \eqref{20220510-yb-OriginalSystemWithTimeDelay}. They will be used in different places.}
We start with the following lemma.

\begin{lemma}\label{20240401-yubiao-SmoothSolutionWithSmoothHistory}
    Let $\phi \in C_0^{\infty}((-\tau,0) \times \Omega)$.      Then $y(\cdot,\cdot;0,\phi) \in  C^{\infty}( (-\tau,+\infty) \times \Omega)$.
\end{lemma}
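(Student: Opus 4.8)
The plan is to prove smoothness of $y(\cdot,\cdot;0,\phi)$ directly from the explicit expression furnished by Theorem~\ref{20230510-yb-proposition-ExpressionForEvolutionWithTimeDelay}. Since $y_0=0$, the formula \eqref{zhang-00-8-23-3} collapses to the purely historical term
\begin{align*}
    y(t;0,\phi) = a\int_{-\tau}^{\min\{t-\tau,0\}} \varPhi(t-\tau-\gamma)\,\phi(\gamma)\,d\gamma,\quad t\geq 0,
\end{align*}
while on $(-\tau,0)$ we simply have $y(t;0,\phi)=\phi(t)$, which is smooth by hypothesis. The first thing I would do is separate the two regions $(-\tau,0)$ and $[0,+\infty)$ and verify that no singularity is created at the junction $t=0$; because $\phi\in C_0^\infty((-\tau,0)\times\Omega)$ vanishes (together with all derivatives) near $t=0$, the historical piece and the formula above should glue smoothly there.

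Next I would analyze the right-hand integral by substituting the series expression \eqref{zhang-00-8-23-2} for $\varPhi$, writing $\varPhi(t)=e^{t\Delta_D}+\sum_{j\geq 1}\varPhi_j(t)$ with $\varPhi_j$ as in \eqref{yu-8-23-1}. The key observation is that each convolution against the smooth, compactly supported $\phi$ should smooth things out: since $\phi(\gamma)$ and all its $\gamma$-derivatives vanish at the endpoints $\gamma=-\tau$ and $\gamma=0$, integration by parts in $\gamma$ transfers derivatives from $\varPhi$ onto $\phi$ without boundary contributions. More concretely, each summand involves $e^{(t-j\tau-\gamma)^+\Delta_D}$ acting on the smooth spatial data $\phi(\gamma,\cdot)\in C_0^\infty(\Omega)$; the smoothing effect of the heat semigroup (which I would invoke exactly as in the proof of Corollary~\ref{20240401-yubiao-Corollay-ComparableAtInitialTimeForHeatSolutions}) makes each term smooth in $x$, and differentiability in $t$ follows because the only non-smooth factors, namely the cutoffs $\chi_{[j\tau,+\infty)}$ and the kinks of $(t-j\tau)^+$, are integrated against a $\phi$ that vanishes to infinite order at the relevant $\gamma$-endpoints. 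I would make this rigorous by differentiating under the integral sign and checking that the resulting expressions are continuous in $(t,x)$; the factor $(t-j\tau-\gamma)^j$ supplies enough vanishing to absorb the cutoff discontinuities.

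The main obstacle I anticipate is controlling the infinite sum uniformly: after differentiating the integral an arbitrary number of times in $t$ and $x$, one must show the resulting series of terms converges locally uniformly so that the sum is genuinely $C^\infty$. Fortunately the factorial decay $a^j/j!$ built into $\varPhi_j$ gives strong convergence, and on any bounded time interval only finitely many $j$ contribute nontrivially to the integral because of the cutoff $\chi_{[j\tau,+\infty)}$ combined with the support of $\phi$ in $\gamma$. So the sum is locally finite as a function of $t$, which sidesteps most convergence difficulties. I would therefore argue that on each compact time interval the expression reduces to a finite sum of smooth terms, and conclude smoothness on all of $(-\tau,+\infty)\times\Omega$.

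A cleaner alternative worth keeping in reserve is to avoid the explicit series entirely and instead use a bootstrap on the coupled system \eqref{20250203-HybridSystemOfHeatAndTransport}, or to argue inductively over the successive intervals $[(j-1)\tau, j\tau]$: on each such interval $y$ solves an inhomogeneous heat equation whose forcing term $a\,y(t-\tau,\cdot)$ is the solution on the previous interval; starting from the smooth $\phi$ on $(-\tau,0)$ and applying Lemma~\ref{20230524-yb-lemma-PropagationOfSmoothnessForHeatSolutions} (with the requisite compatibility at $t=j\tau$ guaranteed by the vanishing of $\phi$ near the endpoints) propagates smoothness forward. I expect the induction route to be the most transparent for presentation, with the explicit-formula route serving as a check.
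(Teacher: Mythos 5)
Your proposal is correct and follows essentially the same route as the paper: the paper likewise specializes \eqref{zhang-00-8-23-3} to $y_0=0$, uses the vanishing of $\phi$ near $t=0$ to handle the junction and extend the formula to $t>-\varepsilon$, and — in place of your integration by parts in $\gamma$ — performs the change of variables $\gamma\mapsto t-\tau-\gamma$ in the convolution so that every $t$-derivative lands on $\tilde\phi\in C_0^{\infty}(\mathbb R;\mathcal H^s)$, which is the same mechanism. One caution: your remark that differentiating under the integral sign works because ``the factor $(t-j\tau-\gamma)^j$ supplies enough vanishing'' absorbs only $j$ derivatives per term, so the integration-by-parts (equivalently, change-of-variables) step is the one you must actually rely on for $C^{\infty}$; similarly, the spatial regularity comes from $\phi(\gamma,\cdot)\in C_0^{\infty}(\Omega)\subset\bigcap_{s}\mathcal H^s$ together with boundedness of $e^{t\Delta_D}$ on each $\mathcal H^s$, rather than from the smoothing effect, which is unavailable when $(t-j\tau-\gamma)^+=0$.
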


\begin{proof}
We arbitrarily fix $x_0 \in \Omega$ and $t_0 > -\tau$. It suffices to show that the solution $y(\cdot,\cdot;0,\phi)$ is smooth at $(t_0, x_0)$.
There are only two cases for $t_0$: either $t_0\in (-\tau,0)$ or $t_0 \geq 0$. In the first case that $t_0\in (-\tau,0)$, we have
\begin{align*}
    y(t, x; 0,\phi) = \phi(t,x), ~t\in (-\tau, 0), ~x\in\Omega.
\end{align*}
Since $\phi \in C_0^{\infty}((-\tau,0) \times \Omega)$ and $t_0\in (-\tau,0)$, the above shows that the solution $y(\cdot,\cdot;0,\phi)$ is smooth at $(t_0, x_0)$.

In the second case $t_0 \geq 0$, we write $\tilde\phi$ and $\widetilde \varPhi$ for the zero extension of $\phi$ and $\varPhi$ in $\mathbb R \times \Omega$ and $\mathbb R$, respectively. (Here, $\varPhi$ is given by \eqref{zhang-00-8-23-2}. )
Then it follows from  \eqref{zhang-00-8-23-3} that
\begin{align*}
    y(t; 0,\phi) =&  a  \int_{-\tau}^{ \min\{t-\tau,0\}  }
        \varPhi(t-\tau-\gamma)   \phi(\gamma)  d\gamma
        \nonumber\\
=& a  \int_{\mathbb R} \widetilde\varPhi(t-\tau-\gamma)  \tilde\phi(\gamma) d\gamma
= a  \int_{\mathbb R} \widetilde\varPhi(\gamma)
\tilde\phi(t-\tau-\gamma) d\gamma,\;\;t\geq 0.
\end{align*}
However,  since $\phi \in C_0^{\infty}((-\tau,0) \times \Omega)$, we can find  $\varepsilon \in (0,\tau)$ such that
\begin{align*}
    y(t;0,\phi) = \phi(t)=0,\;\; ~t\in (-\varepsilon, 0),
\end{align*}
The above two equalities give
\begin{align}\label{4.1-1-1-w}
    y(t; 0,\phi) = a  \int_{\mathbb R} \widetilde\varPhi(\gamma)
\tilde\phi(t-\tau-\gamma) d\gamma,
~t > -\varepsilon.
\end{align}
At the same time, since $\phi \in C_0^{\infty}((-\tau,0) \times \Omega)$, we have
$\tilde \phi  \in C_0^{\infty} (\mathbb R; \mathcal H^s )$
    for each  $s \in \mathbb R$.
This, together with \eqref{4.1-1-1-w} and  \eqref{zhang-00-8-23-2}, yields
\begin{align*}
    y(\cdot; 0,\phi)  \in C^{\infty}( (-\varepsilon,+\infty); \mathcal H^s)
    ~\text{for each}~  s \in \mathbb R.
\end{align*}
Thus, the solution $y(\cdot, \cdot;0,\phi)$
is smooth in $(t_0, x_0)$ in the second case. This
completes the proof.
\end{proof}

We are now in a position to prove Theorem \ref{20230510-yb-theorem-PropagationOfSingularitiesOnlyWithInitialdata}.
\begin{proof}[Proof of Theorem \ref{20230510-yb-theorem-PropagationOfSingularitiesOnlyWithInitialdata}]

We arbitrarily fix    $x_0\in\Omega$ and  $j\in \mathbb N$,
and then we  arbitrarily fix $s,\alpha\in \mathbb N$ with $\alpha \leq j$.  First of all, we obtain from
 Lemma \ref{20240401-yubiao-SmoothSolutionWithSmoothHistory} that
\begin{align}\label{20240402-yubiao-SmoothSolutionWithOnlyHistory}
    y(\cdot;\cdot;0,\phi) \in  C^{\infty}( (-\tau,+\infty) \times \Omega).
\end{align}
The rest proof is organized into the following four steps.

\vskip 5pt
\noindent\textit{Step 1. To prove that $(i)\Rightarrow(ii)$.}

Suppose that the statement $(i)$ is true. To prove the statement $(ii)$, by contradiction, we suppose that it was not true, i.e.,
\begin{align*}
   y(\cdot,\cdot;y_0,\phi)  \in H^{\alpha, s+1+2(j-\alpha)}_{loc}(j\tau, x_0).
\end{align*}
Then by \eqref{yu-10-15-2}, there is  $\varepsilon \in (0,\tau)$ and  open neighborhood $\hat U \subset \Omega$  of $x_0$ such that
\begin{align*}
    y(\cdot,\cdot;y_0,\phi) \in H^{\alpha}
\big(
        (j\tau - \varepsilon, j\tau+\varepsilon); H^{s+1+2(j-\alpha)}_{loc}(\hat U) \big).
\end{align*}
From this and \eqref{20240402-yubiao-SmoothSolutionWithOnlyHistory}, we determine that
\begin{align}\label{20240402-yubiao-RegularityWithOnlyInitialData}
    y(\cdot,\cdot;y_0,0) = y(\cdot,\cdot;y_0,\phi)
    - y(\cdot,\cdot;0,\phi)
    \in H^{\alpha}
\big(
        (j\tau - \varepsilon, j\tau + \varepsilon); H^{s+1+2(j-\alpha)}_{loc}(\hat U) \big).
\end{align}

Next, we apply  Theorem \ref{20230510-yb-proposition-ExpressionForEvolutionWithTimeDelay}  (with $\phi=0$)  to obtain
\begin{align*}
    y(t, x; y_0, 0) 
    =&  \frac{a^j}{j!}  (t-j\tau)^j
        \chi_{[j\tau,+\infty)}(t)   [e^{(t-j\tau)^+ \Delta_D} y_0 ](x)
        \nonumber\\
     & +  \sum_{0 \leq k <j}
        \frac{a^k}{k!}  (t-k\tau)^k
         [e^{(t-k\tau) \Delta_D} y_0](x),\;\; \;t \in \big(
            (j-1) \tau, (j+1) \tau
        \big), ~x\in \Omega,
\end{align*}
with convention that  the sum is not there if its index set is empty.
Recalling \eqref{20240401-yb-WeightedSolutionsOfHeatEquation} for the definition of the function $f_k$, we see from the above that
\begin{align}\label{20240402-yubiao-TimeDelaySolutionAndHeatSolutions}
    y(t,x; y_0, 0) 
    =  \frac{a^j}{j!} f_j (t-j\tau,x)  +  \sum_{0 \leq k <j} \frac{a^k}{k!}
     f_k ( t - k\tau,x),\;\;t \in \big(
            (j-1) \tau, (j+1) \tau
        \big),
        ~x\in\Omega.
\end{align}

Now, according to  \eqref{20240402-yubiao-ContinuityAtInitialTimeForHeatSolution} (where $(\alpha,\beta) = (\hat \alpha,k)$), each $f_k(\cdot-k\tau,\cdot)$ in the sum of  \eqref{20240402-yubiao-TimeDelaySolutionAndHeatSolutions} is smooth over $\big(
            (j-1) \tau, (j+1) \tau
        \big)\times\Omega$.
This, together with \eqref{20240402-yubiao-TimeDelaySolutionAndHeatSolutions}  and \eqref{20240402-yubiao-RegularityWithOnlyInitialData}, implies
\begin{align*}
    f_j (\cdot-j\tau,\cdot)  \in H^{\alpha}
\big(
        (j\tau,j\tau+\varepsilon); H^{s+1+2(j-\alpha)}_{loc}(\hat U) \big).
\end{align*}
Then, by the definition of $f_j$ (see \eqref{20240401-yb-WeightedSolutionsOfHeatEquation}), we find that
\begin{align*}
  \mbox{the function} t\rightarrow   \partial_t^{\alpha} (t^j e^{t\Delta_D} y_0) \;(t\in(0,  \varepsilon))\;\;\mbox{belongs to}\;\; L^2 \big(
                (0, \varepsilon); H^{s+1+2(j-\alpha)}_{loc}(\hat U)
            \big).
\end{align*}
Then, we apply Proposition \ref{20231210-yb-proposition-ReverseRegularityForSolutionsOFHeatEquation}, where $\beta$, $T$, $U$ and $y$ are replaced by $j$, $\varepsilon$, $\hat U$ and $e^{t\Delta_D} y_0$,
respectively,
to obtain
$y_0  \in H^s_{loc}(\hat U)$.
Now, by the definition of $H^s_{loc}(x_0)$ (see \eqref{yu-10-15-1}), we are led to a contradiction with the statement $(i)$. Therefore, the statement $(ii)$ is true.

\vskip 5pt
\noindent\textit{Step 2. To prove $(ii)\Rightarrow(iii)$.}

The statement $(iii)$ directly follows from the statement $(ii)$.

\vskip 5pt
\noindent\textit{Step 3. To prove $(iii)\Rightarrow(i)$.}

Assume that the statement $(iii)$ is true. To prove the statement $(i)$, by contradiction, we suppose that it was not true, i.e.,
\begin{align}\label{20240401-yubiao-ProofOfMainTheorem-StatementOne}
    y_0 \in H^s_{loc} (x_0).
\end{align}
In contradiction with the statement $(iii)$,
it suffices to show that for an arbitrarily fixed  $\hat \alpha \in \mathbb N \cap [0,j]$,
\begin{align}\label{20240402-yubiao-PrepareForStatementTwo}
    y(\cdot,\cdot; y_0,\phi) \in H^{\hat\alpha, s+1+2(j-\hat\alpha)}_{loc}(j\tau,x_0).
\end{align}
  For this purpose, it follows from \eqref{20240401-yubiao-ProofOfMainTheorem-StatementOne} that $y_0 \in H^{s}_{loc}(U)$ for some open subset $U$
  with  $x_0\in U\subset \Omega$ ,
Then, for each natural number $k \leq j$, we apply \eqref{20240402-yubiao-ContinuityAtInitialTimeForHeatSolution} (where $(\alpha,\beta) = (\hat \alpha,k)$) to each function $f_k$ in \eqref{20240402-yubiao-TimeDelaySolutionAndHeatSolutions} to get that
\begin{align*}
    f_k  \in
    \begin{cases}
        C^{\infty}   ( ( 0, +\infty) \times \Omega )   ~\text{when}~   k < j ,
     \\     \\
        H^{\hat\alpha}(\mathbb R; H^{s+1+2(j-\hat\alpha)}_{loc} (U) )    ~\text{when}~   k = j .
    \end{cases}
\end{align*}
With \eqref{20240402-yubiao-TimeDelaySolutionAndHeatSolutions},
the above leads to
\begin{align*}
    y(\cdot,\cdot; y_0, 0) \in H^{\hat\alpha}_{loc}
    \big(
        ((j-1)\tau, (j+1) \tau); H^{s+1+2(j-\hat\alpha)}_{loc} (U)
    \big).
\end{align*}
This, along with \eqref{20240402-yubiao-SmoothSolutionWithOnlyHistory},
yields
\begin{align*}
    y(\cdot,\cdot;y_0,\phi) =& y(\cdot,\cdot;y_0,0)
        + y(\cdot,\cdot;0,\phi)
        \nonumber\\
        \in &  H^{\hat\alpha}_{loc}
    \big(
        ((j-1)\tau, (j+1) \tau); H^{s+1+2(j-\hat\alpha)}_{loc} (U)
    \big).
\end{align*}
With \eqref{yu-10-15-2}, the above gives \eqref{20240402-yubiao-PrepareForStatementTwo}, which contradicts the statement $(iii)$. Thus, the statement $(i)$ is true.

\vskip 5pt
\noindent\textit{Step 4. To show that $y(\cdot,\cdot;y_0,\phi)$ is smooth outside the set $\{j\tau\}_{j=0}^{+\infty} \times \Omega$.}

We arbitrarily fix $t_0 \in (-\tau,+\infty) \setminus \{j\tau\}_{j=0}^{+\infty}$ and $x_0 \in \Omega$.  We now show
that the solution $y(\cdot,\cdot;y_0,\phi)$ is smooth at the point $(t_0,x_0)$.
There are two cases for $t_0$: $t_0 \in (-\tau, 0)$ or $t_0 > 0$.
In the first case when $t_0 \in (-\tau, 0)$, we have
\begin{align*}
    y(t, x;y_0,\phi) = \phi(t,x), ~t\in (-\tau,0), x\in\Omega.
\end{align*}
Since $\phi \in C_0^{\infty}( (-\tau,0) \times \Omega)$ was assumed, it is clear that the solution $y(\cdot,\cdot;y_0,\phi)$ is smooth at the point $(t_0,x_0)$.

In the second case where $t_0 > 0$, we note that $t_0 \notin \{j\tau\}_{j=0}^{+\infty}$. Thus, we can let $j_0$ be the integer such that
$j_0 \tau  <  t_0  <  (j_0 + 1) \tau$.
By \eqref{zhang-00-8-23-3} (with $\phi=0$), we see  to obtain when
$t \in \big( j_0 \tau, (j_0+1) \tau \big)$,
\begin{align*}
    y(t; y_0, 0) 
    =  \frac{ a^{j_0} }{j_0!}  (t - j_0\tau)^{j_0}
         e^{(t-j_0\tau)^+ \Delta_D} y_0
 +  \sum_{0 \leq k < j_0}
        \frac{a^k}{k!}  (t-k\tau)^k
          e^{(t-k\tau) \Delta_D} y_0,\;\;t \in \big(
            j_0 \tau, (j_0+1) \tau
        \big).
          \end{align*}
With  the smoothing effect of the heat semigroup, the above result shows that the solution $y(\cdot,\cdot;y_0,0)$ is smooth at $(t_0,x_0)$. This, together with \eqref{20240402-yubiao-SmoothSolutionWithOnlyHistory}, implies that the function
\begin{align*}
    y(t,x;y_0,\phi)  = y(t, x;y_0,0)
    +  y(t, x;0,\phi),
    ~t \geq 0, x\in\Omega
\end{align*}
is smooth at $(t_0,x_0)$. This leads to the conclusion in Step 4 and completes the proof of Theorem \ref{20230510-yb-theorem-PropagationOfSingularitiesOnlyWithInitialdata}.
\end{proof}

\subsection{Proof of Theorem \ref{20230510-yb-theorem-PropagationOfSingularitiesForGeneralSolutions}}

This subsection is devoted to the proof of Theorem \ref{20230510-yb-theorem-PropagationOfSingularitiesForGeneralSolutions}, which requires the following result.

\begin{lemma}\label{20241219-yb-proposition-EquivalenceBetweenTwoInstants}
    Suppose that $y_0 \in L^2(\Omega)$ and $\phi \in L^2( (-\tau, 0) \times \Omega )$. Let $t_0\in[-\tau, 0)$, $x_0 \in \Omega$ and $j\in \mathbb N^+$. Then, given $r,s\in \mathbb N$, the following two statements are equivalent:
    \begin{itemize}
        \item[$(i)$] $y(\cdot,\cdot; y_0, \phi)  \in  \widehat{H}^{(r,s),2j}_{loc}(t_0 + j \tau,x_0)$;
        %
        \item[$(ii)$] $y(\cdot,\cdot; y_0, \phi)  \in  \widehat{H}^{(r,s),2}_{loc}(t_0 + \tau,x_0)$.
    \end{itemize}
    Here,  $\widehat{H}^{(r,s),p}_{loc}(t,x)$ is given by \eqref{20241218-yb-JointLocalSobolevSpace}.
\end{lemma}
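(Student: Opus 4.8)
The plan is to prove the equivalence by an induction that peels off one delay period $\tau$ at a time, each step trading two orders of smoothness through the heat operator $\partial_t-\Delta$. The engine of the whole argument will be the following local reformulation of Proposition \ref{2021218-yb-lemma-NonhomogenuousHeat}: \emph{if $\partial_t y-\Delta y=f$ holds in a neighborhood of a point $(t,x)$, then for every $k\in\mathbb N^+$,
\begin{align*}
    y\in\widehat H^{(r,s),2k}_{loc}(t,x)
    \iff
    f\in\widehat H^{(r,s),2(k-1)}_{loc}(t,x).
\end{align*}}
I would derive this by unfolding the definition \eqref{20241218-yb-JointLocalSobolevSpace}. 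Writing $\widehat H^{(r,s),2(k-1)}_{loc}(t,x)=\bigcap_{\alpha=0}^{k-1}H^{r+\alpha,\,s+2(k-1-\alpha)}_{loc}(t,x)$, I apply Proposition \ref{2021218-yb-lemma-NonhomogenuousHeat} (with $(r,s)$ there replaced by $(r+\alpha,s+2(k-1-\alpha))$) to each factor. Membership of $f$ in the $\alpha$-th factor is equivalent to $y\in H^{r+\alpha+1,\,s+2(k-1-\alpha)}_{loc}\cap H^{r+\alpha,\,s+2(k-\alpha)}_{loc}$; both of these have joint order $2k$. Letting $\alpha$ run over $0,\dots,k-1$, the first components contribute the indices $\alpha'=1,\dots,k$ and the second components contribute $\alpha'=0,\dots,k-1$, so collecting them telescopes exactly to $\bigcap_{\alpha'=0}^{k}H^{r+\alpha',\,s+2(k-\alpha')}_{loc}(t,x)=\widehat H^{(r,s),2k}_{loc}(t,x)$. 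This accounting, run in both directions, gives the displayed equivalence.

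With this in hand, I would run the induction over the periods. Near the point $(t_0+k\tau,x_0)$ the solution satisfies equation \eqref{20220510-yb-OriginalSystemWithTimeDelay}, i.e. $\partial_t y-\Delta y=f$ with $f=a\,y(\cdot-\tau,\cdot)$. For $k\ge 2$ and $t_0\in[-\tau,0)$ one has $t_0+k\tau\ge\tau>0$, so there is a genuine open neighborhood of $(t_0+k\tau,x_0)$ inside $\mathbb R^+\times\Omega$ on which the equation holds, and the local reformulation applies. It yields $y\in\widehat H^{(r,s),2k}_{loc}(t_0+k\tau,x_0)\iff a\,y(\cdot-\tau,\cdot)\in\widehat H^{(r,s),2(k-1)}_{loc}(t_0+k\tau,x_0)$. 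Since $a\neq0$ and translation in time by $\tau$ is an isometry between the relevant time-interval Sobolev spaces (the spatial factor and its neighborhood being untouched), the right-hand condition is equivalent to $y\in\widehat H^{(r,s),2(k-1)}_{loc}(t_0+(k-1)\tau,x_0)$. Thus I obtain the one-step recursion
\begin{align*}
    y\in\widehat H^{(r,s),2k}_{loc}(t_0+k\tau,x_0)
    \iff
    y\in\widehat H^{(r,s),2(k-1)}_{loc}(t_0+(k-1)\tau,x_0),
    \qquad k\ge 2,
\end{align*}
and iterating it from $k=j$ down to $k=2$ chains $(i)$ to $(ii)$. (For $j=1$ the two statements coincide, so the claim is trivial.) The delayed points $t_0+(k-1)\tau\ge0>-\tau$ stay in the domain $(-\tau,\infty)\times\Omega$ of $y$, so every step is legitimate.

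The main obstacle I expect is the bookkeeping in the local reformulation: verifying that applying Proposition \ref{2021218-yb-lemma-NonhomogenuousHeat} factor-by-factor really reconstitutes the \emph{full} intersection defining $\widehat H^{(r,s),2k}_{loc}$ with no index missing or double-counted, and confirming that the endpoint index $\alpha'=0$ and $\alpha'=k$ are indeed produced. A secondary, but easily handled, point is the restriction $k\ge 2$ needed to guarantee that the governing equation holds on a two-sided neighborhood in time (this is exactly why the boundary cases $t_0=0$ and $t_0=-\tau$ are excluded from, or treated separately in, the periodic sequence); here the hypothesis $t_0\in[-\tau,0)$ together with $k\ge2$ keeps us strictly inside $\mathbb R^+\times\Omega$, so no boundary or compatibility issue at $t=0$ enters. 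Once these two points are dispatched, the recursion and its iteration are routine, and the lemma follows, furnishing the reduction that makes the $\tau$-periodicity of Theorem \ref{20230510-yb-theorem-PropagationOfSingularitiesForGeneralSolutions} transparent.
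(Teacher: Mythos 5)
Your proposal is correct and follows essentially the same route as the paper: both apply Proposition \ref{2021218-yb-lemma-NonhomogenuousHeat} factor-by-factor across the intersection defining $\widehat H^{(r,s),2k}_{loc}$ (with $f=a\,y(\cdot-\tau,\cdot)$), check that the resulting pairs of indices exactly reconstitute the order-$2k$ intersection, and then iterate the one-period recursion together with the time-translation identification. The only cosmetic difference is that you package the index bookkeeping as a standalone equivalence $y\in\widehat H^{(r,s),2k}_{loc}\iff f\in\widehat H^{(r,s),2(k-1)}_{loc}$ before specializing to the delay equation, whereas the paper does it inline.
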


\begin{proof}
We arbitrarily fix $r,s\in \mathbb N$. We simply write
$y(\cdot,\cdot)$ for the solution $y(\cdot,\cdot; y_0, \phi)$.
It suffices to prove that for each $\hat j\in \mathbb N^+$,
\begin{align}\label{20241218-yb-EquivalenceBetweenTwoInstants}
    y   \in  \widehat{H}^{(r,s),2(\hat j+1)}_{loc}(t_0 + (\hat j+1) \tau,x_0)
    ~\Longleftrightarrow~
    y   \in  \widehat{H}^{(r,s),2\hat j}_{loc}(t_0 + \hat j \tau,x_0).
\end{align}
For this purpose, we arbitrarily fix  $j_0 \in \mathbb N^+$, then take  $\alpha,\beta \in \mathbb N$ such that
\begin{align}\label{20241218-yb-AlphaBeta}
    2\alpha + \beta = 2j_0.
\end{align}
Since $y(\cdot,\cdot)$ solves equation \eqref{20220510-yb-OriginalSystemWithTimeDelay}, we have
\begin{align*}
    \partial_t y(t,x)  - \Delta y(t,x) =  a y(t-\tau,x),
    ~t\in   \big((j_0-1)\tau, (j_0+1)\tau \big),
    ~x\in \Omega.
\end{align*}
Thus, we can apply Proposition \ref{2021218-yb-lemma-NonhomogenuousHeat}, where $t_0$, $(r,s)$ and $f$ are replaced by
\begin{align*}
    t_0 + (j_0+1) \tau,
    ~(r + \alpha, s + \beta)
    ~\text{and}~
     a y(\cdot - \tau),
\end{align*}
respectively, to obtain
\begin{align*}
  &  y(\cdot,\cdot)   \in  H^{(r+\alpha)+1, s+\beta}_{loc}  \big( t_0 + (j_0+1) \tau,x_0  \big)
    \cap
    H^{r+\alpha, (s+\beta)+2}_{loc} \big( t_0 + (j_0+1) \tau,x_0  \big)
    \nonumber\\
   & \quad\quad\quad\quad\quad
    \Longleftrightarrow  ~
    y(\cdot -\tau,\cdot)   \in  H^{r+\alpha, s+\beta}_{loc} \big( t_0 + (j_0+1) \tau,x_0  \big).
\end{align*}
Because $\alpha$ and $\beta$ can be any non negative numbers  with \eqref{20241218-yb-AlphaBeta}, the above, together with \eqref{20241218-yb-JointLocalSobolevSpace}, implies
\begin{align*}
    y(\cdot,\cdot)   \in  \widehat{H}^{(r,s),2(j_0+1)}_{loc} \big( t_0 + (j_0+1) \tau,x_0  \big)
    ~\Longleftrightarrow~
    y(\cdot -\tau,\cdot)   \in  \widehat{H}^{(r,s),2j_0}_{loc} \big( t_0 + (j_0+1) \tau,x_0  \big),
\end{align*}
which gives \eqref{20241218-yb-EquivalenceBetweenTwoInstants} with $\hat j=j_0$. This completes the proof.
\end{proof}

We are now in a position to prove Theorem \ref{20230510-yb-theorem-PropagationOfSingularitiesForGeneralSolutions}.

\begin{proof}[Proof of Theorems \ref{20230510-yb-theorem-PropagationOfSingularitiesForGeneralSolutions}]
We arbitrarily fix $x_0\in \Omega$, $j \in \mathbb N^+$, and $r,s \in \mathbb N$.
First, we claim
\begin{align}\label{20241219-yb-EquivalenceBetweenFirstTwoInstants}
    \phi \in H^{r,s}_{loc}(t_0, x_0)
    \Longleftrightarrow
    y(\cdot,\cdot; y_0, \phi) \in
    \widehat{H}^{(r,s),2}_{loc}(t_0 + \tau, x_0).
\end{align}
Indeed, by \eqref{20220510-yb-OriginalSystemWithTimeDelay}, we see that $y(\cdot,\cdot; y_0, \phi)$ satisfies
\begin{align*}
    \partial_t y(t,x)  - \Delta y(t,x) = a \phi(t-\tau,x),\;\;\;
    t\in (0, \tau), x\in \Omega.
\end{align*}
Then, we can apply Proposition \ref{2021218-yb-lemma-NonhomogenuousHeat},  where $t_0$ and $f$ are replaced by $t_0 + \tau$ and $a\phi(\cdot-\tau,\cdot)$,
respectively, to obtain
\begin{align*}
    y(\cdot,\cdot; y_0, \phi)   \in  H^{r+1, s}  ( t_0 + \tau, x_0)
    \cap
    H^{r, s+2}_{loc}  (t_0 + \tau, x_0)
    \Longleftrightarrow  ~
    \phi  \in  H^{r, s}_{loc} (t_0, x_0).
\end{align*}
With the definition of $\widehat{H}^{(r,s),2}_{loc}(t_0+\tau, x_0)$ in \eqref{20241218-yb-JointLocalSobolevSpace}, the above leads to \eqref{20241219-yb-EquivalenceBetweenFirstTwoInstants}.

Next, by Lemma \ref{20241219-yb-proposition-EquivalenceBetweenTwoInstants}, we find
\begin{align*}
    y(\cdot,\cdot; y_0, \phi) \in
    \widehat{H}^{(r,s),2}_{loc}(t_0 + \tau, x_0)
    \Longleftrightarrow
    y(\cdot,\cdot; y_0, \phi) \in
    \widehat{H}^{(r,s),2j}_{loc}(t_0 + j\tau, x_0).
\end{align*}
This, together with \eqref{20241219-yb-EquivalenceBetweenFirstTwoInstants}, gives the equivalence between the statements $(i)$ and $(ii)$ in this theorem. Thus, we finish the proof of Theorems \ref{20230510-yb-theorem-PropagationOfSingularitiesForGeneralSolutions}.
 \end{proof}

\subsection{
Proof of Theorem \ref{20240406-yubiao-Theorem-SingularitiesOfHistoricalValueAtEndpoints}
}

This subsection presents the proof of Theorem \ref{20240406-yubiao-Theorem-SingularitiesOfHistoricalValueAtEndpoints}.
We first give the following lemma,
which is an analogue of Proposition \ref{2021218-yb-lemma-NonhomogenuousHeat} with an emphasis on regularities up to the boundaries of time intervals.

\begin{lemma}\label{2021220-yb-lemma-IntialTimeCaseForNonhomogenuousHeat}
    Let $T>0$ and let $U$ be an open non-empty subset of $\mathbb R^n$. Suppose that  two distributions
    $y \in C( [0, T) ;   \mathcal{D}'(U) )$
    and
    $f\in \mathcal{D}'((0,T) \times U)$ satisfy
\begin{align}\label{20241218-yubiao-IntialTimeCaseNonhomogenuousHeatEquation}
\partial_t y  - \Delta y = f ~\text{in}~ (0, T) \times U.
\end{align}
Then, given $r,s\in \mathbb N$ and $x_0 \in U$,
the following statements are equivalent:
\begin{itemize}[leftmargin=4em]
    \item[$(i)$]  $y \in H^{r+1,s}_{loc,+}(0,x_0) \cap H^{r,s+2}_{loc,+}(0,x_0)$.
    \item[$(ii)$] It holds that
\begin{align}\label{20241220-yb-CompatibilityOfRegularitiesAtInitialTime}
   f\in H^{r,s}_{loc,+}(0,x_0)
   ~\text{and}~
   \mathcal{G}_k   \in  H^{s+1}_{loc} (x_0)
   ~\text{for each integer}~   k \in [0,r],
\end{align}
where   $\{ \mathcal{G}_k \}_{k=0}^r$ is  a sequence of distributions, given recursively  in the following manner:
\begin{align}\label{20241220-yb-DefinitionsOfTermsInCompatibilityOfRegularitiesAtInitialTime}
    \mathcal{G}_k  :=
    \begin{cases}
        y|_{t=0}   ~~\text{when}~ k=0,  \\
        \partial_t^{k-1} f(0, \cdot)  + \Delta  \mathcal{G}_{k-1}
        ~~\text{when}~   k \in (0,r],
    \end{cases}
   ~\text{defined locally around}~  x_0.
\end{align}
\end{itemize}
\end{lemma}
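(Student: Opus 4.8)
The plan is to exploit the equation $\partial_t y - \Delta y = f$ to trade one time derivative for two spatial derivatives, exactly as in Proposition \ref{2021218-yb-lemma-NonhomogenuousHeat}, while tracking the behaviour at the initial boundary $t=0$ through the traces $\partial_t^k y|_{t=0}$. The guiding observation is that the recursively defined distributions $\mathcal{G}_k$ in \eqref{20241220-yb-DefinitionsOfTermsInCompatibilityOfRegularitiesAtInitialTime} are precisely the formal Taylor coefficients in time of $y$ at $t=0$: differentiating the equation gives $\partial_t^k y = \partial_t^{k-1} f + \Delta \partial_t^{k-1} y$, so that $\partial_t^k y|_{t=0} = \mathcal{G}_k$ whenever these traces exist. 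As a preliminary reduction I would assume $U\subset\Omega$ (shrinking and translating $U$ as in the proof of Proposition \ref{20231210-yb-proposition-ReverseRegularityForSolutionsOFHeatEquation}), so that the global tools attached to $\Delta_D$ and $\mathcal{H}^s$ become available after multiplying by a spatial cut-off.

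For $(i)\Rightarrow(ii)$ I would argue directly. Writing $f=\partial_t y-\Delta y$ and using $y\in H^{r+1,s}_{loc,+}(0,x_0)\cap H^{r,s+2}_{loc,+}(0,x_0)$, one sees $\partial_t y,\Delta y\in H^{r,s}_{loc,+}(0,x_0)$, hence $f\in H^{r,s}_{loc,+}(0,x_0)$ at once. For the compatibility part, fix $k\in[0,r]$ and note that $u_k:=\partial_t^k y$ lies in $H^{1,s}_{loc,+}(0,x_0)\cap H^{0,s+2}_{loc,+}(0,x_0)$ because $r+1-k\ge1$ and $r-k\ge0$. The parabolic trace theorem, i.e. the embedding of $H^1((0,\delta);H^s)\cap L^2((0,\delta);H^{s+2})$ into $C([0,\delta];[H^s,H^{s+2}]_{1/2})$ together with $[H^s,H^{s+2}]_{1/2}=H^{s+1}$, then yields $u_k|_{t=0}\in H^{s+1}_{loc}(x_0)$; the recursion above identifies this trace with $\mathcal{G}_k$, which gives $(ii)$.

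The implication $(ii)\Rightarrow(i)$ is the substantial one, and I would treat it by induction on $r$, reducing everything to the base case $r=0$. For the inductive step, observe that $Y:=\partial_t y$ solves $\partial_t Y-\Delta Y=\partial_t f$ with $\partial_t f\in H^{r-1,s}_{loc,+}(0,x_0)$, and that its associated compatibility sequence is exactly the shifted one, namely $Y|_{t=0}=\mathcal{G}_1$ and $\partial_t^{k-1}(\partial_t f)(0)+\Delta\mathcal{G}_{k}=\mathcal{G}_{k+1}$. Hence the induction hypothesis at level $(r-1,s)$ gives $\partial_t y\in H^{r,s}_{loc,+}(0,x_0)\cap H^{r-1,s+2}_{loc,+}(0,x_0)$, which supplies all time derivatives of $y$ of order $\ge 1$ in the required spaces; the remaining bottom pieces $y\in L^2((0,\delta);H^s_{loc})\cap L^2((0,\delta);H^{s+2}_{loc})$ come from the base case applied to $y$ itself. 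In the base case $r=0$ I would localize by a cut-off $\rho\equiv1$ near $x_0$ with $\rho\in C_0^\infty(U)$, so that $z:=\rho y$ solves a heat equation on $\Omega$ with source $g:=\rho f-(\Delta\rho)y-2\nabla\rho\cdot\nabla y$ and initial datum $\rho\mathcal{G}_0\in\mathcal{H}^{s+1}$ (via Lemma \ref{20241230-yb-lemma-InterEquivalenceBetweenTwoSobolevSpaces}), and then invoke the global parabolic estimate Lemma \ref{20241223-yb-lemma-SolutionsWithNonhomogeneousTermInFunctionalSetting} to conclude $z\in H^1((0,\delta);\mathcal{H}^s)\cap L^2((0,\delta);\mathcal{H}^{s+2})$.

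The main obstacle is the commutator $-(\Delta\rho)y-2\nabla\rho\cdot\nabla y$ inside $g$: its $L^2((0,\delta);\mathcal{H}^s)$-regularity presupposes spatial regularity of $y$ that is part of the conclusion. I would resolve this exactly as in Step 2 of the proof of Proposition \ref{20231210-yb-proposition-ReverseRegularityForSolutionsOFHeatEquation}, by a finite induction climbing the spatial index one unit at a time: starting from the a priori rough bound $y\in L^2((0,\delta);H^{s-m_0}_{loc})$ afforded by $y\in C([0,T);\mathcal{D}'(U))$, each application of Lemma \ref{20241223-yb-lemma-SolutionsWithNonhomogeneousTermInFunctionalSetting} on a slightly smaller neighborhood improves the regularity of $z$, hence of the commutator at the next stage, by one order, until the target $s$ is reached. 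Throughout, the interior statement of Proposition \ref{2021218-yb-lemma-NonhomogenuousHeat} already guarantees smoothness for $t>0$, so that only the quantitative behaviour as $t\to0^+$, governed by the sharp initial datum $\mathcal{G}_0\in H^{s+1}_{loc}(x_0)$ and the higher data $\mathcal{G}_1,\dots,\mathcal{G}_r$ through the induction on $r$, is genuinely at stake.
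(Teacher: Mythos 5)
Your proposal is correct, but it reorganizes the argument around different tools than the paper in the two places that matter. For $(i)\Rightarrow(ii)$ the paper does not invoke the parabolic trace theorem: it constructs an auxiliary solution $y_1$ of the heat equation on $\mathbb{R}^n$ with source the zero extension of $f$ and zero initial datum, shows $y_1\in H^{1,s}_{loc}\cap H^{0,s+2}_{loc}$ by Proposition \ref{2021218-yb-lemma-NonhomogenuousHeat}, and then applies Proposition \ref{20231210-yb-proposition-ReverseRegularityForSolutionsOFHeatEquation} (with $\alpha=\beta=0$) to the homogeneous difference $y-y_1$ to read off $y|_{t=0}\in H^{s+1}_{loc}$; your interpolation embedding $H^1((0,\delta);H^s)\cap L^2((0,\delta);H^{s+2})\hookrightarrow C([0,\delta];H^{s+1})$ applied to a cut-off of $\partial_t^k y$ reaches the same trace more directly, and is consistent with what the paper itself uses in Lemma \ref{20241223-yb-lemma-SolutionsWithNonhomogeneousTermInFunctionalSetting}, so this is a legitimate shortcut. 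For $(ii)\Rightarrow(i)$ in the base case $r=0$ the paper uses the same two-solution decomposition in reverse --- the source is absorbed into an auxiliary problem posed on all of $\mathbb{R}^n$ (so no cut-off commutator ever appears), and the rough initial datum $y|_{t=0}\in H^{s+1}_{loc}$ is handled by the homogeneous difference via Proposition \ref{20231210-yb-proposition-ReverseRegularityForSolutionsOFHeatEquation} --- whereas you cut off first and therefore must bootstrap the commutator $-(\Delta\rho)y-2\nabla\rho\cdot\nabla y$. Your bootstrap does close: starting from $y\in L^2((0,\delta);H^{s-m_0}_{loc})$, each pass through Lemma \ref{20241223-yb-lemma-SolutionsWithNonhomogeneousTermInFunctionalSetting} (for the source part) and Lemma \ref{20231210-yb-lemma-IdentityForWeightedHeatSolutions} (for the datum $\rho\,\mathcal{G}_0\in\mathcal{H}^{s+1}$, which caps the gain at $\mathcal{H}^{s+2}$) raises the spatial index by one on a slightly smaller neighborhood, and finitely many passes reach the target; but this costs you an extra induction that the paper's decomposition avoids entirely. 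The reduction from general $r$ to $r=0$ (your induction on $r$ applied to $Y=\partial_t y$, with the shifted compatibility sequence $\mathcal{G}^Y_k=\mathcal{G}_{k+1}$) is essentially the paper's Step 2 in different clothing. In short: both directions of your argument are sound; what the paper's route buys is that every delicate step is delegated to Propositions \ref{2021218-yb-lemma-NonhomogenuousHeat} and \ref{20231210-yb-proposition-ReverseRegularityForSolutionsOFHeatEquation} already proven, while what yours buys is a quicker trace argument at the price of a heavier localization in the converse direction.
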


\begin{proof}
We arbitrarily fix $r,s\in \mathbb N$ and $x_0 \in U$.
The proof is divided into the following two steps.

\vskip 5pt
\noindent\textit{Step 1. We prove that $(i) \Leftrightarrow (ii)$
for the case $r=0$.}

We first show that $(i)\Rightarrow(ii)$ for this case.
Suppose that $(i)$ with $r=0$ holds. Then
by \eqref{yu-10-15-3}, there is  $\varepsilon_1 \in (0,T)$ and an open set $U_1$, with $x_0\in U_1\subset U$, such that
\begin{align}\label{20241222-yb-RegularityOfSolutionAroundInitialTime-InCase-r0}
    y \in   H^{1}((0,\varepsilon_1); H^{s}(U_1) )
    \cap L^2((0,\varepsilon_1); H^{s+2}(U_1)),
\end{align}
which yields $\partial_t y,  \Delta y \in  L^2((0,\varepsilon_1); H^{s}(U_1))$.
  Thus, we have
\begin{align}\label{20241222-yb-RegularityOfNonhomogeneousTermAroundInitialTime-InCase-r0}
    f = \partial_t y - \Delta y \in L^2((0,\varepsilon_1); H^s(U_1) )
    = H^{0,s}((0,\varepsilon_1) \times U_1),
\end{align}
which gives the first statement in \eqref{20241220-yb-CompatibilityOfRegularitiesAtInitialTime} with $r=0$.

Now, we claim that there is a function $y_1$ such that
\begin{align}\label{20241222-yb-RegularityOfConstructedHeatEquationWithSameNonhomogeneousTerm}
    y_1  \in H^{1}((0,\varepsilon_1); H^{s}_{loc}(U_1) )
    \cap L^2((0,\varepsilon_1); H^{s+2}_{loc}(U_1)),
\end{align}
and
\begin{align}\label{20241222-yb-ConstructedHeatEquationWithSameNonhomogeneousTerm}
    \partial_t y_1   -  \Delta y_1  =  f
     ~\text{in}~   (0,\varepsilon_1) \times U_1;
     ~~ y_1|_{t=0} = 0  ~\text{in}~    U_1.
\end{align}
To this end, we write $\tilde f_{\varepsilon_1, U_1}$ for the zero extension of $f|_{(0,\varepsilon_1) \times U_1}$ over $\mathbb R \times \mathbb R^n$. Then it follows from \eqref{20241222-yb-RegularityOfNonhomogeneousTermAroundInitialTime-InCase-r0} that
\begin{align}\label{20241222-yb-ExtentdedNonhomogeneousTerm}
   \tilde f_{\varepsilon_1, U_1}  \in   L^2(\mathbb R; H^s(U_1) \cap L^2(\mathbb R^n) ).
\end{align}
Let $z_1 \in C([0, +\infty); L^2(\mathbb R^n))$ be the solution to the following heat equation:
 \begin{align*}
    \partial_t z_1   -  \Delta z_1  =  \tilde{f}_{\varepsilon_1, U_1}
     ~\text{in}~   (0,+\infty) \times \mathbb R^n;
     ~~ z_1|_{t=0} = 0  ~\text{in}~    \mathbb R^n.
 \end{align*}
 Let $\tilde z_1$ be the zero extension of $z_1$ over $\mathbb R \times \mathbb R^n$. Then, from the above, one can easily check that
  \begin{align*}
     \partial_t \tilde z_1   -  \Delta \tilde z_1  =  \tilde{f}_{\varepsilon_1, U_1}
     ~\text{in}~   \mathbb R \times  \mathbb R^n\;\;\mbox{in the sense of distribution}.
 \end{align*}
Since $\tilde f_{\varepsilon_1, U_1} \in L^2(\mathbb R; H^s(U_1) )$ (see \eqref{20241222-yb-ExtentdedNonhomogeneousTerm}), we can
apply Proposition \ref{2021218-yb-lemma-NonhomogenuousHeat} (where $r=0$ and $(t_0,x_0)$ is any point in $\mathbb R \times U_1$) to obtain $\tilde z_1  \in H^{1,s}_{loc}(\mathbb R \times U_1)\cap H^{0,s+2}_{loc}(\mathbb R \times U_1)$.
 Then, the function $y_1 = \tilde z_1 |_{(0,\varepsilon_1) \times U_1}$
 satisfies  \eqref{20241222-yb-RegularityOfConstructedHeatEquationWithSameNonhomogeneousTerm} and  \eqref{20241222-yb-ConstructedHeatEquationWithSameNonhomogeneousTerm}. So, the above claim is true.

Now, we see from \eqref{20241218-yubiao-IntialTimeCaseNonhomogenuousHeatEquation} and  \eqref{20241222-yb-ConstructedHeatEquationWithSameNonhomogeneousTerm} that
  \begin{align}\label{6.12-1-4-w}
     \partial_t (y - y_1)   -  \Delta (y-y_1)  =  0
     ~\text{in}~   (0,\varepsilon_1) \times U_1;
     ~~ (y-y_1)|_{t=0} = y|_{t=0}   ~\text{in}~    U_1.
 \end{align}
 At the same time, it follows from \eqref{20241222-yb-RegularityOfSolutionAroundInitialTime-InCase-r0}
 and \eqref{20241222-yb-RegularityOfConstructedHeatEquationWithSameNonhomogeneousTerm} that
  \begin{align}\label{6.13-1-4-w}
       y - y_1 \in   L^2((0,\varepsilon_1); H^{s+2}_{loc}(U_1) ).
 \end{align}
By \eqref{6.12-1-4-w} and \eqref{6.13-1-4-w}, we can use  Proposition \ref{20231210-yb-proposition-ReverseRegularityForSolutionsOFHeatEquation} (where $(\alpha,\beta, T, U, y)$ is replaced by $(0,0, \varepsilon_1, U_1, y - y_1)$) to obtain
 \begin{align*}
     y|_{t=0}  =   (y-y_1)|_{t=0}
      \in H^{s+1}_{loc} (U_1).
 \end{align*}
Since $x_0 \in U_1$, the above, together with the definition of $\mathcal{G}_0$ in \eqref{20241220-yb-DefinitionsOfTermsInCompatibilityOfRegularitiesAtInitialTime}, implies
\begin{align*}
    \mathcal{G}_0 = y_0   ~\text{in}~  H^{s+1}_{loc}(x_0),
\end{align*}
 which, along with \eqref{20241222-yb-RegularityOfNonhomogeneousTermAroundInitialTime-InCase-r0},
 gives   \eqref{20241220-yb-CompatibilityOfRegularitiesAtInitialTime} for the case $r=0$.
 Hence, we have proven $(i)\Rightarrow(ii)$ for the case $r=0$.

 Next, we will show that $(ii)\Rightarrow(i)$ for the case $r=0$.
 For this purpose, we suppose that $(ii)$ holds for $r=0$.
 Then by  \eqref{yu-10-15-3}, there is  $\varepsilon_2 \in (0,T)$ and an open set $U_2$ with  $x_0\in U_2\subset U$ such that
\begin{align}\label{20241222-yb-RegularityOfSolutionAroundInitialTime-InSufficiencyInCase-r0}
    f \in L^2((0,\varepsilon_2); H^s(U_2) )
    ~\text{and}~
   y|_{t=0}   \in  H^{s+1} (U_2).
\end{align}
 By the similar arguments as  those used in the proof of \eqref{20241222-yb-RegularityOfConstructedHeatEquationWithSameNonhomogeneousTerm}, one can
 obtain  a function $y_2$ such that
\begin{align}\label{20241222-yb-Repreated-RegularityOfConstructedHeatEquationWithSameNonhomogeneousTerm}
    y_2  \in H^{1}((0,\varepsilon_2); H^{s}_{loc}(U_2) )
    \cap L^2((0,\varepsilon_2); H^{s+2}_{loc}(U_2)),
\end{align}
and
\begin{align*}
    \partial_t y_2   -  \Delta y_2  =  f
     ~\text{in}~   (0,\varepsilon_2) \times U_2;
     ~~ y_2|_{t=0} = 0  ~\text{in}~    U_2.
\end{align*}
Then, it follows from \eqref{20241218-yubiao-IntialTimeCaseNonhomogenuousHeatEquation} and \eqref{20241222-yb-RegularityOfSolutionAroundInitialTime-InSufficiencyInCase-r0} that
  \begin{align*}
     \partial_t (y - y_2)   -  \Delta (y-y_2)  =  0
     ~\text{in}~   (0,\varepsilon_2) \times U_2;
     ~~ (y-y_2)|_{t=0} = y|_{t=0}   ~\text{in}~  H^{s+1} (U_2).
 \end{align*}
 Thus, we apply Proposition \ref{20231210-yb-proposition-ReverseRegularityForSolutionsOFHeatEquation} (where $(\alpha,\beta, T, U, y)$ is replaced by $(0,0, \varepsilon_2, U_2, y - y_2)$) to obtain
 \begin{align*}
      y-y_2        \in   L^2((0,\varepsilon_2); H^{s+2}_{loc}(U_2)).
 \end{align*}
Since $y - y_2$ satisfies the heat equation over $(0,\varepsilon_2) \times U_2$, the above implies
 \begin{align*}
      y-y_2        \in  H^{1}((0,\varepsilon_2); H^{s}_{loc}(U_2) )
    \cap L^2((0,\varepsilon_2); H^{s+2}_{loc}(U_2)).
 \end{align*}
 Since $x_0 \in U_2$, the above, together with \eqref{20241222-yb-Repreated-RegularityOfConstructedHeatEquationWithSameNonhomogeneousTerm} and \eqref{yu-10-15-3}, yields
\begin{align*}
    y = (y-y_2) + y_2 \in H^{1,s}_{loc,+}(0,x_0) \cap H^{0,s+2}_{loc,+}(0,x_0).
\end{align*}
Hence, we have proven $(ii)\Rightarrow(i)$, and further
$(i) \Leftrightarrow (ii)$
for the case $r=0$.

\vskip 5pt
\noindent\textit{Step 2. We prove $(i) \Leftrightarrow (ii)$ for
the case $r\geq 1$.}

First, we will show  $(i)\Rightarrow(ii)$ for the case $r\geq 1$. To this end, we suppose that $(i)$ holds for $r\geq 1$.
Then by \eqref{yu-10-15-3}, there is  $\varepsilon_3 \in (0,T)$ and an open set $U_3$ with $x_0\in U_3\subset U$ such that
\begin{align*}
    y \in H^{r+1}((0,\varepsilon_3); H^{s}(U_3) )
    \cap H^{r}((0,\varepsilon_3); H^{s+2}(U_3) ) .
\end{align*}
We arbitrarily fix $r\geq 1$ and an
 integer $k\in [0,r]$. Then the above yields
\begin{align}\label{6.16-1-4-w}
    \partial_t^k y \in  H^{1}((0,\varepsilon_3); H^{s}(U_3) )  \cap  L^2((0,\varepsilon_3); H^{s+2}(U_3) ).
\end{align}
By \eqref{6.16-1-4-w}, we have
\begin{align}\label{6.17-1-4-w}
    \partial_t^k y \in  C([0, \varepsilon_3); H^{s}(U_3)).
\end{align}
At the same time, it follows from \eqref{20241218-yubiao-IntialTimeCaseNonhomogenuousHeatEquation} that
\begin{align}\label{6.18-1-4-w}
\partial_t (\partial_t^k y)  - \Delta (\partial_t^k y) = \partial_t^k f
~\text{in}~ (0, \varepsilon_3) \times U_3.
\end{align}
With \eqref{6.16-1-4-w}, \eqref{6.17-1-4-w} and \eqref{6.18-1-4-w},
we can use the conclusion in Step 1, where $r=0$ and
 $(T, U, y, f)$ is replaced by $( \varepsilon_3, U_3, \partial_t^k y, \partial_t^k f)$,
to obtain
\begin{align}\label{20241222-yb-HighOrderRegularityInNecessityInGeneral-r}
    \partial_t^k f \in H^{0, s}_{loc,+}(0,x_0)
    ~\text{and}~
    \partial_t^k y |_{t=0} \in H^{s+1}_{loc}(x_0).
\end{align}
Meanwhile, direct computations show
$\partial_t^k y |_{t=0}  =  \mathcal{G}_k$  near  $x_0$,
which, together with \eqref{20241222-yb-HighOrderRegularityInNecessityInGeneral-r}, leads to \eqref{20241220-yb-CompatibilityOfRegularitiesAtInitialTime} with the above $r$.
This gives $(i)\Rightarrow(ii)$ for the case $r\geq 1$.

Next, we will
prove $(ii)\Rightarrow(i)$ for the case $r\geq 1$.
To this end, we suppose that $(ii)$ with $r\geq 1$ is true. Then by \eqref{yu-10-15-3}, there is $\varepsilon_4 \in (0,T)$ and an open set $U_4$ with $x_0\in U_4\subset U$ such that
\begin{align}\label{20241222-yb-RegularityOfSolutionAroundInitialTime-InSufficiencyInGeneral-r}
    f \in H^{r}( (0,\varepsilon_4); H^{s}(U_4) )
    ~\text{and}~
   \mathcal{G}_k  \in  H^{s+1} (U_4)
     ~\text{when}~ k \in [0,r].
\end{align}
Since  $r\geq 1$,  the above  implies that $f \in C^{r-1}( [0,\varepsilon_4); H^{s}(U_4) )$.
We arbitrarily fix  an integer $k\in [0,r]$.
Then, by making use of  \eqref{20241218-yubiao-IntialTimeCaseNonhomogenuousHeatEquation} iteratively, we see that
\begin{align*}
    \partial_t^k y  \in  C( [0,\varepsilon_4); \mathcal{D}'(U_4) ),
\end{align*}
and further obtain that
\begin{align*}
\partial_t (\partial_t^k y)  - \Delta (\partial_t^k y) = \partial_t^k f
~\text{in}~ (0, \varepsilon_4) \times U_4;
~~
(\partial_t^k y) |_{t=0}  =  \mathcal{G}_k   ~\text{in}~   U_4.
\end{align*}
By this and \eqref{20241222-yb-RegularityOfSolutionAroundInitialTime-InSufficiencyInGeneral-r},
we can use the conclusion in  Step 1,  where $r=0$ and $(T, U, y, f)$ is replaced by $ (\varepsilon_4, U_4, \partial_t^k y, \partial_t^k f)$,
to obtain
\begin{align*}
    \partial_t^k y \in  H^{1,s}_{loc,+}(0,x_0) \cap H^{0,s+2}_{loc,+}(0,x_0).
\end{align*}
This implies
\begin{align*}
    y \in  H^{r+1,s}_{loc,+}(0,x_0) \cap H^{r,s+2}_{loc,+}(0,x_0).
\end{align*}
Hence, we have proven $(ii)\Rightarrow(i)$, and further obtain $(i) \Leftrightarrow (ii)$ for the case $r\geq 1$.

\vskip 5pt
In summary, we complete the proof.
\end{proof}

Next, we give the proof of
Theorem \ref{20240406-yubiao-Theorem-SingularitiesOfHistoricalValueAtEndpoints}.

\begin{proof}[Proof of Theorem \ref{20240406-yubiao-Theorem-SingularitiesOfHistoricalValueAtEndpoints}]

We arbitrarily fix $j\in  \mathbb N^+$. We simply write $y(\cdot,\cdot)$
for the solution $y(\cdot,\cdot; y_0, \phi)$.
The proof will be organized into several steps.

\vskip 5pt
\noindent\textit{Step 1. We  prove
\begin{align*}
    y   \in  \widehat{H}^{(r,s),2j}_{loc}(-\tau + j \tau,x_0)
    ~\Longleftrightarrow~
    y   \in  H^{r+1, s}_{loc}(0,x_0)   \cap H^{r, s+2}_{loc}(0,x_0).
\end{align*}
}

This is a consequence of Lemma \ref{20241219-yb-proposition-EquivalenceBetweenTwoInstants} (where  $t_0=-\tau$) and the definition of $\widehat{H}^{(r,s),2}_{loc}(0,x_0)$ in \eqref{20241218-yb-JointLocalSobolevSpace}.

\vskip 5pt
\noindent\textit{Step 2. We prove  that $y\in H^{r+1, s}_{loc}(0,x_0)   \cap H^{r, s+2}_{loc}(0,x_0)$ if and only if for each $k \in \mathbb N \cap [0,r]$,
\begin{align}\label{20241222-yb-DecomposedInformationOfSolutionAtInitialTime}
    y \in H^{r+1, s}_{loc,\pm}(0,x_0)   \cap  H^{r, s+2}_{loc,\pm}(0,x_0);\;\;
    \lim_{t\rightarrow 0^-} \partial_t^k y(t,\cdot) = \lim_{t\rightarrow 0^+} \partial_t^k y(t,\cdot)
    ~\text{near}~ x_0.
\end{align}
Here,
$H^{r+1, s}_{loc,\pm}(0,x_0) := H^{r+1, s}_{loc,+}(0,x_0)\cap  H^{r+1, s}_{loc, -}(0,x_0)$;
$H^{r, s+2}_{loc,\pm}(0,x_0) := H^{r, s+2}_{loc,+}(0,x_0)\cap  H^{r, s+2}_{loc, -}(0,x_0)$.
}

With the aid of \eqref{yu-10-15-3} and \eqref{yu-10-15-2}, we can directly
check \eqref{20241222-yb-DecomposedInformationOfSolutionAtInitialTime}.

\vskip 5pt
\noindent\textit{Step 3. We show that $y\in H^{r+1,s}_{loc,+}(0,x_0) \cap H^{r,s+2}_{loc,+}(0,x_0)$ if and only if $\phi \in H^{r,s}_{loc,+}(-\tau,x_0)$ and   condition $(2)$ of $(ii)$ in  Theorem \ref{20240406-yubiao-Theorem-SingularitiesOfHistoricalValueAtEndpoints}
holds.
}

By equation \eqref{20220510-yb-OriginalSystemWithTimeDelay} , we have
\begin{align*}
    \partial_t y  - \Delta y =  a \phi(\cdot - \tau, \cdot)
    ~\text{in}~
    (0, \tau) \times \Omega.
\end{align*}
Then, we can apply Lemma \ref{2021220-yb-lemma-IntialTimeCaseForNonhomogenuousHeat} (where $(U,T,f)$ is replaced by $(\Omega, \tau, a \phi(\cdot - \tau, \cdot) )$) to get
the conclusion in Step 3.


\vskip 5pt
\noindent\textit{Step 4. We prove  that if condition $(1)$ in
$(ii)$ is true, then
\begin{align*}
   \lim_{t\rightarrow 0^-}     \partial^k_t y(t,\cdot)
   = \partial^k_t\phi(0,\cdot)
        ~\text{and}~
   \lim_{t\rightarrow 0^+} \partial^k_t y(t,\cdot)
   = g_k  ~\mbox{near}~ x_0,
       ~\text{for each}~   k \in \mathbb N \cap [0,r].
\end{align*}
}

By equation \eqref{20220510-yb-OriginalSystemWithTimeDelay}, we have
\begin{align*}
    y|_{(-\tau,0) \times \Omega}   =  \phi
    ~\text{and}~
    \partial_t y  - \Delta y =  a \phi(\cdot - \tau)
    ~\text{in}~
    (0, \tau) \times \Omega.
\end{align*}
By these and the definitions of $\{g_k\}_{k=0}^r$ in \eqref{20241222-yb-DefinitionsOf-gk},
after some computations, we can obtain the conclusions in Step 4.

\vskip 5pt
\noindent\textit{Step 5. We prove $(i)\Rightarrow(ii)$.
}

We assume that $(i)$ holds. To prove the statement $(ii)$, by contradiction, we suppose that it was not true, i.e., all the conditions $(1)$, $(2)$ and $(3)$, presented in the statement $(ii)$, are true.

Several facts are stated as follows: First, since $y|_{(-\tau,0) \times \Omega}   =  \phi$, we deduce from condition
$(1)$ that
\begin{align}\label{6.22-1-5-wgs}
    y \in H^{r+1, s}_{loc,-}(0,x_0)   \cap  H^{r, s+2}_{loc,-}(0,x_0).
\end{align}
Second, by conditions $(1)$ and $(2)$, we get  from the conclusion in Step 3 that
\begin{align}\label{6.23-1-5-wgs}
    y \in H^{r+1, s}_{loc,+}(0,x_0)   \cap  H^{r, s+2}_{loc,+}(0,x_0).
\end{align}
Third, by the conclusion in Step 4 and condition $(3)$, we have
\begin{align}\label{6.24-1-5-wgs}
 \lim_{t\rightarrow 0^-} \partial_t^k y(t,\cdot) = \lim_{t\rightarrow 0^+} \partial_t^k y(t,\cdot)
    ~\text{near}~ x_0,
    ~\text{for each}~    k \in \mathbb N \cap [0,r].
   \end{align}

Now, by \eqref{6.22-1-5-wgs}, \eqref{6.23-1-5-wgs}  and \eqref{6.24-1-5-wgs}, we can use the conclusion in  Step 2 to see
\begin{align*}
    y \in H^{r+1, s}_{loc}(0,x_0)   \cap  H^{r, s+2}_{loc}(0,x_0).
\end{align*}
By the above, we can use the conclusion in Step 1 to find
\begin{align*}
    y   \in  \widehat{H}^{(r,s),2j}_{loc}(-\tau + j \tau,x_0),
\end{align*}
which contradicts the statement $(i)$. Therefore, the statement $(ii)$ is true.

\vskip 5pt
\noindent\textit{Step 6. We prove $(ii)\Rightarrow(i)$.
}

Assume that the statement $(ii)$ is true. That is, at least one of conditions $(1)$, $(2)$, and $(3)$ is not true.
To prove  the statement $(i)$, by contradiction, we suppose that it was not true, i.e.,
    $y \in \widehat{H}^{(r,s),2j}_{loc}( -\tau + j\tau, x_0)$.
This, along with the conclusion in  Step 1, yields
\begin{align*}
    y \in H^{r+1, s}_{loc}(0,x_0)   \cap H^{r, s+2}_{loc}(0,x_0).
\end{align*}
With the conclusion of Step 2, the above leads to \eqref{20241222-yb-DecomposedInformationOfSolutionAtInitialTime}.
Then, by the conclusion in  Step 3 and the first conclusion in \eqref{20241222-yb-DecomposedInformationOfSolutionAtInitialTime}, we obtain condition $(2)$ in $(ii)$ and
$\phi \in H^{r,s}_{loc,+}(-\tau,x_0)$.
Since $y|_{(-\tau,0) \times \Omega}   =  \phi$, the above, together with the first conclusion in \eqref{20241222-yb-DecomposedInformationOfSolutionAtInitialTime}, implies condition $(1)$ in $(ii)$. Finally, condition $(3)$
in $(ii)$ follows from the conclusion in Step 4 and
the second conclusion in \eqref{20241222-yb-DecomposedInformationOfSolutionAtInitialTime}. Therefore, we conclude that all the conditions $(1)$, $(2)$, and $(3)$ are true, which contradicts the statement $(ii)$.  So, the statement $(i)$ is true.

\vskip 5pt
Hence, we complete the proof of Theorem \ref{20240406-yubiao-Theorem-SingularitiesOfHistoricalValueAtEndpoints}.
\end{proof}

\section{Numerical simulations}\label{section-NumericalSimulations}

In this section, some numerical experiments are presented for
equation \eqref{20220510-yb-OriginalSystemWithTimeDelay} (with the space dimension $n=1$) to demonstrate the propagation of singularities in Theorem \ref{20230510-yb-theorem-PropagationOfSingularitiesOnlyWithInitialdata}.

Let $\Omega=(0,1)$, $\tau=1$  and $a=1$ in equation \eqref{20220510-yb-OriginalSystemWithTimeDelay}. We consider
\begin{align*}
    y'(t)  - \Delta_D y(t) - y(t-1) =0, ~t>0;
    ~~  y(0)= \delta_{0.3},
    ~ y|_{(-1, 0)} \equiv 0,
\end{align*}
where $\delta_{0.3}$ is the Dirac measure at $x=0.3$. Let
\begin{align*}
   j_t := [t],~ t\geq 0,
\end{align*}
(where $[t]$ denotes the largest number that is not larger than $t$).
By Theorem \ref{20230510-yb-proposition-ExpressionForEvolutionWithTimeDelay}, (v) of Remark \ref{20250118-yb-remark-ExpressionOfSolutions} and the spectral method, we obtain  that in the space $C([0,+\infty); \mathcal{H}^{-1})$,
\begin{align*}
    y(t,x; \delta_{0.3}, 0)
    = \sum_{k=1}^{+\infty}
    \sum_{j=0}^{j_t}
        \frac{ (t-j\tau)^j }{ j! }
          e^{-\lambda_k (t-j\tau)}
    e_k(0.3) e_k(x),
    ~t \geq 0, ~ x \in (0,1),
\end{align*}
with the convention that  $0^0:=1$. Here,  $\{\lambda_k\}_{k\geq 1}$ and $\{e_k\}_{j\geq 1}$ are the eigenvalues and the corresponding eigenfunctions (normalized in $L^2(0,1)$) of the operator $-\Delta_D$, respectively.

One can easily check that the right limit of $\partial_t^{j_t}  y(t,\cdot; \delta_{0.3}, 0)$ exists in $\mathcal{H}^{-1}$ at each time $t\geq0$. We denote this limit by $\partial_t^{j_t}  y(t+0,\cdot; \delta_{0.3}, 0)$.
Then we can directly verify that
\begin{align}\label{20250105-yb-ExpressionOfTimeDerivativesOfSolutions}
    \partial_t^{j_t}  y(t+0,\cdot; \delta_{0.3}, 0)
= \sum_{k=1}^{+\infty}
    \sum_{j=0}^{j_t}
        \sum_{l=0}^{ j }
        C_{j_t}^l
        \frac{ (t-j\tau)^{j-l} }{ (j-l)! }
        (-\lambda_k)^{j_t - l}
          e^{-\lambda_k (t-j\tau)}
    e_k(0.3) e_k(\cdot),
    ~ t \geq 0.
\end{align}

Next, we take the first $60$ frequency components in \eqref{20250105-yb-ExpressionOfTimeDerivativesOfSolutions}, discretize $\Omega=(0,1)$ with the mesh size $1/300$, and then draw the function $\partial_t^{j_t}  y(t+0,\cdot; \delta_{0.3}, 0)$ at different time instants in Figure  \ref{202501-yb-SolutionsWithWaves}.
In each subfigure, the blue curve represents the function $\partial_t^{j_t}  y(t+0,\cdot; \delta_{0.3}, 0)$ at the time instant specified there.
\begin{figure}[!htb]
\label{202501-yb-SolutionsWithWaves}
\centering
\includegraphics[height=10cm,width=15cm]{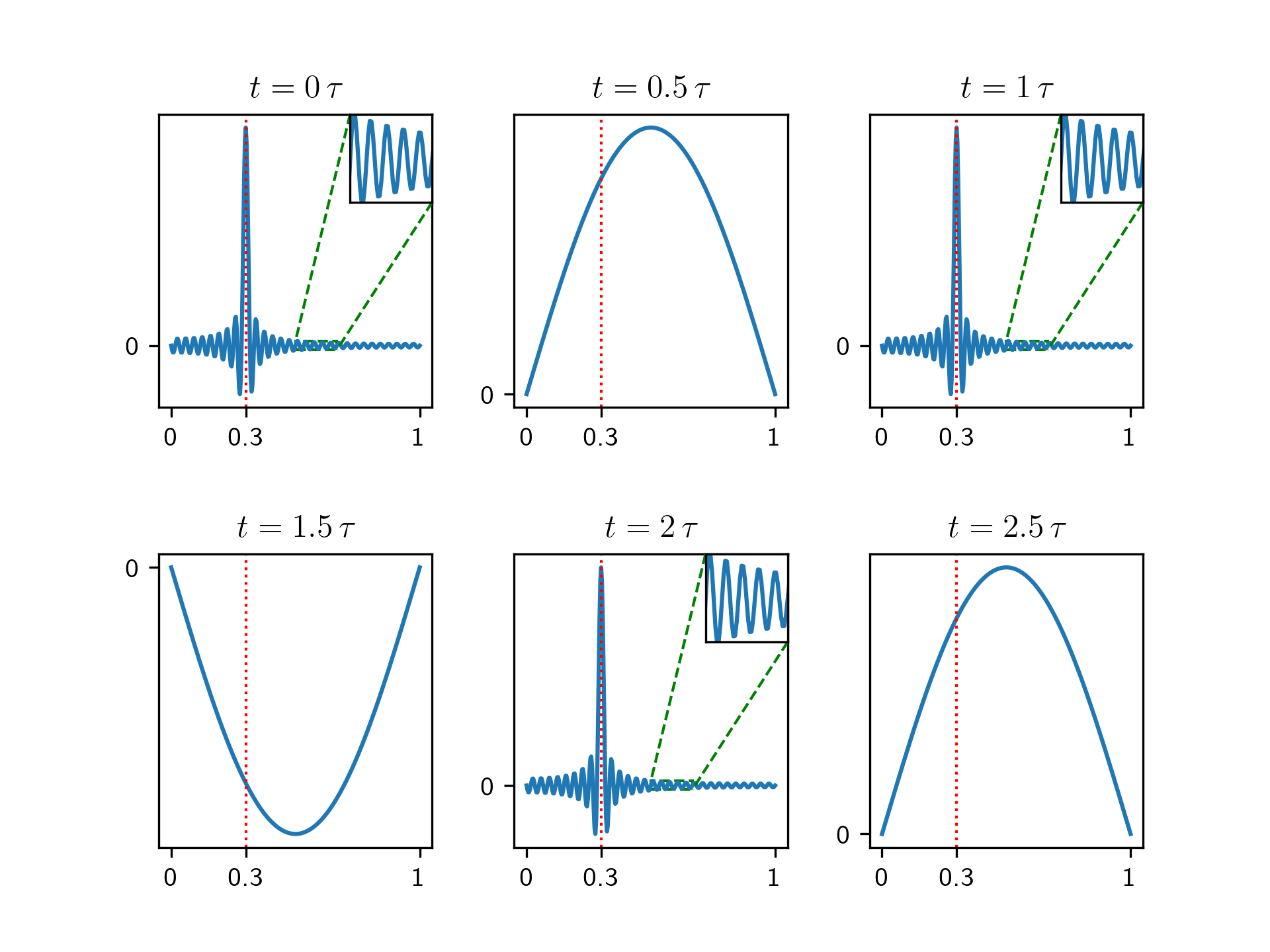}
\caption{Images of $\partial_t^{j_t}  y(t+0,\cdot; \delta_{0.3}, 0)$ (drawn in blue) at six time instants.}
\end{figure}

From these subfigures we see that: $(i)$ at the time instants $t=\tau, 2\tau$, the function $\partial_t^{j_t}  y(t+0,\cdot; \delta_{0.3}, 0)$ reproduces the singularity of the initial datum $\delta_{0.3}$ at the same position ($x=0.3$); $(ii)$ at the other three positive time instants, the function $\partial_t^{j_t}  y(t+0,\cdot; \delta_{0.3}, 0)$ has no singularities. These confirm the propagation of singularities along the time direction for equation \eqref{20220510-yb-OriginalSystemWithTimeDelay} (introduced in Theorem \ref{20230510-yb-theorem-PropagationOfSingularitiesOnlyWithInitialdata}).

\section{Appendices}\label{section-yu-section-2-22}



\subsection{Proof of Lemma \ref{20230524-yb-lemma-PropagationOfSmoothnessForHeatSolutions}}

\begin{proof}[Proof of Lemma \ref{20230524-yb-lemma-PropagationOfSmoothnessForHeatSolutions}]
We arbitrarily fix $x_0 \in U$ and then take an open ball $B_r(x_0)$, centered at $x_0$ and of radius $r>0$, such that $B_{2r}(x_0) \Subset U$.
We take a cut-off function $\rho \in C_0^{\infty}(B_{2r}(x_0))$ with $\rho \equiv 1$ over $B_{r}(x_0)$. Then it is clear that
\begin{align*}
    \rho y_0  \in C_0^{\infty}(B_{2r}(x_0))
    ~\text{and}~
    \rho f  \in C^{\infty}\big( [0,T); C^{\infty}_0(B_{2r}(x_0)) \big).
\end{align*}
By \cite[Theorem 5, Page 382]{Evans-2010-AMS}, there is a function $\hat y \in  C^\infty([0,T) \times B_{2r}(x_0))$
such that
\begin{align*}
\left\{
    \begin{array}{ll}
     \partial_t \hat y  -  \Delta  \hat  y  =  \rho f
     &~\text{in}~
     (0, T) \times B_{2r}(x_0),
     \\
     \hat y  = 0  &~\text{on}~
     (0, T) \times \partial B_{2r}(x_0),
     \\
     \hat y|_{t=0}  = \rho y_0  &~\text{in}~  B_{2r}(x_0).
    \end{array}
\right.
\end{align*}
Since $\rho \equiv 1$ over $B_{r}(x_0)$,  with \eqref{20241220-yb-InitialDataAndNonhomogeneousTerm}, the above equality implies
\begin{align*}
    \partial_t (y - \hat y)   - \Delta (y - \hat y) = 0
    ~\text{over}~
    (0, T) \times B_{r}(x_0);
    ~~
     (y - \hat y)|_{t=0}  = 0   ~\text{in}~  B_{r}(x_0).
\end{align*}
We write $\tilde{y}$ for the zero extension of $y-\hat y$ over $(-\infty, T) \times B_{r}(x_0)$. Direct  computations yield
\begin{align*}
    \partial_t \tilde{y}   - \Delta \tilde{y} = 0
    ~\text{over}~
    (-\infty, T) \times B_{r}(x_0).
\end{align*}
At the same time, since the heat operator $\partial_t  - \Delta $ is hypoelliptic (see \cite[Definition 11.1.2 and (iii) and (iv) of Theorem 11.1.1]{Hormander-2}),  we find from the last equality that
    $\tilde{y}   \in  C^{\infty} ( (-\infty, T) \times B_{r}(x_0))$.
In particular,
\begin{align*}
    (y - \hat y)|_{ [0,T) \times B_{r}(x_0) }
= \tilde{y}|_{ [0,T) \times B_{r}(x_0) }
\in  C^{\infty} ( [0,T) \times B_{r}(x_0)).
\end{align*}
Note that $\hat y \in C^\infty([0,T) \times B_{2r}(x_0))$. We conclude
\begin{align*}
    y = \hat y  +  (y - \hat y)     \in   C^{\infty} ( [0,T) \times B_{r}(x_0)).
\end{align*}
Because $x_0\in U$ was arbitrarily taken, the desired conclusion follows immediately. This completes the proof of Lemma \ref{20230524-yb-lemma-PropagationOfSmoothnessForHeatSolutions}.
\end{proof}

\subsection{Proof of statement \eqref{20250203-DistributionEmbeddedIntoSobolevSpaces}}
\label{20250203-DistributionsAndSobolevSpaces}

The statement \eqref{20250203-DistributionEmbeddedIntoSobolevSpaces} tells that a distribution is locally in a Sobolev space. Its proof is presented below for sake of  the  completeness of this paper.

\begin{proof}[Proof of \eqref{20250203-DistributionEmbeddedIntoSobolevSpaces}]
Arbitrarily fix a cut-off function $\rho \in C_0^{\infty}(U_1)$. Note that $U_1 \Subset U$. It is clear that  $\rho y_0 $ is compactly supported in $U_1$.
By the definition of distributions (see for instance \cite[Definition 2.1.1, page 33]{Hormander-1}), there is a constant $C>0$ and a $k\in \mathbb N^+$ such that for each $\varphi \in C_0^\infty(U)$,
\begin{align*}
   | \langle \rho y_0, \varphi \rangle_{\mathcal{D}'(U), C_0^\infty(U)} |
\leq   C \| \varphi \|_{C^k(U_1)}.
\end{align*}
Then, by the Sobolev embedding theorem (see for instance \cite[Theorem 6, Section 5.6, Page 270]{Evans-2010-AMS}), there is a $C_1>0$ such that for each $\varphi \in C_0^\infty(U)$,
\begin{align*}
   | \langle \rho y_0, \varphi \rangle_{\mathcal{D}'(U), C_0^\infty(U)} |
\leq   C_1 \| \varphi \|_{H^{k + n}(U)}.
\end{align*}
This implies that the zero extension of  $\rho y_0 $ over $\mathbb R^n$ is in the Sobolev space $H^{-(k+n)}(\mathbb R^n)$. By the arbitrariness of $\rho$ in $C_0^{\infty}(U_1)$, we conclude that $y_0 \in H^{-(k+n)}_{loc}(U_1)$.  This ends the proof.
\end{proof}

\subsection{Two lemmas on regularity}

\begin{lemma}\label{20241230-yb-lemma-InterEquivalenceBetweenTwoSobolevSpaces}
Let $\mathcal{E}'(\Omega)$ be the space of distributions compactly supported over $\Omega$. Then, for each $s\in \mathbb R$,
\begin{align}\label{20241229-yb-IdentificationOfTwoFunctionSpaces}
   H^s_{loc}(\Omega)  \cap \mathcal{E}'(\Omega)
   = \mathcal{H}^s \cap  \mathcal{E}'(\Omega).
\end{align}
\end{lemma}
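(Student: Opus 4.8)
The plan is to collapse the two-sided inclusion into a single equivalence and then prove that equivalence by settling the case of even nonnegative integer orders and propagating it to all real $s$ by interpolation and duality. First I would record a reduction that disposes of the ``$H^s_{loc}$ versus global'' distinction for compactly supported distributions. Let $f\in\mathcal{E}'(\Omega)$ with $\operatorname{supp}f\subset K\Subset\Omega$, and let $\widetilde f$ denote its zero extension to $\mathbb R^n$. Choosing $\rho\in C_0^\infty(\Omega)$ with $\rho\equiv 1$ near $K$ gives $\rho f=f$, so $f\in H^s_{loc}(\Omega)$ forces $\widetilde f\in H^s(\mathbb R^n)$; conversely, if $\widetilde f\in H^s(\mathbb R^n)$ then for every $\rho'\in C_0^\infty(\Omega)$ one has $\widetilde{\rho' f}=\rho'\widetilde f\in H^s(\mathbb R^n)$, since multiplication by a fixed $C_0^\infty$ function is bounded on $H^s(\mathbb R^n)$, whence $f\in H^s_{loc}(\Omega)$. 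Thus for compactly supported $f$ the lemma is equivalent to the single statement
$$ f\in\mathcal E'(\Omega)\ \Longrightarrow\ \big(\, f\in\mathcal H^s \iff \widetilde f\in H^s(\mathbb R^n)\,\big). $$

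The heart of the matter is that for any cutoff $\chi\in C_0^\infty(\Omega)$ the multiplication operator $M_\chi$ is bounded both $\mathcal H^s\to H^s(\mathbb R^n)$ and $H^s(\mathbb R^n)\to\mathcal H^s$ for every $s\in\mathbb R$; granting this and taking $\chi\equiv 1$ near $\operatorname{supp}f$ yields both implications at once, because then $\widetilde f=\widetilde{\chi f}=M_\chi f$ and $f=\chi\widetilde f=M_\chi\widetilde f$. I would establish these two bounds first for $s=2m$, $m\in\mathbb N$. For $M_\chi\colon H^{2m}(\mathbb R^n)\to\mathcal H^{2m}$: the function $\chi g$ is compactly supported in $\Omega$ and lies in $H^{2m}(\mathbb R^n)$, so $\Delta^k(\chi g)$ vanishes near $\partial\Omega$ for $k<m$, and hence the tower of Dirichlet compatibility conditions defining $\operatorname{Dom}((-\Delta_D)^m)=\mathcal H^{2m}$ is met trivially, giving $\chi g\in\mathcal H^{2m}$. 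For $M_\chi\colon\mathcal H^{2m}\to H^{2m}(\mathbb R^n)$: if $f\in\mathcal H^{2m}$ then $\Delta^m f\in L^2(\Omega)$ as a distribution on $\Omega$, so interior elliptic regularity (iterating \cite[Section 6.3]{Evans-2010-AMS}) gives $f\in H^{2m}_{loc}(\Omega)$ together with the interior estimate $\|f\|_{H^{2m}(\operatorname{supp}\chi)}\le C(\|\Delta^m f\|_{L^2(\Omega)}+\|f\|_{L^2(\Omega)})$, whence $\chi f\in H^{2m}(\mathbb R^n)$ with controlled norm.

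To reach all real $s$ I would interpolate and dualize. Both $\{H^s(\mathbb R^n)\}_s$ and the abstract scale $\{\mathcal H^s\}_s$ attached to the positive self-adjoint operator $-\Delta_D$ are complex interpolation scales (the latter being the fractional-power scale of $-\Delta_D$), so complex interpolation of the endpoint bounds at $s=0$ and $s=2m$ delivers both bounds for every $s\in[0,2m]$, hence for all $s\ge 0$. For $s<0$ I would pass to adjoints: with the $L^2$-pairings one has $(H^s(\mathbb R^n))'=H^{-s}(\mathbb R^n)$ and $(\mathcal H^s)'=\mathcal H^{-s}$, and $M_\chi$ is formally self-adjoint, so the adjoint of $M_\chi\colon\mathcal H^s\to H^s(\mathbb R^n)$ is $M_\chi\colon H^{-s}(\mathbb R^n)\to\mathcal H^{-s}$, and the adjoint of $M_\chi\colon H^s(\mathbb R^n)\to\mathcal H^s$ is $M_\chi\colon\mathcal H^{-s}\to H^{-s}(\mathbb R^n)$. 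This provides both bounds at all negative orders and completes the argument.

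I expect the genuinely substantive points to be twofold. First, the even-integer endpoint step: one must verify carefully that compact support really does discharge the entire tower of Dirichlet-type conditions encoded in $\operatorname{Dom}((-\Delta_D)^m)$, and that it is the \emph{interior} (rather than up-to-the-boundary) elliptic estimate that is in play. Second, one must confirm that $\{\mathcal H^s\}_s$ is a complex interpolation scale compatible with the $L^2$-duality, so that the interpolation and duality steps legitimately generate the intermediate and negative orders; once these are in place, the remainder is routine bookkeeping with cutoffs.
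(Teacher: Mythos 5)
Your proposal is correct and follows essentially the same route as the paper's proof: both settle the even-integer case $s=2m$ first (where compact support trivializes the Dirichlet compatibility conditions defining $\mathcal{H}^{2m}$ in one direction and interior elliptic regularity handles the other), then reach all $s\ge 0$ by interpolation and all $s<0$ by duality. The only cosmetic difference is that you phrase the key step as boundedness of the cutoff multiplication operator with explicit interior estimates, whereas the paper works with the fixed-support subspaces $H^s_K(\Omega)$ and $\mathcal{H}^s_K$ and deduces equivalence of the norms from completeness via the open mapping theorem.
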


\begin{proof}
We arbitrarily fix a compact non-empty subset $K\Subset \Omega$.
For each $s\in \mathbb R$, we define
\begin{align*}
    H^s_K(\Omega) :=    \big\{
        f\in H^s_{loc}(\Omega) ~:~  \text{supp}\, f \subset K
    \big\}
    ~\text{and}~
    \mathcal{H}^s_K := &  \big\{
        f\in \mathcal{H}^s ~:~  \text{supp}\, f \subset K
    \big\}.
\end{align*}
 To prove \eqref{20241229-yb-IdentificationOfTwoFunctionSpaces}, it suffices to show that
\begin{align}\label{20241230-yb-EquivalenceOfTwoFunctionSpaces}
    H^s_K(\Omega)  =  \mathcal{H}^s_K,
    ~\forall\,  s\in \mathbb R.
\end{align}
The rest of the proof is divided into the following two steps.

\vskip 5pt
\noindent\textit{
Step 1. We prove that for each $s \geq 0$, the identity operator is an isomorphism from $H^{s}_{K}(\Omega)$
to $\mathcal{H}^s_K$.
}

Note that when $s_2 \geq s_1 \geq 0$,
\begin{align*}
    H^{s_2}_{K}(\Omega) \subset H^{s_1}_{K}(\Omega)
    ~\text{and}~
    \mathcal{H}^{s_2}_K  \subset \mathcal{H}^{s_1}_K.
\end{align*}
By this and the interpolation theorem (see \cite[Theorem 5.1]{Lions-Magenes}), it suffices to prove that for each integer $m\in \mathbb N$, the identity operator  is an isomorphism from $H^{2m}_{K}(\Omega)$  to $\mathcal{H}^{2m}_K$.

Let $m \in \mathbb N$. We first show that
\begin{align}\label{20241230-yb-SubsetRelationBetweenTwoSobolevSpaces}
    H^{2m}_{K}(\Omega)  \subset \mathcal{H}^{2m}_K.
\end{align}
 To this end, we arbitrarily fix $f \in H^{2m}_{K}(\Omega)$. Then we have
\begin{align*}
  \text{supp}\, f \subset K
  ~\text{and}~
  \Delta_D^m f = \Delta^m f   \in  L^2(\Omega),
\end{align*}
where $\Delta$ is the Laplacian operator without any boundary condition. This implies $f\in \mathcal{H}^{2m}_K$, which leads to \eqref{20241230-yb-SubsetRelationBetweenTwoSobolevSpaces}.

Next,  we will prove
\begin{align}\label{20241230-yb-SupsetRelationBetweenTwoSobolevSpaces}
     \mathcal{H}^{2m}_K\subset H^{2m}_{K}(\Omega) .
\end{align}
Let $f\in \mathcal{H}^{2m}_K$. It is clear that
\begin{align*}
  \text{supp}\, f \subset K
  ~\text{and}~
  \Delta^m f  = \Delta_D^m f  \in  L^2(\Omega).
\end{align*}
This indicates that $f\in H^{2m}_K(\Omega)$, which yields \eqref{20241230-yb-SupsetRelationBetweenTwoSobolevSpaces}.

Finally, from \eqref{20241230-yb-SubsetRelationBetweenTwoSobolevSpaces} and \eqref{20241230-yb-SupsetRelationBetweenTwoSobolevSpaces}, we obtain
    $H^{2m}_{K}(\Omega)  = \mathcal{H}^{2m}_K$.
At the same time, the norms $\| \cdot \|_{H^{2m}(\Omega)}$ and $\|\cdot\|_{\mathcal{H}^{2m}}$ are complete in the spaces $H^{2m}_{K}(\Omega)$ and $\mathcal{H}^{2m}_K$, respectively. Hence, we conclude that these norms are equivalent and that the identity operator  is an isomorphism from $H^{2m}_{K}(\Omega)$  to $\mathcal{H}^{2m}_K$.

\vskip 5pt
\noindent\textit{
Step 2. We show \eqref{20241230-yb-EquivalenceOfTwoFunctionSpaces} for general $s\in \mathbb R$.
}

When $s \geq 0$, \eqref{20241230-yb-EquivalenceOfTwoFunctionSpaces} follows directly from Step 1.

When $s<0$, one can check that the spaces $H^{s}_{K}(\Omega)$ and $\mathcal{H}^{s}_K$ are dual spaces of the spaces $H^{-s}_{K}(\Omega)$ and $\mathcal{H}^{-s}_K$, respectively. At the same time, the latter two spaces are the same by Step 1 (with $s$ replaced by the positive number $-s$ here).
Then, the former two spaces $H^{s}_{K}(\Omega)$ and $\mathcal{H}^{s}_K$ coincide. Thus, \eqref{20241230-yb-EquivalenceOfTwoFunctionSpaces} is true for $s<0$. This ends the proof.
\end{proof}

The following result is classical, and the proof is provided here for the sake of completeness.

\begin{lemma}\label{20241223-yb-lemma-SolutionsWithNonhomogeneousTermInFunctionalSetting}
Let $T>0$ and $f\in L^2( (0,T); \mathcal H^s )$ with $s\in \mathbb R$. Then, the solution to the following equation
\begin{align*}
\left\{
    \begin{array}{ll}
        \partial_t y - \Delta y = f &\text{in}  ~ (0,T) \times \Omega,\\
        y = 0  &\text{on}  ~(0,T)  \times \partial\Omega,\\
        y|_{t=0}= 0  &\text{in}   ~\Omega
    \end{array}
\right.
\end{align*}
satisfies
$y \in H^1((0,T); \mathcal H^{s}) \cap L^2( (0,T); \mathcal H^{s+2})$. Furthermore, we have $ y \in C([0,T]; \mathcal{H}^{s+1})$.
\end{lemma}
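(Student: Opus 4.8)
The plan is to exploit the spectral representation of the mild solution in the eigenbasis $\{e_i\}_{i\in\mathbb N^+}$ of $-\Delta_D$ and reduce the whole statement to elementary scalar estimates on the convolution kernel $e^{-\lambda t}$. Writing $f(t)=\sum_{i\geq1} f_i(t)e_i$, the solution is given by Duhamel's formula $y(t)=\int_0^t e^{(t-\sigma)\Delta_D}f(\sigma)\,d\sigma$, so that its coefficients $y_i(t):=\langle y(t),e_i\rangle_{L^2(\Omega)}$ satisfy $y_i(t)=\int_0^t e^{-\lambda_i(t-\sigma)}f_i(\sigma)\,d\sigma$ and, by differentiation, the scalar ODE $y_i'(t)=f_i(t)-\lambda_i y_i(t)$ with $y_i(0)=0$. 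Each claimed regularity then amounts to summing a $\lambda_i^s$-weighted family of one-dimensional inequalities, so that no Galerkin scheme is needed and all $s\in\mathbb R$ are treated at once.

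First I would establish the two $L^2$-in-time bounds. Since the kernel $t\mapsto e^{-\lambda_i t}$ has $L^1(\mathbb R^+)$-norm $1/\lambda_i$, Young's inequality gives $\lambda_i^2\int_0^T|y_i|^2\,dt\leq\int_0^T|f_i|^2\,dt$; multiplying by $\lambda_i^s$ and summing over $i$ yields
\begin{align*}
\|y\|_{L^2((0,T);\mathcal H^{s+2})}^2
=\sum_{i\geq1}\lambda_i^s\Big(\lambda_i^2\int_0^T|y_i|^2\,dt\Big)
\leq\sum_{i\geq1}\lambda_i^s\int_0^T|f_i|^2\,dt
=\|f\|_{L^2((0,T);\mathcal H^s)}^2.
\end{align*}
Using the relation $y_i'=f_i-\lambda_i y_i$ together with the previous bound then controls $\int_0^T|y_i'|^2\,dt$ by $\int_0^T|f_i|^2\,dt$ up to an absolute constant, and the weighted sum gives $\partial_t y\in L^2((0,T);\mathcal H^s)$. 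Hence $y\in H^1((0,T);\mathcal H^s)\cap L^2((0,T);\mathcal H^{s+2})$; this is precisely the maximal parabolic regularity one would otherwise obtain by applying the isometry $(-\Delta_D)^{s/2}$ to reduce to the case $s=0$, but the spectral computation avoids that reduction.

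For the continuity statement I would first record the pointwise bound obtained from Cauchy--Schwarz, $|y_i(t)|^2\leq\big(\int_0^t e^{-2\lambda_i(t-\sigma)}\,d\sigma\big)\int_0^T|f_i|^2\,d\sigma\leq(2\lambda_i)^{-1}\int_0^T|f_i|^2\,d\sigma$, so that $\lambda_i|y_i(t)|^2\leq\tfrac12\int_0^T|f_i|^2\,d\sigma$ uniformly in $t$. Weighting by $\lambda_i^s$ and summing shows $\sup_{t\in[0,T]}\|y(t)\|_{\mathcal H^{s+1}}^2\leq\tfrac12\|f\|_{L^2((0,T);\mathcal H^s)}^2$, so in particular $y(t)\in\mathcal H^{s+1}$ for every $t$. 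To upgrade this to genuine $\mathcal H^{s+1}$-continuity I would use a standard $\varepsilon/3$ argument: each coefficient $y_i$ lies in $H^1(0,T)\subset C([0,T])$, so every truncation $P_N y:=\sum_{i\leq N}y_i e_i$ is continuous into $\mathcal H^{s+1}$, while the tail satisfies $\|y(t)-P_N y(t)\|_{\mathcal H^{s+1}}^2=\sum_{i>N}\lambda_i^{s+1}|y_i(t)|^2\leq\tfrac12\sum_{i>N}\lambda_i^s\int_0^T|f_i|^2\,d\sigma$, which tends to $0$ uniformly in $t$ as $N\to\infty$. Uniform approximation by continuous functions then yields $y\in C([0,T];\mathcal H^{s+1})$.

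The computations themselves are elementary, so the one delicate point — and the one I would be most careful about — is the continuity claim: the uniform-in-$t$ smallness of the spectral tail is exactly what promotes the pointwise membership $y(t)\in\mathcal H^{s+1}$ to norm-continuity, and this is where the precise exponent $s+1$ (the interpolation level halfway between $s$ and $s+2$) is essential. I would also note at the outset that $y$ given by the Duhamel formula is indeed the solution of the stated boundary–initial value problem, so that the coefficient ODE $y_i'=f_i-\lambda_i y_i$ and the term-by-term differentiation above are legitimate.
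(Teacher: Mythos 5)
Your proposal is correct and follows essentially the same route as the paper: both expand $f$ and $y$ in the eigenbasis of $-\Delta_D$, obtain the $L^2((0,T);\mathcal H^{s+2})$ bound from Young's convolution inequality applied to the kernel $\lambda_i e^{-\lambda_i t}$ (whose $L^1$-norm is at most $1$), and then read off $\partial_t y\in L^2((0,T);\mathcal H^s)$ from the equation itself. The only difference is cosmetic: for the final continuity claim the paper simply invokes the standard trace/interpolation fact that $y\in L^2((0,T);\mathcal H^{s+2})$ together with $\partial_t y\in L^2((0,T);\mathcal H^s)$ gives $y\in C([0,T];\mathcal H^{s+1})$, whereas you reprove that fact in this spectral setting via the uniform bound $\lambda_i|y_i(t)|^2\le\tfrac12\int_0^T|f_i|^2\,d\sigma$ and a tail-truncation argument, which is a perfectly valid self-contained substitute.
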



\begin{proof}
The proof is standard by the spectral method.
Recall that $\{\lambda_i\}_{i\geq 1}$ and $\{ e_i \}_{i\geq 1}$ are the eigenvalues and corresponding eigenfunctions of $-\Delta_D$, respectively. Let
\begin{align*}
    f(t) = \sum_{i=1}^{+\infty}f_i(t)e_i, ~t \in (0,T)
    ~\text{for some}~
    (\lambda_i^{s/2} f_i)_{i\geq 1} \in L^2((0,T); l^2).
\end{align*}
Then it follows that
\begin{equation}\label{yu-9-19-102}
    y(t,\cdot) = \sum_{i=1}^{+\infty} \left(
    \int_0^t  e^{-\lambda_i \sigma}  f_i (t-\sigma) d\sigma
    \right)  e_i(\cdot),   ~t\in[0,T].
\end{equation}
Let $g_1 * g_2 (t) := \int_0^t g_1(t)  g_2(t-s) ds$, $t\geq 0$. We obtain from \eqref{yu-9-19-102} that
\begin{align*}
    \| y \|_{ L^2( (0,T); \mathcal H^{s+2} ) }^2
=&  \sum_{i \geq 1}  \lambda_i^{s+2}  \int_0^T  \bigg(
        \int_0^t  e^{-\lambda_i \sigma}  f_i (t-\sigma) d\sigma
    \bigg)^2   dt
    \nonumber\\
=& \sum_{i \geq 1}    \big\|
            (\lambda_i e^{-\lambda_i \cdot})  *   (\lambda_i^{s/2}f_i)
    \big\|_{L^2((0,T))} ^2
    \nonumber\\
\leq&  \sum_{i \geq 1}  \Big(
        \| \lambda_i e^{-\lambda_i \cdot} \|_{L^1((0,T))}   \cdot
        \| \lambda_i^{s/2}f_i \|_{L^2((0,T))}
    \Big)^2
    \nonumber\\
\leq&  \sum_{i \geq 1}     \int_0^T     \lambda_i^{s}       |f_i(t)|^2  dt
= \| f\|_{ L^2( (0,T); \mathcal H^{s} ) }^2
    < + \infty.
\end{align*}
Thus, $y \in L^2( (0,T); \mathcal H^{s+2})$. Since $\partial_t y = \Delta y + f$ and $f \in L^2( (0,T); \mathcal H^{s})$, we then get
$y \in H^1((0,T); \mathcal H^{s})$.
At last, because $y \in L^2( (0,T); \mathcal H^{s+2})$ and $\partial_t y \in L^2( (0,T); \mathcal H^{s})$, we determine
$y \in C([0,T]; \mathcal{H}^{s+1})$.
This finishes the proof.
\end{proof}

\section{Acknowledgments}

The authors gratefully thank Prof. Karl Kunisch for his invaluable discussions and comments on this paper.

The authors were partially supported by the National Natural Science Foundation of China under grants 12171359 and 12371450. The author Y. Zhang was also funded by the Humboldt Research Fellowship for Experienced Researchers program from the Alexander von Humboldt Foundation.

%
%

\end{document}